\newtheorem{assumption}{Assumption}
\numberwithin{assumption}{section}
\numberwithin{equation}{section}
\newtheorem{rem}{Remark}
\journalname{  }
\begin{document}
	
	\title{ A  new      subspace minimization conjugate  gradient method   for unconstrained  minimization   }
\author{ Zexian Liu$ ^{1} $ \and Yan Ni$ ^{1, *} $    \and  Hongwei Liu$ ^{2} $   \and Wumei Sun$ ^{2} $  }
\institute{
Yan Ni(\ding{41}),  e-mail:  ny20222202@163.com;	Zexian Liu,   e-mail: liuzexian2008@163.com
	\at  School of Mathematics and Statistics, Guizhou University, Guiyang, 550025, China
	\and
 Hongwei Liu, e-mail:hwliu@mail.xidian.edu.cn; Wumei Sun, sunwumei1992@126.com 
	\at   School of Mathematics and Statistics, Xidian University, Xi'an, 710126,   China
}
\date{Received: date / Accepted: date}
	
	\maketitle
	
	\begin{abstract}Subspace minimization conjugate gradient (SMCG) methods, as the generalization of traditional  conjugate gradient  methods,
		have become a class of quite efficient iterative methods for unconstrained optimization  and
		have attracted extensive attention recently.
		Usually, the search directions of   SMCG methods  are generated by minimizing   approximate models with  the approximation matrix $ B_k $    of   the objective function at the current iterate over the subspace spanned by the current gradient $ g_k $ and the latest search direction. The  $  g_k^TB_kg_k $ must be estimated properly in the calculation of the  search directions, which is crucial to the   theoretical  properties and the  numerical performance of   SMCG methods.    It is   a great   challenge to estimate it properly. 
		 An alternative    solution for this problem  might be to  design a new subspace minimization conjugate gradient method independent of the parameter   $ \rho_k  \approx   g_k^TB_kg_k $. The projection technique has been used successfully to generate conjugate gradient   directions such as   Dai-Kou conjugate gradient direction  (SIAM J Optim	23(1), 296-320, 2013).   Motivated by the above two observations, in the paper  
		we present a new   subspace minimization conjugate gradient methods by   using a projection technique  	based on the  memoryless quasi-Newton method.  More specially, we project the search direction of the  memoryless quasi-Newton method into the  subspace spanned by the current gradient and the latest search direction and drive a new search direction,  which is proved to be    descent.   Remarkably, the proposed method without any line search   enjoys the finite termination property for two dimensional convex quadratic functions, which   is helpful for designing    algorithm.  An adaptive scaling factor  in the search direction is given based on the above   finite termination property. The proposed method does not need to determine the parameter $ \rho_k $    and  can be regarded as an extension of   Dai-Kou conjugate gradient method.  The global convergence  of the proposed method  for general nonlinear  functions   is analyzed under the standard assumptions. Numerical comparisons on the 147 test function from the CUTEst library indicate the proposed method is very promising.

	\end{abstract}
	\keywords{  Conjugate gradient   method  \and Subspace minimization     \and  memoryless quasi-Newton method   \and Two dimensional quadratic  termination \and Global convergence  }
	\subclass{90C06 \and 65K}
	\section{Introduction}
	\vspace {-0.2cm}
	
	We consider the following   unconstrained optimization problem:
	\begin{equation*}
		\mathop {\min }\limits_{x \in { \mathbb R^n}} f(x).
	\end{equation*}
	where $f$ is continuously differential and its gradient is denoted by $ g $. Due to the low memory  requirement, simple  form and nice  numerical effect, conjugate gradient methods are a class of efficient iterative methods for large scale unconstrained optimization. Conjugate gradient methods are of the following form
	\begin{equation}\label{eq:CGform}
		x_{k+1} =x_k + \alpha_k d_k,
	\end{equation}
	where $\alpha_k$ is the stepsize obtained by a line
	search and  $d_k$ is the search direction given by
	\begin{equation}\label{eq:CGdk}
		{d_k} = \left\{ {\begin{array}{*{20}{c}}
				{ - {g_k},\;\;\;\;\;\;\;\;\;\;\;\;\;\;\;\;\;\;\;\text{if}\;k = 0,}\\
				{ - {g_k} + {\beta _k}{d_{k - 1}},\;\;\;\;\text{if}\;k > 0.}
			\end{array}} \right.
		\end{equation}
		Here $\beta_k$ is often called conjugate   parameter.  In the case
		that $ f $ is a convex quadratic function and the exact line search is performed,
		$ \beta_k $ should be the same. For nonlinear functions, however, different    $ \beta_k $ result in different conjugate gradient methods and their properties can be significantly different.  Some  well-known formulae for $ \beta_k $ are called the
		Fletcher-Reeves (FR) \cite{Fletcher1964Function}, Hestenes-Stiefel (HS)   \cite{Hestenes1952Methods}, Polak-Ribi\`ere-Polyak (PRP)   \cite{Polak1969The,Polak1969Note}   and Dai-Yuan (DY)  \cite{Dai1999A}
		formulae, and are given by
		\[\beta _k^{FR} = \frac{{{{\left\| {{g_{k}}} \right\|}^2}}}{{{{\left\| {{g_{k-1  }}} \right\|}^2}}},\;\;\;\;\beta _k^{HS} = \frac{{g_{k }^T{y_{k-1  }}}}{{d_{k-1  }^T{y_{k-1  }}}},\;\;\;\beta _k^{PRP} = \frac{{g_{k}^T{y_{k-1 }}}}{{{{\left\| {{g_{k-1 }}} \right\|}^2}}},\;\;\;\beta _k^{DY} = \frac{{{{\left\| {{g_{k}}} \right\|}^2}}}{{d_{k-1  }^T{y_{k-1  }}}},\]
	 	where $\left\| \cdot \right\| $ denotes the    Euclidean norm. 
		
		By deleting the	  third term of the memoryless quasi-Newton search direction,  Hager and Zhang  \cite{Hager2005A} presented  a    famous  efficient conjugate gradient   method (CG$\_ $DESCENT, We also call it HZ CG algorithm  for short) with
		\begin{align}
			{\beta_k ^{HZ}} = \frac{{g_{k + 1}^T{y_k}}}{{d_k^T{y_k}}} - 2 \frac{{{{\left\| {{y_k}} \right\|}^2}}}{{d_k^T{y_k}}}\frac{{g_{k + 1}^T{d_k}}}{{d_k^T{y_k}}},  \label{eq:HagerZhangbeta}
		\end{align}
and established the global   convergence   under the standard Wolfe line search. And the numerical results in \cite{Hager2005A,HagerLMCGDESCENT} indicated that CG$\_ $DESCENT with
		the approximate Wolfe  line search (AWolfe line search):
		$$ \sigma g_k^T{d_k} \le g{\left( {{x_k} + {\alpha _k}{d_k}} \right)^T}{d_k} \le \left( {2\delta  - 1} \right)g_k^T{d_k}, $$  where $  0 < \delta  < 0.5  \; \text{and} \; \delta  \le \sigma  < 1, $   is very efficient. In 2013, Dai and Kou \cite{Dai2013A} projected a multiple of the memoryless BFGS direction of Perry \cite{Perry1977A} and Shanno \cite{Shanno1978On}   into the manifold
		$\left\{ { - {g_{k + 1}} + s{d_k}:s \in \mathbb R} \right\} $
		and presented a family of   conjugate gradient algorithms (CGOPT, We also call them Dai-Kou CG algorithms for short) with the
		improved Wolfe line search,
		and  the numerical results in \cite{Dai2013A} suggested that CGOPT with   the following  parameter:
		\begin{align}
			{\beta_k ^{DK}} = \frac{{g_{k + 1}^T{y_k}}}{{d_k^T{y_k}}} - \frac{{{{\left\| {{y_k}} \right\|}^2}}}{{d_k^T{y_k}}}\frac{{g_{k + 1}^T{d_k}}}{{d_k^T{y_k}}}   \label{eq:Daibeta}
		\end{align}
		is the most efficient.  CG$\_ $DESCENT and CGOPT are both popular and quite efficient CG software packages. 	So far, conjugate gradient methods have attracted extremely extensive attention and the advance about conjugate  gradient methods can be referred as \cite{Andrei2020Nonlinear}.

		In conjugate gradient methods, the stepsize $ \alpha_k $ is often required to satisfy certain line search conditions. Among them, the strong Wolfe 	line search is often used  in the early convergence analysis, which  aims to find a
		stepsize   satisfying the following conditions
		\begin{gather}\label{eq:StandWolf1}	f\left( {{x_k} + {\alpha _k}{d_k}} \right) \le f\left( {{x_k}} \right) + \sigma {\alpha _k}g_k^T{d_k},  \\ \label{eq:StrongWolf2} \left |  g_{k+1} ^T{d_k} \right |  \le - \delta g_k^T{d_k},	\end{gather}
		where $ 0< \delta < \sigma < 1. $
		The standard Wolfe line search is also preferred due to the relatively easy numerical implementation, which aims to find a stepsize   satisfying \eqref{eq:StandWolf1}  and
		\begin{equation}
			\label{eq:StandWolf2}    g_{k+1} ^T{d_k}    \ge   \delta g_k^T{d_k}.	\end{equation}
		 		The sufficient descent property of the search direction plays an important role in the convergence analysis, which requires the search direction to satisfy	
		\begin{equation}\label{eq:SuffDesc1}g_k^T{d_k} \le  - c {\left\| {{g_k}} \right\|^2},  \	\end{equation}
		where $ c>0 $.
		
	 The subspace minimization conjugate gradient (SMCG) methods are the  the generalization of traditional conjugate gradient methods, and have also received much attention recently. The subspace minimization conjugate gradient  methods  were first   proposed by Yuan and Stoer \cite{Yuan1995A} in 1995, where the search direction is computed    by minimizing a  quadratic    model over the    subspace   $V_k = Span\left\{ {{g_k},{s_{k - 1}}} \right\}  $:
		\begin{equation}\label{eq:QuaModel}\;\mathop {\min }\limits_{d \in V_k} \;g_k^Td + \frac{1}{2}{d^T}{B_k}d,\;	\end{equation}
		where $B_k \in \mathbb R^{n \times n } $ is a symmetric and positive definite approximation to the Hessian matrix and  satisfies the standard secant equation
		$  B_k s_{k-1} = y_{k-1}. $ 	Since $d_k \in V_k$ can be expressed as  \begin{equation}\label{eq:dkform}  d    = {{u}{g_k} + {v }{s_{k - 1}}}, \end{equation} where $ u,v \in \mathbb  R $, by  substituting  (\ref{eq:dkform}) into (\ref{eq:QuaModel}) and using the standard secant equation, we     arrange      (\ref{eq:QuaModel})  as the following form :
		\begin{equation}\label{eq:NewQuaModel}\mathop {\min }\limits_{u,v \in \mathbb R} \;  {\left( {\begin{array}{*{20}{c}}
						{{{\left\| {{g_k}} \right\|}^2}}\\
						{g_k^T{s_{k - 1}}}
					\end{array}} \right)^T}\left( {\begin{array}{*{20}{c}}
					u\\
					v
				\end{array}} \right) + \frac{1}{2}{\left( {\begin{array}{*{20}{c}}
					u\\
					v
				\end{array}} \right)^T}\left( {\begin{array}{*{20}{c}}
				{{\rho _k}}&{g_k^T{y_{k - 1}}}\\
				{g_k^T{y_{k - 1}}}&{s_{k - 1}^T{y_{k - 1}}}
			\end{array}} \right)\left( {\begin{array}{*{20}{c}}
			u\\
			v
		\end{array}} \right),	\end{equation}
	where $ \rho_k \approx g_k^TB_kg_k $, namely, $ \rho_k $ is the estimate of $ g_k^TB_kg_k $.

	At first  the SMCG methods  received    little attention. For example, Andrei \cite{Andrei2014An}
	presented an efficient  SMCG method, where the search direction is generated over $ -g_k+ Span\left\lbrace  s_{k-1} , y_{k-1} \right\rbrace $;
	based on \cite{Andrei2014An}, Yang et al. \cite{Yang2017A}
	developed another SMCG method, in which the search direction is generated  over $ -g_k+ Span\left\lbrace  s_{k-1} , s_{k-2} \right\rbrace $.
	A   significant work about the SMCG method was given by Dai and Kou \cite{Dai2016A} in 2016. More specially,   Dai and Kou  established the finite termination for two dimensional convex quadratic functions of the SMCG method and presented a  Barzilai-Borwein conjugate gradient (BBCG) methods  with an efficient   estimate of the parameter $ \rho_k $:
	\begin{equation}\label{eq:BBCG3rhok} {\rho^{BBCG3} _k} = \frac{3}{2}\frac{{{{\left\| {{y_{k - 1}}} \right\|}^2}}}{{s_{k - 1}^T{y_{k - 1}}}}{\left\| {{g_k}} \right\|^2}  \end{equation}
	based on the BB method \cite{Dai2016A}.
	Motivated by  the SMCG method and $ {\rho^{BBCG3} _k} $, Liu and Liu \cite{Liu2019An} extended  the BBCG3 method  to general unconstrained optimization and presented an efficient subspace minimization conjugate gradient  method (SMCG$ \_ $BB) with the generalized   Wolfe line search. Since then, a lot of   SMCG methods   emerged for unconstrained optimization.   Based on \cite{Liu2019An}, Li  et al.  \cite{Li2019A} presented a new SMCG method based on  conic model and quadratic model; Wang  et al. \cite{Wang2020A} proposed a   new SMCG method based on  tensor model and quadratic model;    Zhao et al. \cite{Zhao2020New}  presented a  new SMCG method based on  regularization model     and quadratic model, and the numerical results in \cite{Zhao2020New} indicated these  SMCG methods is very efficient. 
	Recently, Sun et al. \cite{Sun2021A}    proposed  some accelerated   SMCG methods based on \cite{Zhao2020New}.  More advance about subspace minimization conjugate gradient   method can be referred  \cite{Li2018A,Zhang2019A}.


	Subspace minimization conjugate methods  are a class of efficient iterative methods for unconstrained optimization. On the one hand, the search direction of  SMCG method   is often parallel to the HS conjugate gradient method \cite{Dai2016A}.  On the other hand, traditional conjugate gradient method with  $ d_k =-g_k+ \beta_k d_{k-1} $ is only the special case of  SMCG method with $ d_k =u_k g_k+ v_k  s_{k-1}$. In other words, SMCG methods  can not only      inherit some important properties of traditional conjugate gradient methods but also have more choices for scaling the gradient $ g_k $ by $ u_k $, which will induce  that  SMCG method without the exact line search can enjoy  some additional   nice theoretical properties such as the finite termination for two dimensional convex functions   due to the term $ u_k  $ in the search direction compared to the traditional conjugate gradient methods.  
	In addition, SMCG methods   have also illustrated   nice numerical performance \cite{Liu2019An,Zhao2020New}. Based on the observation,  SMCG methods have  great potentiality and should be received more attention.
	
However,	
		the   estimate $ \rho_k $  of $   g_k^TB_kg_k $    must be      determined   before calculating the search direction. The parameter     $ \rho_k $ is very  crucial to the property and  the numerical performance of   SMCG methods, and   we still  do not understand how the parameter     $ \rho_k $ affects the numerical behavior of the SMCG method.    It is thus a great   challenge to determine the parameter properly.

 A simple analysis for the choice of $ \rho_k $ is given here. In  \eqref{eq:NewQuaModel}, the term $ g_k^T{y_{k-1}} $ and $ s_{k-1}^T{y_{k-1}} $     are obtained by using  the standard secant equation to eliminate $B_k$, namely, $g_k^T{y_{k-1}} = g_k^TB_k{s_{k-1}}$ and $  {s_{k-1}}^T{y_{k-1}} = {s_{k-1}^T}B_k{s_{k-1}}$.
For $ g_k^TB_kg_k $, we can not use the standard secant equation to eliminate $B_k$, which implies that $B_k$  must be given or estimated before computing the search direction.   Is the matrix $ B_k $ of $ g_k^TB_kg_k $   required to satisfy  the standard secant equation  ? If yes, some memoryless   quasi-Newton updating formulae can be applied to generate $ B_k $. It is however observed that the resulting $ \rho_k $ can not bring the desired numerical effect \cite{Dai2016A}. If not,  the matrices $ B_k $ in $  g_k^TB_kg_k $ and in $ g_k^T{y_{k-1}} $ are inconsistent, and we   do not know what will happen   even if the resulting estimate $ \rho_k $ is efficient. For example, in the   efficient choice  $ {\rho^{BBCG3} _k} $, the $B_k$  estimated by $ \frac{3}{2}\frac{{{{\left\| {{y_{k - 1}}} \right\|}^2}}}{{s_{k - 1}^T{y_{k - 1}}}}I $   is not satisfied  the standard secant equation, while the  $B_k$ in $g_k^T{y_{k-1}}$ satifies    the standard secant equation. And we do not konw why  $ {\rho^{BBCG3} _k} $ is so efficient in this way.

 It induces a challenge in determining the parameter $ \rho_k $ properly, which causes that it  is far from consensus for   good choice of the parameter $ \rho_k $ so far.	 As a result, it is no doubt that the choice of the parameter $ \rho_k $ is  a obstacle  for   the development of SMCG methods. A question is naturally to be asked:  can we develop an efficient  SMCG method without determining the parameter $ \rho_k $ ?
	
In the paper we do not focus on exploiting new choices for $ \rho_k $ since it is   difficult to   determine  it properly, as mentioned above. 	Instead, 	we are interested to focus on the    above question and study a new subspace minimization conjugate method without determining the important parameter $ \rho_k $.  Motivated by Dai-Kou conjugate gradient method \cite{Dai2016A}, we project the search direction of the memoryless quasi-Newton method into the subspace spanned by the current gradient and the latest search direction  and develop   a new SMCG method for unconstrained optimization. The new search direction is proved to be  descent.  It is remarkable that the SMCG method without any line  search   enjoys finite termination for two dimensional convex quadratic functions.  With  the improved Wolfe line search,  the convergence  of the proposed method for general nonlinear functions is established under the standard assumptions. Numerical experiments on the 147 test functions from the CUTEst library \cite{Gould2001CUTEr} indicates the proposed method  is very promising.

	The remainder of this  paper is organized  as follows.   We develop a new SMCG method for unconstrained optimization and exploit  some  important properties of the new search direction in Section 2.  In Section 3 we establish   the global convergence  of the proposed method for general nonlinear functions under the standard assumptions.   Some numerical experiments are conducted in Section 4.    Conclusions    are given in the last section.

	\section{New    subspace minimization  conjugate gradient method independent of  the parameter $ \rho_k $}

	In the section,   we first derive the new search directions, analyze their important properties, develop an adaptive scaling factor and present a new subspace minimization  conjugate gradient method independent of  the parameter $ \rho_k $ for unconstrained optimization.
	
	
	\subsection{The proposed search directions and  their important properties}

 	We are interested to
	the self-scaling memoryless
	BFGS method by Perry \cite{Perry1977A} and Shanno \cite{Shanno1978On}, where the search direction $\bar d_k^{PS}   $ is given by  

	%
	%
	%
	\[\bar d_k^{PS} =  - \frac{1}{{{\tau _k}}}{g_k} + \left[ {\frac{{g_k^T{y_{k - 1}}}}{{{\tau _k}s_{k - 1}^T{y_{k - 1}}}} - \left( {1 + \frac{{{{\left\| {{y_{k - 1}}} \right\|}^2}}}{{{\tau _k}s_{k - 1}^T{y_{k - 1}}}}} \right)\frac{{g_k^T{s_{k - 1}}}}{{s_{k - 1}^T{y_{k - 1}}}}} \right]{s_{k - 1}} + \frac{1}{{{\tau _k}}}\frac{{g_k^T{s_{k - 1}}}}{{s_{k - 1}^T{y_{k - 1}}}}{y_{k - 1}}.\]
Here $ \tau_k>0 $ is the scaling parameter.	The scaling memoryless quasi-Newton method is indeed three-term conjugate gradient method.  Specially, if the line search is exact, namely, $ g_k^T{s_{k-1}} =0 $, then the search direction $\bar d_k^{PS} $ is   HS conjugate gradient direction with scaling factor $ \frac{1}{{{\tau _k}}} $. It is not difficult to see that the   search direction $\bar d_k^{PS} $ only satisfies the following Dai-Liao conjugate condition \cite{DaiLiao2001}, namely,
	\[ {\left( {\bar d_k^{PS}} \right)^T}{y_{k - 1}} =  - {t_k}g_k^T{s_{k - 1}}, \;\;\text{where} \;\; t_k=1.\]
	As we know, the adaptive choice for $ t_k $ in Dai-Liao conjugate gradient methods \cite{DaiLiao2001} is usually more efficient than the prefixed choice. In addition, some famous and efficient conjugate gradient methods such as HZ conjugate gradient method \cite{Hager2005A}  and Dai-Kou conjugate gradient method \cite{Dai2013A} are  Dai-Liao  conjugate gradient methods with   adaptive    parameters $ t_k $. Therefore, to impose the search direction $\bar d_k^{PS} $ to satisfy the  Dai-Liao conjugate condition with adaptive    parameter, we
	multiple   $ { \bar {d}_k^{PS}} $ by $  {{{\tau _k}}} $ and obtain the following direction:	
	\begin{equation}\label{eq:d(PS)}d_k^{PS} =  - {g_k} + \left[ {\frac{{{g_k^T}{y_{k - 1}}}}{{s_{k - 1}^T{y_{k - 1}}}} - \left( {{\tau _k} + \frac{{{{\left\| {{y_{k - 1}}} \right\|}^2}}}{{s_{k - 1}^T{y_{k - 1}}}}} \right)\frac{{g_k^T{s_{k - 1}}}}{{s_{k - 1}^T{y_{k - 1}}}}} \right]{s_{k - 1}} + \frac{{g_k^T{s_{k - 1}}}}{{s_{k - 1}^T{y_{k - 1}}}}{y_{k - 1}}.
	\end{equation}
	Obviously, the search direction $ d_k^{PS}  $	satisfies Dai-Liao conjugate condition  	$  \left( {d_k^{PS}}\right) ^T{y_{k - 1}} =  - \tau_k g_k^T{s_{k - 1}}.   $ Noted that the scaling factor $ \tau_k $ is indeed the adaptive parameter in Dai-Liao conjugate gradient method. The   self-scaling memoryless
	BFGS method by Perry \cite{Perry1977A} and Shanno \cite{Shanno1978On} has been  applied successfully   to generate the famous and   efficient  Dai-Kou conjugate gradient method \cite{Dai2013A}.

	The search direction in subspace minimization  conjugate gradient method is usually generated in the subspace $ Span \left\lbrace   g_k, s_{k-1} \right\rbrace$, which means that 	
	$ d_{k }=u_k g_{k } +v_k s_{k-1}  $, where $ u_k $ and $ v_k $ are undetermined parameters. Different from \eqref{eq:NewQuaModel}   requiring to  estimate the parameter $ \rho_k $,  based on the search direction $ d_k^{PS} $, we will give a new way to derive $ u_k $ and $ v_k $  without requiring to  estimate the parameter $ \rho_k $. .
	
	We   consider the case  that $ g_k  $ is not parallel to $ s_{k-1} $, namely,
	\begin{equation} \label{eq:wkparalel}
		{\overline \omega _{k}}{\rm{ = }}\frac{{{{\left( {g_k^T{s_{k - 1}}} \right)}^2}}}{{{{\left\| {{g_k}} \right\|}^2}{{\left\| {{s_{k - 1}}} \right\|}^2}}}\le  \xi_1,
	\end{equation}
	where $0< \xi_1 <1 $ is close to 1. Otherwise, the search direction is naturally set to be    $d_k=  -g_k $.


	By projecting the search direction $ d_k^{PS} $ into the subspace  $ Span \left\lbrace   g_k, s_{k-1} \right\rbrace$, we get the  following subproblem:
	\begin{equation} \label{eq:projectionminPro}
		\mathop {\;\;\;\;\;\;\min }\limits_{\;\;\;\;d_{k }=u_k g_{k } +v_k s_{k-1} }\;\left\| {d_k^{PS}} - {d_{k }} \right\|_2^2.
	\end{equation}	
	
	\noindent	Solving the   subproblem \eqref{eq:projectionminPro}  yields the search direction
	\begin{equation} \label{eq:direction0}
		d_k ={u_k}{g_k} + {v_k}{s_{k - 1}},
	\end{equation}
	where
	\begin{equation} \label{eq:uk1}
		{u_{k  }} =  - 1 + \frac{{g_{k  }^T{y_{k-1}}g_{k  }^T{s_{k-1}}}}{{s_{k-1}^T{y_{k-1}}{{\left\| {{g_{k  }}} \right\|}^2}}} - \frac{{{{\left\| {{g_{k }}} \right\|}^2}{{\left( {g_{k  }^T{s_{k-1}}} \right)}^2} - g_{k  }^T{y_{k-1}}{{\left( {g_{k  }^T{s_{k-1}}} \right)}^3}/s_{k-1}^T{y_{k-1}}}}{{{{\left\| {{g_{k  }}} \right\|}^2}\left[ {{{\left\| {{g_{k }}} \right\|}^2}{{\left\| {{s_{k-1}}} \right\|}^2} - {{\left( {g_{k  }^T{s_{k-1}}} \right)}^2}} \right]}},
	\end{equation}		
	\begin{equation}  \label{eq:vk1}
		{v_{k  }} = \frac{{g_{k  }^T{y_{k-1}}}}{{s_{k-1}^T{y_{k-1}}}} + \frac{{{{\left\| {{g_{k  }}} \right\|}^2}g_{k  }^T{s_{k-1}} - g_{k  }^T{y_{k-1}}{{\left( {g_{k }^T{s_{k-1}}} \right)}^2}/s_{k-1}^T{y_{k-1}}}}{{{{\left\| {{g_{k  }}} \right\|}^2}{{\left\| {{s_{k-1}}} \right\|}^2} - {{\left( {g_{k }^T{s_{k-1}}} \right)}^2}}} - \left( {{\tau _k} + \frac{{{{\left\| {{y_{k-1}}} \right\|}^2}}}{{s_{k-1}^T{y_{k-1}}}}} \right)\frac{{g_{k  }^T{s_{k-1}}}}{{s_{k-1}^T{y_{k-1}}}}.
	\end{equation}
	
\noindent	It is not difficult to see that $ u_k $ and $ v_k $ can be   rewritten as the following forms:	
		\begin{equation}\label{eq:newukvk}
		{u_k}   =  \frac{1}{{1 - {\overline \omega _k}}}\left( { - 1 + \frac{{g_k^T{y_{k - 1}}g_k^T{s_{k - 1}}}}{{s_{k - 1}^T{y_{k - 1}}{{\left\| {{g_k}} \right\|}^2}}}} \right),\;			v_k  =   \frac{{1 - 2\bar \omega _k }}{{1 - \bar \omega _k }}\frac{{g_k^T y_{k - 1} }}{{s_{k - 1}^T y_{k - 1} }} - \left( {\tau _k  + \frac{{\left\| {y_{k - 1} } \right\|^2 }}{{s_{k - 1}^T y_{k - 1} }}} -  \frac{{s_{k - 1}^T y_{k - 1} }}{(1 - \overline \omega _k ) {\left\| {s_{k - 1} } \right\|^2 }}  \right)\frac{{g_k^T s_{k - 1} }}{{s_{k - 1}^T y_{k - 1} }},
		\end{equation}
		which are similar to the forms of conjugate gradient method. The new search direction \eqref{eq:newukvk} can be regarded the extension of Dai-Kou conjugate gradient direction.
	
	
	%
	
 	It is noted that the parameter  $ \tau_k $  in \eqref{eq:vk1} is the scaling factor in the memoryless quasi-Newton method, which is crucial to the numerical  performance of the corresponding   methods. There are various choices for $  \tau_k $, and in the    analysis on  the descent property and global convergence,    the following choices 	
	\begin{equation}\label{eq:sometauk}
		{\tau _k^{B}} = \frac{{s_{k - 1}^T{y_{k - 1}}}}{{{{\left\| {{s_{k - 1}}} \right\|}^2}}},\;\;\;\;\;\;{\tau _k^{H}} = \frac{{{{\left\| {{y_{k - 1}}} \right\|}^2}}}{{s_{k - 1}^T{y_{k - 1}}}},\;\;\;\;\;\;{\tau _k^{(1)}} = 1 
	\end{equation}
	are considered.
We also give an adaptive choice of $ \tau_k $ in Section 3.2 based on Theorem \ref{thm:Thorem1}.
	
	\begin{rem} \label{Remark1}
		If the line search is exact, namely, $ g_k^Ts_{k-1} =0 $, then it follows that $ u_k =-1 $	and $ v_k=\frac{g_k^Ty_{k-1}}{ {s_{k-1}^T{y_{k-1}}}} $ or  $ v_k=\frac{g_k^Ty_{k-1}}{ \alpha_{k-1}\left\| {g_{k-1} }\right\| ^2} $,  which mean the  search direction \eqref{eq:direction0} reduces to the HS  or PRP conjugate gradient  direction.
	\end{rem}
	\begin{rem}  \label{Remark2}The search direction \eqref{eq:direction0} satisfies the Dai-Liao conjugate condition
		$$d_k^T{y_{k - 1}} = \left[ {\frac{{\frac{{{{\left( {g_k^T{y_{k - 1}}} \right)}^2}{{\left\| {{s_{k - 1}}} \right\|}^2}}}{{s_{k - 1}^T{y_{k - 1}}}} - 2g_k^T{y_{k - 1}}g_k^T{s_{k - 1}} + {{\left\| {{g_{k - 1}}} \right\|}^2}s_{k - 1}^T{y_{k - 1}}}}{{{\Delta _k}}} - \left( {{\tau _k} + \frac{{{{\left\| {{y_{k - 1}}} \right\|}^2}}}{{s_{k - 1}^T{y_{k - 1}}}}} \right)} \right]g_k^T{s_{k - 1}} \buildrel \Delta \over = {t_k}g_k^T{s_{k - 1}}.$$
	\end{rem}
	
	We will establish  an interesting property---the finite termination of the SMCG method with \eqref{eq:CGform} and \eqref{eq:direction0} in the following theorem.
	
	\begin{theorem} \label{thm:Thorem1}
		Consider the SMCG method    \eqref{eq:CGform} and \eqref{eq:direction0} with $ \tau_k =1 $ for the convex quadratic function $q\left( x \right) = \frac{1}{2}{x^{\mathop{\rm T}\nolimits} }Ax + {b^{\mathop{\rm T}\nolimits} }x,{\rm{   }}x \in {\mathbb{R}^2}$,   where $A \in {\mathbb{R}^{2 \times 2}}$ is a symmetric and positive definite matrix and $b \in {\mathbb{R}^2}$. Assume that    $ d_0=-\alpha_0g_0 $, where $\alpha_0$ is the exact stepsize. Then, we must have that ${g_j} = 0$ for some $j \le 3$.
	\end{theorem}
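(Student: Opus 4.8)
The plan is to run the iteration explicitly on $q$ and show the minimizer is reached in at most three steps, using the identities $y_k=As_k$ and $g_{k+1}=g_k+As_k$ valid for a quadratic (and $s_k=d_k$ for $k\ge 1$, since ``no line search'' means $\alpha_k=1$). First dispose of the trivial case $g_0=0$ and assume $g_0\ne0$. Since $x_1=x_0-\alpha_0g_0$ with $\alpha_0$ the exact stepsize, $g_1^Ts_0=-\alpha_0g_1^Tg_0=0$; hence either $g_1=0$ (and $j=1$) or $\{g_0,g_1\}$ is an orthogonal basis of $\mathbb R^2$. In the latter case $s_0=-\alpha_0g_0$ is not parallel to $g_1$, so \eqref{eq:direction0} governs the step at $k=1$, and (as verified below) $g_2$ is not parallel to $s_1$, so \eqref{eq:direction0} applies at $k=2$ as well; throughout we stay in the non-degenerate regime where the subspace formula, not the fallback $d_k=-g_k$, is in force.

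Second, I compute $d_1$. Since $g_1^Ts_0=0$ we have $\overline\omega_1=0$, so by \eqref{eq:newukvk} $u_1=-1$ and $v_1=g_1^Ty_0/(s_0^Ty_0)$; thus $d_1=-g_1+\frac{g_1^Ty_0}{s_0^Ty_0}s_0$, the Hestenes--Stiefel direction of Remark~\ref{Remark1}, which after using $y_0=g_1-g_0$ and orthogonality becomes $d_1=-g_1-\frac{\|g_1\|^2}{\|g_0\|^2}g_0$. Then $x_2=x_1+d_1$ and $y_1=g_2-g_1=Ad_1$. The key observation is that $d_1$ is $A$-conjugate to $g_0$: by Remark~\ref{Remark2}, $d_1^Ty_0=t_1g_1^Ts_0=0$, and since $y_0=As_0$ is a nonzero multiple of $Ag_0$ this gives $d_1^TAg_0=0$, hence $Ad_1\perp g_0$ by symmetry of $A$, i.e. $Ad_1$ is a multiple of $g_1$. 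Consequently $g_2=\mu g_1$ for a scalar $\mu$, with $\mu<1$ because $s_1^Ty_1=d_1^TAd_1>0$ while $s_1^Ty_1=(1-\mu)\|g_1\|^2$. If $\mu=0$ then $g_2=0$ and $j=2$; otherwise $g_2$ is a nonzero multiple of $g_1$ and $Ad_1=(\mu-1)g_1$.

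Third, I compute $d_2$ and $g_3$. Write $r:=\|g_1\|^2/\|g_0\|^2>0$; then $y_1=(\mu-1)g_1$ and all inner products entering \eqref{eq:newukvk} at $k=2$ become elementary functions of $\mu$ and $r$ (for instance $\overline\omega_2=1/(1+r)$). One checks the identity $g_2^Ty_1\cdot g_2^Ts_1=s_1^Ty_1\cdot\|g_2\|^2$, which forces $u_2=0$, so $d_2=v_2s_1=v_2d_1$; substituting the inner products into the $v_2$-formula of \eqref{eq:newukvk} with $\tau_2=1$ and simplifying yields $v_2=\mu/(1-\mu)$. Hence
\[
g_3=g_2+v_2Ad_1=\mu g_1+\frac{\mu}{1-\mu}(\mu-1)g_1=0,
\]
i.e. $j=3$, which completes the proof.

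Finally, on difficulty: the structural steps are all short — orthogonality after the exact step, the HS form of $d_1$, the conjugacy $d_1^TAg_0=0$ forcing $g_2\parallel g_1$, and then $u_2=0$. The genuinely computational step, and the main obstacle, is the last paragraph: evaluating $g_2^Ts_1,\ g_2^Ty_1,\ s_1^Ty_1,\ \|g_2\|^2,\ \|s_1\|^2$ in terms of $\mu,r$ and verifying the cancellation $v_2=\mu/(1-\mu)$. It is routine but must be done carefully, and it is exactly there that the hypothesis $\tau_k=1$ enters.
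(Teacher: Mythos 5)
Your proof is correct and takes essentially the same route as the paper's: the exact first step gives $g_1^Ts_0=0$, the Dai--Liao condition of Remark \ref{Remark2} gives $d_1^TAs_0=0$, which in $\mathbb{R}^2$ forces $g_1$, $g_2$, $y_1$ to be collinear, and then $u_2=0$ together with an explicit $v_2$ yields $g_3=(1-\tau_2)g_2=0$; your parametrization $g_2=\mu g_1$ is just the paper's $y_1=ag_2$ with $a=(\mu-1)/\mu$, and your asserted simplification $v_2=\mu/(1-\mu)=-\tau_2/a$ does check out. The only real difference is cosmetic: you verify the non-degeneracy ($\overline\omega_2=1/(1+r)<1$) and dispose of the cases $g_1=0$, $g_2=0$ explicitly, points the paper leaves implicit.
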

	
	\begin{proof}
		
		~ Assume that ${g_j} \ne 0$ for $j = 0,1,2$. Since the first step is a Cauchy steepest descent step, we   know
		\begin{equation}\label{eq:g_2^T{s_1}=0}
			g_1^T{s_0} = 0.
		\end{equation}
		
		\noindent By \eqref{eq:direction0},  $ s_1=d_1 $ and Remark \ref{Remark2}, we have that
		\begin{align*} s_1^T{y_0}    = d_1^T{y_0}
			=  t_1 g_1^T{s_0}   = 0,
		\end{align*}
		where $ t_1 $ is given by Remark \ref{Remark2}. Thus,
		\begin{equation}\label{eq:s_2^T{y_1}=0}
			y_1^T{s_0} = s_1^TA{s_0} = 0. 	
		\end{equation}
		Since $n = 2,{s_0} \ne 0 \; \text{and} \;{y_1} = {g_2} - {g_1},$ we know from \eqref{eq:g_2^T{s_1}=0} and \eqref{eq:s_2^T{y_1}=0}    that ${g_1},{g_2}$  ${y_1}$ are collinear and there must exist some real number $a \ne 0$ such that
		\begin{equation}\label{eq:{y_1} = a{g_2}.}
			{y_1} = a{g_2}.
		\end{equation}
		
		\noindent By  \eqref{eq:uk1} and \eqref{eq:vk1}, we have
		\begin{align*}
			{u_2} &=  - 1 + \frac{{g_2^T{y_1}g_2^T{s_1}}}{{{{\left\| {{g_2}} \right\|}^2}s_1^T{y_1}}} - \frac{{{{\left\| {{g_2}} \right\|}^2}g_2^T{s_1} - g_2^T{y_1}{{\left( {g_2^T{s_1}} \right)}^2}/s_1^T{y_1}}}{{{{\left\| {{g_2}} \right\|}^2}{{\left\| {{s_1}} \right\|}^2} - {{\left( {g_2^T{s_1}} \right)}^2}}}\frac{{g_2^T{s_1}}}{{{{\left\| {{g_2}} \right\|}^2}}}\\
			&=  - 1 + \frac{{a{{\left\| {{g_2}} \right\|}^2}g_2^T{s_1}}}{{a{{\left\| {{g_2}} \right\|}^2}g_2^T{s_1}}} - \frac{{{{\left\| {{g_2}} \right\|}^2}g_2^T{s_1} - a{{\left\| {{g_2}} \right\|}^2}{{\left( {g_2^T{s_1}} \right)}^2}/(ag_2^Ts_1 )}}{{{{\left\| {{g_2}} \right\|}^2}{{\left\| {{s_1}} \right\|}^2} - {{\left( {g_2^T{s_1}} \right)}^2}}}\frac{{g_2^T{s_1}}}{{{{\left\| {{g_2}} \right\|}^2}}}\\
			&=- 1 + 1 + 0\\
			& =  0,\\
			{v_2} & = \frac{{g_2^T{y_1}}}{{s_1^T{y_1}}} - \left( {{\tau _k} + \frac{{{{\left\| {{y_1}} \right\|}^2}}}{{s_1^T{y_1}}}} \right)\frac{{g_2^T{s_1}}}{{s_1^T{y_1}}} + \frac{{{{\left\| {{g_2}} \right\|}^2}g_2^T{s_1} - g_2^T{y_1}{{\left( {g_2^T{s_1}} \right)}^2}/s_1^T{y_1}}}{{{{\left\| {{g_2}} \right\|}^2}{{\left\| {{s_1}} \right\|}^2} - {{\left( {g_2^T{s_1}} \right)}^2}}}\\
			& = \frac{{a{{\left\| {{g_2}} \right\|}^2}}}{{ag_2^T{s_1}}} - \left( {{\tau _k} + \frac{{{a^2}{{\left\| {{g_1}} \right\|}^2}}}{{ag_2^T{s_1}}}} \right)\frac{{g_2^T{s_1}}}{{ag_2^T{s_1}}} + 0\\
			& =  - \frac{{{\tau _k}}}{a},
		\end{align*}
		which implies that  ${s_2} = d_2= - \frac{{{\tau _k}}}{a}{s_1} .$
		Therefore,
		\begin{equation}\label{eq:g_4}
			{g_3} = {g_2} + {y_2} = {g_2} + A{s_2} = {g_2} - \frac{{{\tau _k}}}{a}A{s_1} = {g_2} - \frac{{{\tau _k}}}{a}{y_1} = (1 - {\tau _k})g_2.
		\end{equation}
		Since  ${\tau _k} ={\tau _k^{(1)}}= 1,$ we have   ${g_3} = 0,$ which completes the proof.  \qed

	\end{proof}
	
	\begin{rem}
		It follows  that 	the SMCG method    \eqref{eq:CGform} and \eqref{eq:direction0} with $ \tau_k =1 $ without any line search except the first Cauchy steepest descent  iteration enjoys the finite termination for two dimensional  convex quadratic functions. It seems  that it is not possible to obtain the same conclusion for traditional conjugate gradient methods without the exact line search.
	\end{rem}
	
	Together with  \eqref{eq:wkparalel}, let us    consider the following search direction:
	\begin{equation} \label{eq:direction1}
		{d_k} = \left\{ {\begin{array}{*{20}{c}}
				{ - {g_k},\;\;\;\;\;\;\;\;\;\;\;\;\;\;\;\;\;\;\; {\rm{if}}\;k = 0\;{\rm{or}}\;{\overline \omega _k} > {\xi _1},}\;\;\;\\
				{{u_k}{g_k} + {v_k}{s_{k - 1}},\;\;{\rm{otherwise,}}\;\;\;\;\;\;\;\;\;\;\;\;\;\;\;\;\;\;\; }
			\end{array}} \right.
		\end{equation}
		where $ {\overline \omega _k}, \; {u_k} $ and $ v_k $  are given by \eqref{eq:wkparalel}, \eqref{eq:uk1} and \eqref{eq:vk1}, respectively. We first do the following assumption:

		\begin{assumption} \label{Assumption}
			(i)The objective function $f $ is continuously differentiable  on $ {\mathbb   R^n} $;
			(ii) The level set $  {\cal L} = \left\{ {x|f\left( x \right) \le f\left( {{x_0}} \right) + \sum\limits_{k \ge 0} {{{\overline \eta  }_k}} } \right\}  $  is bounded, where $ \sum\limits_{k \ge 0} {{{\overline \eta  }_k}}< +\infty $; (iii) The gradient $g $  is  Lipschitz continuous on $ \mathbb R^n $, namely, there exists  a constant  $L>0$ such that		
			\begin{equation}\label{eq:Lipschitz continuous}
				\parallel g(x) - g(y)\parallel  \le L\parallel x - y\parallel ,\;\;\forall x,y \in  \mathbb R^{n}.
			\end{equation}	
			
		\end{assumption}

		Denote
		\begin{equation}\label{eq:pkandgammak}
			{p_{k}} = \frac{{{{\left\| {{y_{k-1}}} \right\|}^2}{{\left\| {{s_{k-1}}} \right\|}^2}}}{{{{\left( {s_{k-1}^T{y_{k-1}}} \right)}^2}}}, \;\;\;
			{\gamma _{k }} = {\tau _k}\frac{{{{\left\| {{s_{k-1}}} \right\|}^2}}}{{s_{k-1}^T{y_{k-1}}}}.
		\end{equation}
		The following lemma  discusses the descent property of the   search direction \eqref{eq:direction1}.
		
		\begin{lemma} \label{LemmaSuffDesc} Assume that $ f  $ satisfies Assumption \ref{Assumption} (iii), and  consider the   subspace minimization conjugate gradient methods \eqref{eq:CGform} and \eqref{eq:direction1} with any one of $ \tau_k $ in \eqref{eq:sometauk}. If $ s_{k-1}^Ty_{k-1}>0 $, then     	
			\begin{align} \label{eq:Suffdier0}
				g_k^T{d_k} <   0.
			\end{align}
			Furthermore, if   $ f $ is  uniformly convex, namely, there   exists $\mu  > 0$ such that
			\begin{equation} \label{eq:strongconvex0}
				{\left( {g\left( x \right) - g\left( y \right)} \right)^T}\left( {x - y} \right) \ge \mu {\left\| {x - y} \right\|^2},\;\;\;\forall x,y \in \mathbb R^n,
			\end{equation}
			then there  must exists $  c>0 $ such that
			\begin{align} \label{eq:Suffdier2}
				g_k^T{d_k} < -  c \left\| g_k \right\|^2.
			\end{align}
			
		\end{lemma}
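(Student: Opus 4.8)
The plan is to compute $g_k^T d_k$ explicitly using the representation $d_k = u_k g_k + v_k s_{k-1}$ with $u_k, v_k$ given by \eqref{eq:uk1}--\eqref{eq:vk1} (or equivalently \eqref{eq:newukvk}), and then bound it. When $k=0$ or $\overline\omega_k > \xi_1$ the direction is $-g_k$ and \eqref{eq:Suffdier0} is trivial, so assume $\overline\omega_k \le \xi_1 < 1$. Write $g_k^T d_k = u_k \|g_k\|^2 + v_k\, g_k^T s_{k-1}$. Using the compact form \eqref{eq:newukvk}, the first term contributes $\frac{1}{1-\overline\omega_k}\left(-\|g_k\|^2 + \frac{(g_k^T y_{k-1})(g_k^T s_{k-1})}{s_{k-1}^T y_{k-1}}\right)$, and $v_k\, g_k^T s_{k-1}$ contributes further terms in $g_k^T y_{k-1}$, $\tau_k$, $\|y_{k-1}\|^2$ and $(g_k^T s_{k-1})^2$. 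First I would introduce the shorthand $a = g_k^T s_{k-1}$, $b = s_{k-1}^T y_{k-1}$, $c_1 = g_k^T y_{k-1}$ and collect $g_k^T d_k$ over the common denominator $(1-\overline\omega_k)\|g_k\|^2\|s_{k-1}\|^2$; the cross terms in $c_1$ should largely cancel, leaving an expression whose negativity can be read off.

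The cleanest route is to exploit Remark \ref{Remark2}: since $d_k$ satisfies the Dai--Liao conjugate condition $d_k^T y_{k-1} = t_k\, g_k^T s_{k-1}$, and $g_k = d_k - (\text{the }s_{k-1}\text{-part})$... actually more directly, I would instead complete the square. The key algebraic fact I expect to fall out is that
\[
g_k^T d_k = -\frac{1}{1-\overline\omega_k}\left\| g_k - \frac{g_k^T s_{k-1}}{\|s_{k-1}\|^2} s_{k-1}\right\|^2 \Big/ \text{(something)} \;-\; (\text{nonnegative terms involving }\tau_k, p_k),
\]
i.e. $g_k^T d_k$ should turn out to equal a negative-definite quadratic form in $g_k$ restricted to $V_k$. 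Concretely, recall that minimizing $\|d_k^{PS} - d\|^2$ over $d \in V_k$ gives the orthogonal projection $P_{V_k} d_k^{PS}$, so $d_k = P_{V_k} d_k^{PS}$ and hence $g_k^T d_k = g_k^T d_k^{PS}$ (because $g_k \in V_k$, so $g_k^T(d_k^{PS} - d_k) = 0$). This is the crucial simplification: I only need to bound $g_k^T d_k^{PS}$, and from the explicit form of $d_k^{PS}$ in \eqref{eq:d(PS)},
\[
g_k^T d_k^{PS} = -\|g_k\|^2 + \frac{(g_k^T y_{k-1})(g_k^T s_{k-1})}{s_{k-1}^T y_{k-1}} - \left(\tau_k + \frac{\|y_{k-1}\|^2}{s_{k-1}^T y_{k-1}}\right)\frac{(g_k^T s_{k-1})^2}{s_{k-1}^T y_{k-1}} + \frac{(g_k^T s_{k-1})(g_k^T y_{k-1})}{s_{k-1}^T y_{k-1}}.
\]
Combining the two $\frac{(g_k^T y_{k-1})(g_k^T s_{k-1})}{s_{k-1}^T y_{k-1}}$ terms and treating this as a quadratic in $g_k^T y_{k-1}$ (or by a Cauchy--Schwarz/AM--GM bound $2\,\frac{(g_k^T y_{k-1})(g_k^T s_{k-1})}{b} \le \epsilon \frac{(g_k^T y_{k-1})^2}{b} \cdot(\ldots)$), one shows $g_k^T d_k^{PS} < 0$ whenever $b = s_{k-1}^T y_{k-1} > 0$ and $g_k \ne 0$; the term $\tau_k (g_k^Ts_{k-1})^2/b$ is nonnegative for every $\tau_k$ in \eqref{eq:sometauk} and only helps.

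For the second, quantitative claim \eqref{eq:Suffdier2} under uniform convexity, the plan is to derive a lower bound $g_k^T d_k \le -c\|g_k\|^2$ by showing the ratio $-g_k^T d_k/\|g_k\|^2$ is bounded below by a positive constant. Uniform convexity \eqref{eq:strongconvex0} with $s_{k-1} = \alpha_{k-1} d_{k-1}$ gives $s_{k-1}^T y_{k-1} \ge \mu\|s_{k-1}\|^2$, while Lipschitz continuity gives $\|y_{k-1}\| \le L\|s_{k-1}\|$ and hence $\|y_{k-1}\|^2 \le (L/\mu)\,s_{k-1}^T y_{k-1}$; thus $p_k \le L/\mu$ and $\gamma_k$ is bounded above and below for each of the three choices of $\tau_k$. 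Using these bounds together with $\overline\omega_k \le \xi_1 < 1$ and Cauchy--Schwarz on $g_k^T y_{k-1}$ and $g_k^T s_{k-1}$, I would estimate each term of the expression for $g_k^T d_k^{PS}$ above to conclude $g_k^T d_k^{PS} \le -c\|g_k\|^2$ for an explicit $c = c(\mu, L, \xi_1) > 0$. The main obstacle I anticipate is the bookkeeping in the first part: making sure the cross terms in $g_k^T y_{k-1}$ are handled so that the remaining expression is genuinely negative for \emph{all} admissible data (not merely generically), and verifying the sign is robust across all three choices of $\tau_k$ in \eqref{eq:sometauk}; the observation $g_k^T d_k = g_k^T d_k^{PS}$ is what makes this tractable rather than wrestling with the unwieldy forms \eqref{eq:uk1}--\eqref{eq:vk1} directly.
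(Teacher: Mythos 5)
Your proposal is correct in substance, but it replaces the paper's central device with a more elementary one, so it is worth comparing. Your key simplification, $g_k^Td_k=g_k^Td_k^{PS}$ because $d_k$ is the Euclidean projection of $d_k^{PS}$ onto $V_k$ and $g_k\in V_k$, is valid (it needs $\overline\omega_k\le\xi_1<1$ so that the projection is well defined) and reproduces exactly the paper's identity \eqref{eq:g_k^T{d_k}}, which the paper instead obtains by substituting \eqref{eq:uk1}--\eqref{eq:vk1}; your route is the cleaner justification of that identity. From there the two arguments diverge: the paper writes $d_k=-H_kg_k$, symmetrizes to $\bar H_k$ as in \eqref{eq:symmetrize}, computes $\lambda_{\min}\lambda_{\max}=\gamma_k$ and $\lambda_{\min}+\lambda_{\max}=p_k+\gamma_k$ from the determinant and trace, and then bounds $\lambda_{\min}$ for each $\tau_k$ in \eqref{eq:sometauk} using monotonicity in $p_k,\gamma_k$ together with $p_k\le L^2/\mu^2$ and $\gamma_k\le 1/\mu$; you instead complete the square / use AM--GM with Cauchy--Schwarz directly on \eqref{eq:g_k^T{d_k}}. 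Your plan does go through: the exact identity is $g_k^Td_k=-\left\|g_k-\frac{g_k^Ts_{k-1}}{s_{k-1}^Ty_{k-1}}y_{k-1}\right\|^2-\tau_k\frac{(g_k^Ts_{k-1})^2}{s_{k-1}^Ty_{k-1}}$, which gives \eqref{eq:Suffdier0} immediately (note the completion is with $y_{k-1}$, not with $s_{k-1}$ as in your tentative display, and the $\tau_k$-term is genuinely needed for strictness when $g_k$ is parallel to $y_{k-1}$). For \eqref{eq:Suffdier2} you must not use the sharp AM--GM weight (which only yields $-\tau_k(g_k^Ts_{k-1})^2/s_{k-1}^Ty_{k-1}$); choosing the weight $\theta=1+\tau_k\,s_{k-1}^Ty_{k-1}/\|y_{k-1}\|^2$ retains $-(1-1/\theta)\|g_k\|^2$, and the required lower bound on $\tau_k s_{k-1}^Ty_{k-1}/\|y_{k-1}\|^2$ follows for all three choices in \eqref{eq:sometauk} from \eqref{eq:strongconvex0} and Lipschitz continuity (e.g. $c=1/2$ for $\tau_k^H$, $c=1/(1+L^2/\mu^2)$ for $\tau_k^B$, $c=\mu/(L^2+\mu)$ for $\tau_k=1$), which matches the paper's conclusions; your stated bound $p_k\le L/\mu$ should be $p_k\le L^2/\mu^2$, a harmless constant slip. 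What each approach buys: yours is shorter and gives explicit constants without eigenvalue computations, while the paper's $\bar H_k$/$\lambda_{\min}$ machinery is reused almost verbatim in Lemma \ref{LemmaSuffDesc2} for the truncated direction \eqref{eq:Truconditon2}, where the convexity assumption is dropped and the bounds on $p_k,\gamma_k$ come from the truncation instead.
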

		
		\begin{proof}
			We prove  the conclusion by dividing it into the following two cases.
			
			(i) $ d_k = -g_k $.  We know easily that  \eqref{eq:Suffdier0} and \eqref{eq:Suffdier2} both hold.
			
			(ii)  $ d_k = u_kg_k +v_k s_{k-1} $, where $ u_k $ and $ v_k $ are given by \eqref{eq:uk1} and \eqref{eq:vk1}, respectively.  It is not difficult to get that
			
			\begin{equation}\label{eq:g_k^T{d_k}}
				g_k^T{d_k} =  - {\left\| {{g_k}} \right\|^2} + \frac{{2g_k^T{s_{k - 1}}g_k^T{y_{k - 1}}}}{{s_{k - 1}^T{y_{k - 1}}}} - \left( {{\tau _k} + \frac{{{{\left\| {{y_{k - 1}}} \right\|}^2}}}{{s_{k - 1}^T{y_{k - 1}}}}} \right)\frac{{{{(g_k^T{s_{k - 1}})}^2}}}{{s_{k - 1}^T{y_{k - 1}}}},
			\end{equation}	
			which, in this sense,  implies that the above search direction     can be treated as
			\begin{equation}\label{d_k=- {H_k}{g_k}}
				\begin{aligned}
					{d_k}
					&=  - {g_k} + \left[ {\frac{{2g_k^T{y_{k - 1}}}}{{s_{k - 1}^T{y_{k - 1}}}} - \left( {{\tau _k} + \frac{{{{\left\| {{y_{k - 1}}} \right\|}^2}}}{{s_{k - 1}^T{y_{k - 1}}}}} \right)\frac{{g_k^T{s_{k - 1}}}}{{s_{k - 1}^T{y_{k - 1}}}}} \right]{s_{k - 1}}\\
					&=  - \left( {I - \frac{{2{s_{k - 1}}y_{k - 1}^T - \left( {{\tau _k} + \frac{{{{\left\| {{y_{k - 1}}} \right\|}^2}}}{{s_{k - 1}^T{y_{k - 1}}}}} \right){s_{k - 1}}s_{k - 1}^T}}{{s_{k - 1}^T{y_{k - 1}}}}} \right){g_k}\\
					& \buildrel \Delta \over =   - {H_k}{g_k}.
				\end{aligned}
			\end{equation}
			
			\noindent   For the symmetric part of  ${H_k}$:
			\begin{equation}\label{eq:symmetrize}
				{\bar H_k} = \frac{{{H_k} + H_k^T}}{2} = I - \frac{{{s_{k - 1}}y_{k - 1}^T + {y_{k - 1}}s_{k - 1}^T}}{{s_{k - 1}^T{y_{k - 1}}}} + \bar t_k\frac{{{s_{k - 1}}s_{k - 1}^T}}{{s_{k - 1}^T{y_{k - 1}}}},
			\end{equation}
			where $\bar t_k = {\tau _k} + \frac{{{{\left\| {{y_{k - 1}}} \right\|}^2}}}{{s_{k - 1}^T{y_{k - 1}}}},$   it is not difficult to verify  that
			\begin{equation}\label{eq:gHg}
				g_k^T{d_k} =  - g_k^T{H_k}{g_k} =  - g_k^T\left( {\frac{{{H_k} + H_k^T}}{2} + \frac{{{H_k} - H_k^T}}{2}} \right){g_k} =  - g_k^T{\bar H_k}{g_k} + 0 =  - g_k^T{\bar H_k}{g_k}.
			\end{equation}
			Now, we only need to    analyze the smallest   eigenvalues  of ${\bar H_k}.$  Rewriting ${\bar H_k}$    as \begin{equation}\label{eq:barHk}
				{\bar H_k} = I - \frac{{\left( {{y_{k - 1}} - \bar t_k{s_{k - 1}}} \right)s_{k - 1}^T}}{{s_{k - 1}^T{y_{k - 1}}}} - \frac{{{s_{k - 1}}y_{k - 1}^T}}{{s_{k - 1}^T{y_{k - 1}}}},
			\end{equation}
			we know that
			\begin{equation}\label{eq:detbarhk}
				\det \left( {{{\bar H}_k}} \right) =  - \frac{{{{\left\| {{y_{k - 1}}} \right\|}^2}{{\left\| {{s_{k - 1}}} \right\|}^2}}}{{{{\left( {s_{k - 1}^T{y_{k - 1}}} \right)}^2}}} + \frac{{\bar t_k{{\left\| {{s_{k - 1}}} \right\|}^2}}}{{s_{k - 1}^T{y_{k - 1}}}} = {\tau _k}\frac{{{{\left\| {{s_{k - 1}}} \right\|}^2}}}{{s_{k - 1}^T{y_{k - 1}}}},
			\end{equation}
			which implies that
			\begin{equation}\label{eq:eigenvalue product}
				{\lambda _{\min }}{\lambda _{\max }} = {\tau _k}\frac{{{{\left\| {{s_{k - 1}}} \right\|}^2}}}{{s_{k - 1}^T{y_{k - 1}}}}.
			\end{equation}
			It follows   from $
			trace\left( {{{\bar H}_k}} \right) = n - 2 + \bar t_k\frac{{{{\left\| {{s_{k - 1}}} \right\|}^2}}}{{s_{k - 1}^T{y_{k - 1}}}} = \left( {n - 2} \right) + {\lambda _{\min }} + {\lambda _{\max }}
			$   that
			\begin{equation}\label{eq:the sum of the eigenvalues}
				{\lambda _{\min }} + {\lambda _{\max }} = \frac{{{{\left\| {{y_{k - 1}}} \right\|}^2}{{\left\| {{s_{k - 1}}} \right\|}^2}}}{{{{\left( {s_{k - 1}^T{y_{k - 1}}} \right)}^2}}} + {\tau _k}\frac{{{{\left\| {{s_{k - 1}}} \right\|}^2}}}{{s_{k - 1}^T{y_{k - 1}}}}.
			\end{equation}
			Combining $ {s_{k - 1}^T{y_{k - 1}}}>0,$ \eqref{eq:eigenvalue product} and \eqref{eq:the sum of the eigenvalues} yields
			
			\begin{equation}  \label{eq:lambdamin}
				\begin{aligned}
					{\lambda _{\min }}& = \frac{{\frac{{{{\left\| {{y_{k - 1}}} \right\|}^2}{{\left\| {{s_{k - 1}}} \right\|}^2}}}{{{{\left( {s_{k - 1}^T{y_{k - 1}}} \right)}^2}}} + {\tau _k}\frac{{{{\left\| {{s_{k - 1}}} \right\|}^2}}}{{s_{k - 1}^T{y_{k - 1}}}} - \sqrt {{{\left( {\frac{{{{\left\| {{y_{k - 1}}} \right\|}^2}{{\left\| {{s_{k - 1}}} \right\|}^2}}}{{{{\left( {s_{k - 1}^T{y_{k - 1}}} \right)}^2}}} + {\tau _k}\frac{{{{\left\| {{s_{k - 1}}} \right\|}^2}}}{{s_{k - 1}^T{y_{k - 1}}}}} \right)}^2} - 4{\tau _k}\frac{{{{\left\| {{s_{k - 1}}} \right\|}^2}}}{{s_{k - 1}^T{y_{k - 1}}}}} }}{2}
					\\& = \frac{{{p_k} + {\gamma_k} - \sqrt {{{\left( {{p_k} + {\gamma_k}} \right)}^2} - 4{\gamma_k}} }}{2}
					\\& >0
					,
				\end{aligned}
			\end{equation}
			where $ p_k $ and $ \gamma_k $ are given by \eqref{eq:pkandgammak}.
			As a result, for any one of $ \tau_k $ in \eqref{eq:sometauk}, we have $ -g_k^Td_k = g_k^T \bar H_k g_k \ge  \lambda_{\min} {\left\| {{g_{k}}} \right\|}^2 >0 $, which implies that \eqref{eq:Suffdier0}.
			
			We next analyze the sufficient descent property of the search direction with different $ \tau_k $ in \eqref{eq:sometauk} when $f$ is uniformly convex.
			
			(a) $ \tau_k = \tau_k^{H}= \frac{{{{\left\| {{y_{k - 1}}} \right\|}^2}}}{{s_{k - 1}^T{y_{k - 1}}}}
			$.   We have that  $  {\gamma_k} =  {\tau _k}\frac{{{{\left\| {{s_{k-1}}} \right\|}^2}}}{{s_{k-1}^T{y_{k-1}}}} = p_k $ and ${\lambda _{\min }} = {p_k} - \sqrt {p_k^2 - {p_k}}.$
			Since
			\[\frac{{d{\lambda _{\min }}}}{{d{p_k}}} = 1 - \frac{{2p_k - 1}}{{2\sqrt {{p_k^2} - p_k} }} < 0,\;\;  \forall p_k >1,\]
			$ \lambda_{\min} $ is monotonically  decreasing in $  \left[ 1, +\infty \right)  $ and thus  $$ \lambda_{\min} > 1/2.$$
			Noted that when  $ \tau_k = \tau_k^{H}= \frac{{{{\left\| {{y_{k - 1}}} \right\|}^2}}}{{s_{k - 1}^T{y_{k - 1}}}}
			$, the sufficient descent property of the search direction is proved without the uniformly convexity condition \eqref{eq:strongconvex0}.

			(b) $ \tau_k = \tau_k^{B}=\frac{{s_{k - 1}^T{y_{k - 1}}}}{{{{\left\| {{s_{k - 1}}} \right\|}^2}}}$. We have that $  {\gamma_k} = \tau_k \frac{{{{\left\| {{s_{k-1}}} \right\|}^2}}}{{s_{k-1}^T{y_{k-1}}}}   = 1 $  and ${\lambda _{\min }} = \frac{{{p_k} + 1 - \sqrt {{{\left( {{p_k} + 1} \right)}^2} - 4} }}{2}.$
			Since
			\begin{equation} \label{eq:lambdamintauB}\frac{{d{\lambda _{\min }}}}{{d{p_k}}} = \frac{1}{2} - \frac{{p_k + 1}}{{2\sqrt {{{\left( {p_k + 1} \right)}^2} - 4} }} < 0,\;\;  \forall p_k >1,\end{equation}
			$ {\lambda _{\min }} $ is monotonically decreasing in $  \left[ 1, +\infty \right)  $. By Assumption \ref{Assumption} (iii) and \eqref{eq:strongconvex0}, we know that
			\begin{equation} \label{eq:pkbound0}
				{{p_k}}=   \left( \frac{{\left\| {{s_{k-1}}} \right\|\left\| {{y_{k-1}}} \right\|}}{{s_{k-1}^T{y_{k-1}}}} \right) ^2\le  \left(\frac{{L{{\left\| {{s_{k-1}}} \right\|}^2}}}{{s_{k-1}^T{y_{k-1}}}}  \right) ^2\le \frac{L^2}{\mu^2 }  .
			\end{equation}
			Therefore,
			\[{\lambda _{\min }} \ge \frac{{{L^2}/{\mu ^2} + 1 - \sqrt {{{\left( {{L^2}/{\mu ^2} + 1} \right)}^2} - 4} }}{2}.\]
			
			(c)  $ \tau_k = \tau_k^{(1)}=1$. We have $  {\gamma_k} = \tau_k \frac{{{{\left\| {{s_{k-1}}} \right\|}^2}}}{{s_{k-1}^T{y_{k-1}}}}   = \frac{{{{\left\| {{s_{k-1}}} \right\|}^2}}}{{s_{k-1}^T{y_{k-1}}}}   $ and ${\lambda _{\min }} = \frac{{{p_k} + {\gamma _k} - \sqrt {{{\left( {{p_k} + {\gamma _k}} \right)}^2} - 4{\gamma _k}} }}{2}$.  Since
			\[\frac{{\partial {\lambda _{\min }}}}{{\partial {p_k}}} = \frac{1}{2} - \frac{{{p_k} + {\gamma _k}}}{{2\sqrt {{{\left( {{p_k} + {\gamma _k}} \right)}^2} - 4{\gamma _k}} }},\]
			$ {\lambda _{\min }} $ is monotonically decreasing with respect to $p_k$.
			
			It follows from \eqref{eq:pkbound0}  that $p_k  \le L^2 \gamma _k^2 $. Let  ${\bar L}$ be any    value such that  $\left( {\gamma _k  + L^2 \gamma _k^2 } \right)^2  - 4\gamma _k  \ge 0$.  Thus,   $p_k  \le \max \left\{ {L^2 ,\bar L^2 } \right\}\gamma _k^2  \buildrel \Delta \over = \tilde L^2 \gamma _k^2 $. As a result,
			\begin{equation}\label{eq:lambdamintauq}
				\lambda _{\min }   \ge \frac{{\gamma _k  + \tilde L^2 \gamma _k^2  - \sqrt {\left( {\gamma _k  + \tilde L^2 \gamma _k^2 } \right)^2  - 4\gamma _k } }}{2} \buildrel \Delta \over = \bar \phi \left( {\gamma _k } \right) .
			\end{equation}
			It follows from  \eqref{eq:strongconvex0}   that $ \gamma _k  \le \frac{1}{\mu}  $.
			It is not difficult to verify that $\bar \phi '\left( {\gamma _k } \right) < 0$, which implies that    $\bar \phi \left( {\gamma _k } \right)$ is monotonically decreasing in $ \left(0,\frac{1}{\mu} \right]  $  and
			\[{\lambda _{\min }} \ge \bar \phi \left( {\frac{1}{\mu }} \right) = \frac{{1/\mu  + {{\tilde L}^2}/{\mu ^2} - \sqrt {{{\left( {{1/\mu } + {{\tilde L}^2}{/\mu ^2}} \right)}^2} - 4/\mu } }}{2}.\]
			%
			
			
			In conclusion, for any one of $ \tau_k $ in \eqref{eq:sometauk}, there must exists $   c>0 $ such that $ {\lambda _{\min }} \ge   c $, which implies that   $$  g_k^Td_k = -g_k^T \bar H_k g_k \le  -\lambda_{\min} {\left\| {{g_{k}}} \right\|}^2 \le -  c {\left\| {{g_{k}}} \right\|}^2. $$
			It completes the proof.  \qed

		\end{proof}

		Powell \cite{Powellexample1984} constructed a counterexample showing that the PRP method with
		exact line search may not converge for general nonlinear functions. It follows from  Remark \ref{Remark1} that Powell's example can also be used to show that the
		method \eqref{eq:CGform} and \eqref{eq:direction1}  with any one of $ \tau_k $ in \eqref{eq:sometauk}  may not converge for general  nonlinear functions.  Therefore,   motivated the truncation form in  \cite{Dai2013A}, we     truncate similarly $ v_k $ in  \eqref{eq:vk1} as
		\begin{equation} \label{eq:Tru-vketak}{{\bar v}_k} = \max\left\lbrace v_k, \eta_k \right\rbrace, \;\; \end{equation}
	 where
	 \begin{equation}
	 \label{eq:etakk}  {\eta _k} = -l_k \frac{{\left| g_k^T{s_{k - 1}}\right| }}{{{{\left\| {{s_{k-1}}}\right\|}^2}}},\;\;\;   l_k  = \left\{ {\begin{array}{*{20}{c}}
	 	{\xi_2,\;\;\;\;\;\;\;\;\;\;\;\;\;\;\;\;\;\;\;\;\;\;\;\;\;\;\;\;\;\;\;\;\;\;\;\;\;\;\;\;\;\;\;\;\;\;\;\; \;\;\;\;\;\; \;\;\;\;\;\;\;\;\;\;\;\text{if}\; g_k^T{s_{k - 1}} \le 0,}\\
	 	{\min\left\{  \max \left\{ {{\bar{\bar{\xi_2}}}, - 1 + \left( {1 + {u_k}} \right)/\overline \omega_k  } \right\},\bar \xi_2\right\},\;\;\;\;\;\;\text{otherwise}.}  \;\;\;\;\;
	 	\end{array}} \right.   \end{equation}
	 Here $ 0<\bar \xi_2<1$, and $0< \xi_2<1.  $
			
			When applying the conjugate gradient methods with the exact line search to solve   quadratic minimization problems,     the sequence of the corresponding gradients is orthogonal, namely, $ g_k^T g_{j} = 0, \; 0 \le j \le k-1.$
			For general nonlinear   functions,   one also hope that  $\left|  g_k^T g_{k-1}   \right| $ may not be far from 0. When $\left|  g_k^T g_{k-1}   \right| > \xi \left\| g_k\right\|^2  $, where $0< \xi <1 $, Powell \cite{Powell1977Restart} suggested that   the search direction should be restarted with $ d_k = -g_k $. Powell's restart strategy is quite efficient   and has been used widely in the numerical implementation of conjugate gradient methods.   As a result, if the   condition
			\begin{equation} \label{eq:restart1}  -\eta_1 {\left\| {{g_k}} \right\|}^2 \le   g_k^T g_{k-1}   \le  \eta_2  {\left\| {{g_k}} \right\|}^2,\;\; \;0<\eta_2<1, \;\eta_1>\eta_2
				\end{equation}
			holds, our method will be also restarted with $ -g_k $. It   follows from  \eqref{eq:restart1}  that 	\begin{equation} \label{eq:gy>0}0<\left(1-\eta_2 \right)  \le \dfrac{g_k^T y_{k-1}}{\left\| g_k \right\|^2 } \le\left(1+\eta_1 \right)   .  \end{equation}.

			Therefore, the search direction is summarized below:
			\begin{equation}\label{eq:Truconditon2}{d_k} = \left\{ {\begin{array}{*{20}{c}}
						{{-g_k},\;\;\;\;\;\;\;\;\;\;\;\;\;\;\;\;\;\;{\rm{if}}\;k = 0\; {\rm{or}}\;{{\overline \omega }_k} > {\xi _1}\;{\rm{or}}}\; \eqref{eq:restart1}\; \text{holds},\;\;\;\;\; \;\;\;\\
						{{  u_k}{g_k} + \bar v_k  {s_{k - 1}},\;\; {\rm{otherwise,}}\;\;\;\;\;\;\;\;\;\;\;\;\;\;\;\;\;\;\;\;\;\;\;\; \;\;\;\;\;\;\;\;\;\;\;\;\;\;\;\;\;\;\;\;\;\;}
					\end{array}} \right.\end{equation}
				where
				$ \;{{\bar v}_k}  \;   {\rm{and}}\;{\overline \omega _k}  $ are given by \eqref{eq:Tru-vketak}  and \eqref{eq:wkparalel}, respectively. 

				\begin{lemma} \label{LemmaSuffDesc2} Assume that $ f  $ satisfies Assumption \ref{Assumption} (iii), and  consider the   subspace minimization conjugate gradient methods \eqref{eq:CGform} and \eqref{eq:Truconditon2} with any one of $ \tau_k $ in \eqref{eq:sometauk}.  If $ s_{k-1}^Ty_{k-1}>0 $, then     	    there   exists   $ c>0 $ such that   	
					\begin{align} \label{eq:Suffdier}
						g_k^T{d_k}\le -c {\left\| {{g_k}} \right\|^2}.
					\end{align}
					
				\end{lemma}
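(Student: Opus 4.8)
The plan is to reduce the claim to Lemma \ref{LemmaSuffDesc} by splitting into the cases dictated by the definition \eqref{eq:Truconditon2} of $d_k$ and then handling the one genuinely new case, namely when the truncation $\bar v_k = \eta_k > v_k$ is active. First, if $k=0$ or $\overline\omega_k > \xi_1$ or \eqref{eq:restart1} holds, then $d_k=-g_k$ and $g_k^Td_k = -\|g_k\|^2$, so \eqref{eq:Suffdier} holds with $c=1$. Second, if $d_k = u_kg_k + \bar v_ks_{k-1}$ with $\bar v_k = v_k$ (no truncation), then the search direction coincides with \eqref{eq:direction1} and Lemma \ref{LemmaSuffDesc}, together with the uniformly-convex-free part of its proof in case (a) and the observation that in the non-restart branch \eqref{eq:gy>0} bounds $g_k^Ty_{k-1}/\|g_k\|^2$ from above and below, gives $g_k^Td_k = -g_k^T\bar H_kg_k \le -\lambda_{\min}\|g_k\|^2$ with $\lambda_{\min}$ bounded below by a positive constant. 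I would need to be slightly careful here: in Lemma \ref{LemmaSuffDesc} the lower bound $c>0$ for $\tau_k^B$ and $\tau_k^{(1)}$ used uniform convexity, whereas here we only have \eqref{eq:restart1}; so I would instead bound $g_k^Td_k$ directly from \eqref{eq:g_k^T{d_k}}, using $\overline\omega_k \le \xi_1$, the bound \eqref{eq:gy>0} on $g_k^Ty_{k-1}$, and $s_{k-1}^Ty_{k-1}>0$ to control the middle and last terms against $\|g_k\|^2$.

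The remaining case is $\bar v_k = \eta_k > v_k$. Here I would compute $g_k^Td_k = u_k\|g_k\|^2 + \eta_k\, g_k^Ts_{k-1}$ directly. Using the closed form \eqref{eq:newukvk} for $u_k$, namely $u_k = \frac{1}{1-\overline\omega_k}\big(-1 + \frac{g_k^Ty_{k-1}g_k^Ts_{k-1}}{s_{k-1}^Ty_{k-1}\|g_k\|^2}\big)$, and the definition \eqref{eq:etakk} of $\eta_k = -l_k|g_k^Ts_{k-1}|/\|s_{k-1}\|^2$, the key point is that the sign structure of $l_k$ is arranged precisely so that $\eta_k g_k^Ts_{k-1} = -l_k (g_k^Ts_{k-1})^2/\|s_{k-1}\|^2 \le 0$ regardless of the sign of $g_k^Ts_{k-1}$ (when $g_k^Ts_{k-1}\le 0$ this is immediate; when $g_k^Ts_{k-1}>0$ the branch of \eqref{eq:etakk} keeps $l_k \ge \bar{\bar{\xi_2}}>0$). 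So the $\eta_k$ term is non-positive, and it suffices to show $u_k\|g_k\|^2 \le -c\|g_k\|^2$, i.e.\ $u_k \le -c$. From \eqref{eq:newukvk}, using $\overline\omega_k \le \xi_1 < 1$ and the bound $|g_k^Ty_{k-1}g_k^Ts_{k-1}|/(s_{k-1}^Ty_{k-1}\|g_k\|^2) \le$ (something controlled by \eqref{eq:gy>0} and Cauchy–Schwarz), one checks $u_k < 0$, but in fact the truncation branch for $g_k^Ts_{k-1}>0$ was designed so that $l_k \le -1 + (1+u_k)/\overline\omega_k$ fails only when $u_k$ is already comfortably negative; more directly, I expect the cleanest route is: when $g_k^Ts_{k-1}\le 0$ the cross term $g_k^Ty_{k-1}g_k^Ts_{k-1}$ has a definite sign (it is $\le 0$ by \eqref{eq:gy>0}), so $u_k \le \frac{-1}{1-\overline\omega_k} \le -1$; when $g_k^Ts_{k-1}>0$, the explicit form of $l_k$ in \eqref{eq:etakk} together with $l_k \le -1 + (1+u_k)/\overline\omega_k$ would, if used in the definition of $\eta_k$, give an even stronger descent, but since we only need an upper bound it suffices to combine $u_k\|g_k\|^2 + \eta_k g_k^Ts_{k-1} \le u_k\|g_k\|^2$ with the fact that $\overline\omega_k \le \xi_1$ forces $u_k$ bounded above by a negative constant after inserting \eqref{eq:gy>0} and $(g_k^Ts_{k-1})^2 \le \overline\omega_k\|g_k\|^2\|s_{k-1}\|^2$.

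I would finish by taking $c$ to be the minimum of the constants produced in the three cases, which is a fixed positive number depending only on $\xi_1,\xi_2,\bar\xi_2,\bar{\bar{\xi_2}},\eta_1,\eta_2$ and the choice of $\tau_k$. The main obstacle is the $g_k^Ts_{k-1}>0$ sub-branch of the truncation: there the sign of the cross term in $u_k$ is not immediately helpful and one must verify that the rather intricate definition of $l_k$ in \eqref{eq:etakk} — in particular the floor $\bar{\bar{\xi_2}}$ and the cap $\bar\xi_2$, and the expression $-1+(1+u_k)/\overline\omega_k$ — is exactly calibrated so that $u_k\|g_k\|^2 + \eta_k g_k^Ts_{k-1}$ stays below $-c\|g_k\|^2$. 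I expect the algebra there to mirror the truncation analysis in Dai–Kou \cite{Dai2013A}, so the right strategy is to substitute \eqref{eq:etakk} into $g_k^Td_k$, clear the denominator $\|s_{k-1}\|^2$, and reduce everything to an inequality in the single scalar $\overline\omega_k \in (0,\xi_1]$ plus the bounded quantity $g_k^Ty_{k-1}/\|g_k\|^2$.
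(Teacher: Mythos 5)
Your case structure is right and the easy branches ($d_k=-g_k$; truncation with $g_k^Ts_{k-1}\le 0$; and $\tau_k=\tau_k^H$) go through, but both of the hard sub-cases with $g_k^Ts_{k-1}>0$ have genuine gaps, and in both the missing ingredient is the same: you never actually use the truncation threshold $\eta_k$, which is exactly what the paper's proof is built on. In the truncated branch you drop the term $\eta_k g_k^Ts_{k-1}\le 0$ and try to conclude from ``$u_k\le -c$'', but that is false in general: from \eqref{eq:newukvk}, $u_k=\frac{1}{1-\overline\omega_k}\bigl(-1+\frac{g_k^Ty_{k-1}\,g_k^Ts_{k-1}}{s_{k-1}^Ty_{k-1}\|g_k\|^2}\bigr)$, and under \eqref{eq:gy>0} the cross ratio can be as large as $1+\eta_1>1$ (with $0<g_k^Ts_{k-1}/s_{k-1}^Ty_{k-1}<1$), so $u_k$ can be positive. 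The paper instead keeps the discarded term, writes $g_k^Td_k=-( -u_k+l_k\overline\omega_k)\|g_k\|^2$, and uses the deliberately calibrated piece $-1+(1+u_k)/\overline\omega_k$ inside \eqref{eq:etakk} to get $-u_k+l_k\overline\omega_k\ge 1-\overline\omega_k\ge 1-\xi_1$; the quantity you threw away is precisely the compensation for a possibly positive $u_k$.

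In the untruncated branch with $g_k^Ts_{k-1}>0$ your proposed ``direct bound from \eqref{eq:g_k^T{d_k}} using $\overline\omega_k\le\xi_1$, \eqref{eq:gy>0} and $s_{k-1}^Ty_{k-1}>0$'' cannot work for $\tau_k^B$ or $\tau_k^{(1)}$: those constraints alone do not bound $p_k$ and $\gamma_k$ in \eqref{eq:pkandgammak}, and the best available constant, $\lambda_{\min}$ from \eqref{eq:lambdamin}, tends to $0$ as $p_k\to\infty$. Concretely, with $\|g_k\|=1$, $g_k^Ty_{k-1}=1$, $y_{k-1}$ nearly parallel to $g_k$ and $s_{k-1}$ nearly orthogonal to $g_k$ (say $g_k^Ts_{k-1}=\delta$, $\|s_{k-1}\|\approx 1$, tuned so that $g_k^Ts_{k-1}g_k^Ty_{k-1}/s_{k-1}^Ty_{k-1}\to 1$), all your hypotheses hold while $-g_k^Td_k/\|g_k\|^2=O(\delta^2)$ for $\tau_k=\tau_k^B$; so no uniform $c$ follows from these facts alone. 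What rescues this case in the paper is the defining property of the branch itself, $v_k\ge\eta_k$: multiplying \eqref{eq:v_k and eta_k} by $g_k^Ts_{k-1}/\|g_k\|^2$ and using $0<g_k^Ts_{k-1}/s_{k-1}^Ty_{k-1}<1$ (from Lemma \ref{LemmaSuffDesc}) yields the a priori bound $\gamma_k+p_k\le\xi_0$ in \eqref{eq:***}, which substitutes for the uniform convexity used in Lemma \ref{LemmaSuffDesc} and gives $\lambda_{\min}\ge$ a positive constant for all three choices in \eqref{eq:sometauk}. So the proof is not a routine reduction to Lemma \ref{LemmaSuffDesc} plus sign checks; both hard sub-cases hinge on the two-sided use of the threshold $\eta_k$, which your sketch identifies as ``the main obstacle'' but does not supply.
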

				
				\begin{proof} We prove it in the following cases.
					
					(i) $ d_k = -g_k $. We have $ g_k^T{d_k} =- \left\| {{g_{k  }}} \right\|^2 $.
					

					(ii) $ d_k = u_kg_k +\eta_ks_{k-1}.  $   If $ g_k^Ts_{k-1} \le 0 $, then by \eqref{eq:newukvk} and \eqref{eq:etakk} we have that
					\[\begin{array}{l}
					g_k^T{d_k} = \left( { - {{\left\| {{g_k}} \right\|}^2} + \frac{{g_k^T{y_{k - 1}}g_k^T{s_{k - 1}}}}{{s_{k - 1}^T{y_{k - 1}}}}} \right)/\left( {1 - {{\overline \omega  }_k}} \right) + {l_k }{\overline \omega  _k}{\left\| {{g_k}} \right\|^2}\\
					\;\;\;\;\;\;\;\; =  - \left( {\dfrac{1}{{1 - {{\overline \omega  }_k}}} - {l_k  }{{\overline \omega  }_k} - \dfrac{{g_k^T{y_{k - 1}}g_k^T{s_{k - 1}}}}{{s_{k - 1}^T{y_{k - 1}}{{\left\| {{g_k}} \right\|}^2}}}} \right){\left\| {{g_k}} \right\|^2}.
					\end{array}\]		
			 It follows from \eqref{eq:wkparalel}, $ g_k^Ts_{k-1} \le 0 $ and    \eqref{eq:gy>0} that  $$  \frac{1}{{1 - {{\overline \omega  }_k}}} - {l_k }{\overline \omega  _k} - \frac{{g_k^T{y_{k - 1}}g_k^T{s_{k - 1}}}}{{s_{k - 1}^T{y_{k - 1}}{{\left\| {{g_k}} \right\|}^2}}} \ge \frac{1}{{1 - {{\overline \omega  }_k}}} -    {l_k }{\overline \omega  _k} \ge 1 -\xi_1.$$ Therefore, when if $g_k^Ts_{k-1}\ge 0$, we obtain that
						$$ g_k^T{d_k} \le -(1 -\xi_1) \left\| {{g_k}} \right\|^2. $$
			 	If $ g_k^Ts_{k-1} > 0 $, then
				\[\begin{array}{l}
				g_k^T{d_k} = {u_k}{\left\| {{g_k}} \right\|^2} - {l_k  }{\overline \omega  _k}{\left\| {{g_k}} \right\|^2}\\
				\;\;\;\;\;\;\;\; =  - \left( { - {u_k} + l_k {{\overline \omega  }_k}} \right){\left\| {{g_k}} \right\|^2}.
				\end{array}\]
	According to \eqref{eq:etakk}, we can easily have  that $ - {u_k} + l_k {\overline \omega  _k} \ge 1 - {\overline \omega  _k} \ge 1 - {\xi _1} $. Therefore, when $g_k^Ts_{k-1}> 0$, we have that
		$$ g_k^T{d_k} \le -\left(  1 - {\xi _1} \right)  \left\| {{g_k}} \right\|^2. $$
		 			
					
					(iii)  $ d_k =   u_k g_k +   v_k s_{k - 1} $. According to \eqref{eq:newukvk} and  \eqref{eq:Truconditon2}, we obtain that
					\begin{equation}\label{eq:v_k and eta_k}
						\begin{aligned}
							v_k
							&= \frac{{1 - 2\overline \omega _k }}{{1 - \overline \omega _k }}\frac{{g_k^T y_{k - 1} }}{{s_{k - 1}^T y_{k - 1} }} - \left( {\tau _k  + \frac{{\left\| {y_{k - 1} } \right\|^2 }}{{s_{k - 1}^T y_{k - 1} }}} \right)\frac{{g_k^T s_{k - 1} }}{{s_{k - 1}^T y_{k - 1} }} + \frac{1}{{1 - \overline \omega _k }}\frac{{g_k^T s_{k - 1} }}{{\left\| {s_{k - 1} } \right\|^2 }} \\
							&\ge -l_k \frac{{ \left| g_k^T s_{k - 1} \right|  }}{{\left\| {s_{k - 1} } \right\|^2 }}.
						\end{aligned}
					\end{equation}
					
					 If $g_k^T s_{k - 1}  \le 0$, then from \eqref{eq:g_k^T{d_k}}  and \eqref{eq:gy>0} we know that   $g_k^T d_k  \le  - \left\| {g_k } \right\|^2 $. So we only need to consider the case of  $g_k^T s_{k - 1}  > 0$. Multiplying both sides of  \eqref{eq:v_k and eta_k} by $\frac{{g_k^T s_{k - 1} }}{{\left\| {g_k } \right\|^2 }}$ yields
					$$
					\frac{{1 - 2\overline \omega _k }}{{1 - \overline \omega _k }}\frac{{g_k^T y_{k - 1} }}{{s_{k - 1}^T y_{k - 1} }}\frac{{g_k^T s_{k - 1} }}{{\left\| {g_k } \right\|^2 }} - \left( {\tau _k \frac{{\left\| {s_{k - 1} } \right\|^2 }}{{s_{k - 1}^T y_{k - 1} }} + \frac{{\left\| {y_{k - 1} } \right\|^2 \left\| {s_{k - 1} } \right\|^2 }}{{\left( {s_{k - 1}^T y_{k - 1} } \right)^2 }}} \right)\frac{{\left( {g_k^T s_{k - 1} } \right)^2 }}{{\left\| {g_k } \right\|^2 \left\| {s_{k - 1} } \right\|^2 }}+  \frac{\overline \omega _k}{{1 - \overline \omega _k }}   \ge -l_k \frac{{\left( {g_k^T s_{k - 1} } \right)^2 }}{{\left\| {g_k } \right\|^2 \left\| {s_{k - 1} } \right\|^2 }}.
					$$
					According to \eqref{eq:pkandgammak}, we have that
					\begin{align}\label{eq:**}
						\frac{{\frac{1}{{\overline \omega _k }} - 2}}{{\frac{1}{{\overline \omega _k }} - 1}}\frac{{g_k^T y_{k - 1} }}{{\left\| {g_k } \right\|^2 }}\frac{{g_k^T s_{k - 1} }}{{s_{k - 1}^T y_{k - 1} }} \ge \gamma _k  + p_k  - \frac{1}{{1 - \overline \omega _k }}  - l_k.
					\end{align}
					It follows from \eqref{eq:Suffdier0} in Lemma \ref{LemmaSuffDesc} and $ {g_k^T s_{k - 1} }>0  $ that $0 < \frac{{g_k^T s_{k - 1} }}{{s_{k - 1}^T y_{k - 1} }} < 1$. It follows from \eqref{eq:wkparalel} and \eqref{eq:**} that
					\begin{align}\label{eq:***}
						p_k  \le \gamma _k  + p_k  \le 1+ \eta_1 + l_k  + \frac{1}{{1 - \overline \omega _k }} \le  1+\eta_1 +\bar \xi_2 + \frac{1}{{1 - \xi _1 }} \buildrel \Delta \over = \xi _0. 	
					\end{align}
					
					We next derive the   conclusion for any one of  $ \tau_k $ in \eqref{eq:sometauk} based on  \eqref{eq:lambdamin} as follows.
					
					(a) $\tau _k  = \tau _k^H  =\frac{{\left\| {y_{k - 1} } \right\|^2 }}{{s_{k - 1}^T y_{k - 1} }}$. We have that   $ g_k^T{d_k} \le -0.5 \left\| {{g_{k  }}} \right\|^2 $ by Lemma \ref{LemmaSuffDesc}.

					(b)  $\tau _k  = \frac{{s_{k - 1}^T y_{k - 1} }}{{\left\| {s_{k - 1} } \right\|^2 }}$. According to Lemma \ref{LemmaSuffDesc}, we know that  $\lambda _{\min }   = \frac{{1 + p_k  - \sqrt {\left( {p_k  - 1} \right)\left( {p_k  + 3} \right)} }}{2}$ and $ {\lambda _{\min }} $ is monotonically decreasing in $  \left[ 1, +\infty \right)  $. Combining with \eqref{eq:***}, we obtain
					$$
					\lambda _{\min }  \ge  \frac{{1 + \xi _0  - \sqrt {\left( {\xi _0  - 1} \right)\left( {\xi _0  + 3} \right)} }}{2}      > 0.
					$$
					
					(c) $\tau _k  = 1$. According to Lemma \ref{LemmaSuffDesc}, we know that $$\lambda _{\min }   = \frac{{\gamma _k  + p_k  - \sqrt {\left( {\gamma _k  + p_k } \right)^2  - 4\gamma _k } }}{2}.$$
					Since $\frac{{\partial {\lambda _{\min }}}}{{\partial {p_k}}} =   {1 - \frac{{\gamma _k  + p_k }}{{\sqrt {\left( {\gamma _k  + p_k } \right)^2  - 4\gamma _k } }}}   < 0$,   $\lambda _{\min } $ is monotonically decreasing with respect to $p_k$ in $  \left[ 1, +\infty \right)  $. Similarly to Lemma \ref{LemmaSuffDesc}, we can obtain
					$$
					\lambda _{\min }   \ge \frac{{\gamma _k  + \tilde L^2 \gamma _k^2  - \sqrt {\left( {\gamma _k  + \tilde L^2 \gamma _k^2 } \right)^2  - 4\gamma _k } }}{2} \buildrel \Delta \over = \bar \phi \left( {\gamma _k } \right) ,
					$$
					where $ \tilde L $ is the same as that in\eqref{eq:lambdamintauq}.
					It is not difficult to verify that $\bar \phi '\left( {\gamma _k } \right) < 0$, which, together with $ \gamma_k \le \xi_0 $ implied by \eqref{eq:***},  yields that    $\bar \phi \left( {\gamma _k } \right)$ is monotonically decreasing and $$\lambda _{\min }  \ge \bar \phi \left( {\xi _0 } \right)= \frac{{{\xi _0} + {{\tilde L}^2}\gamma _k^2 - \sqrt {{{\left( {{\xi _0} + {{\tilde L}^2}\xi _0^2} \right)}^2} - 4{\xi _0}} }}{2} > 0 .$$
					
					In conclusion, for any one of $ \tau_k $ in \eqref{eq:sometauk}, there must exists $ c>0 $ such that $ {\lambda _{\min }} \ge   c $, which together with \eqref{eq:gHg} implies that
					$$  g_k^Td_k = -g_k^T \bar H_k g_k \le  -\lambda_{\min} {\left\| {{g_{k-1}}} \right\|}^2 \le -  c {\left\| {{g_{k-1}}} \right\|}^2.$$
					It completes the proof. \qed			
				\end{proof}	
				
				\subsection{Adaptive  choice of $ \tau_k $}
				
				The choice of $ \tau_k  $ is also  crucial to the search direction  \eqref{eq:Truconditon2}.
			   $ \tau_k $ is given by the following observation. From Theorem \ref{thm:Thorem1}, we know that the SMCG method \eqref{eq:CGform} and \eqref{eq:direction0}  with $ \tau_k   =1 $ and  without any line search  expect the first Cauchy steepest descent  iteration can enjoy the finite termination property when  the objective function $ f $ is 2 dimensional strictly convex quadratic function. Therefore, the choice of  $ \tau_k  =1 $ may be preferred in some cases.  
			   
			     According to \cite{Yuan1991A,Dai2002Modified},
				\begin{align} \label{eq:QuadraticUk}	{  \mu _{k}} = \left| {\frac{{2\left( {{   f_{k - 1}} - {  f_{k}} +   g_k^T{  s_{k - 1}}} \right)}}{{  s_{k - 1}^T{  y_{ k - 1}}}} - 1} \right|  \end{align}
				\noindent is  a   quantity measuring
				how $  f   $ is close to a quadratic on the line segment between $  x_{k-1}  $ and $  x_{k} $.
				If the following  condition \cite{Liu2017AnNA,Dai2002Modified} holds,   namely,   	
				\begin{align} \label{eq:judgequa2}{\mu _k} \le { \xi_3 } \;\;\;\;  \text{or}  \;\;\;\;  \max \left\{ {{\mu _k},{\mu _{k - 1}}} \right\} \le {\xi_4 },
				\end{align}
				where  $\xi_3 \; \text{and}\;\xi_4  $ are   small positives and $\xi_3 <\xi_4 $,   $   f  $ might be very close to a quadratic function on the line
				segment between $  x_{{k}-1}  $ and $  x_{k} $. Therefore, if \eqref{eq:judgequa2} and   the following condition
				\begin{align} \label{eq:TauCon} {{{\left\| {{g_k}} \right\|}^{\rm{2}}} \le {\rm{\xi_6}}\;{\rm{or}}\;\left( {{{\left\| {{g_k}} \right\|}^{\rm{2}}}{\rm{> \xi_6}}\;{\rm{and}}\;{{\left\| {{s_{k - 1}}} \right\|}^2} \le \xi_5 } \right)}
				\end{align}
				hold, then the search direction \eqref{eq:d(PS)}  should not be scaled, namely, $ \tau_k =1 $. Here, $ \xi_5,\;\xi_6,  >0.   $ It is  noted that the condition \eqref{eq:TauCon} means that the current iterative point $ x_k $ is close to the stationary point or close to the latest iterative point $ x_{k-1} $.
				
				Therefore,  $ \tau_k  $ is given by
				\begin{align} \label{eq:Trutauk}{\tau _k} = \left\{ {\begin{array}{*{20}{c}}
							{1,\;\;\;\;  {\rm{if}}\;\eqref{eq:judgequa2}\;\;{\rm{and}}\;  \eqref{eq:TauCon}\; \; {\rm{hold}},\;}\\
							{\tau _k^B,\;\; {\rm{otherwise}},}\;\;\;\;\;\;\;\;\;\;\;\;\;\;\;\;\; \;\;\;\;\;\;\; \;
						\end{array}} \right.\end{align}
	where $\tau _k^B$ is given by \eqref{eq:sometauk} 				
					
					\subsection{ The initial stepsize and the improved Wolfe line search  }
					
					It is universally accepted  that the choice of initial stepsize is of great importance
					for a line search method. Unlike general quasi-Newton methods, it is challenging to
					determine a suitable initial stepsize for a SMCG method. The initial stepsize in our method  is also similar to Algorithm 3.1 in \cite{Dai2013A}, and   the main difference lies in that   we replace
					\begin{equation} \label{eq:alpha0}
						\alpha _k^{\left( 0 \right)} = \max \left\{ {\varphi {\alpha _{k - 1}}, - 2\left| {{f_k} - {f_{k - 1}}} \right|/g_k^T{d_k}} \right\}
					\end{equation}
					in Step 1 of  Algorithm 3.1 in \cite{Dai2013A}   by
					\[\bar \alpha _k^{\left( 0 \right)} = \left\{ {\begin{array}{*{20}{c}}
						{ \alpha _k^{\left( 0 \right)},\;\;\;\;\;\;\;\;\;\;\;\;\;\;\;\text{if}\;{d_k} =  - {g_k}},\;\\
						{\min\left\{ {1,  \alpha _k^{\left( 0 \right)}} \right\},\;\text{if}\;{d_k} \ne  - {g_k}},
						\end{array}} \right.\]
					where $\alpha _k^{\left( 0 \right)} $ is given by \eqref{eq:alpha0}. The motivation behind is that the search direction \eqref{eq:direction0} is closest to the search direction  of the memoryless quasi-Newton method, which usually prefers the unit stepsize 1.
					
					The improved Wolfe line search proposed by Dai and Kou \cite{Dai2013A} is an quite efficient Wolfe line search, which can   avoid some
					numerical drawbacks of the original Wolfe line search. It aims to find the stepsize satisfying the following conditions:
					\begin{align}
						f\left( {{x_k} + {\alpha _k}{d_k}} \right)   \le f\left( {{x_k}} \right) + \min \left\{ {{\epsilon}\left| {f\left( {{x_k}} \right)} \right|,\delta \alpha_k g_k^T{d_k} + {\bar \eta _k}} \right\} ,
						\label{eq:Daiimprlinsear1} \\
						g{\left( {{x_k} + {\alpha _k}{d_k}} \right)^T}{d_k}  \ge \sigma g_k^T{d_k}.
						\label{eq:Daiimprlinsear2}
					\end{align}
					where $0 < \epsilon   $, $ 0 < \delta  < \sigma  < 1 $, $0 < \bar \eta_k  $ and $  \sum\limits_{k \ge 0} {{{\overline \eta  }_k}}  <  + \infty   $.	
					The above improved Wolfe line search   is   used in our   method.

					\subsection{The proposed method}
					
					Denote
					\begin{equation}\label{eq:restartPara}
						{r_{k - 1}} = \frac{{2\left( {{f_k} - {f_{k - 1}}} \right)}}{{{{\left( {{g_k} + {g_{k - 1}}} \right)}^T}{s_{k - 1}}}} - 1,\;\;\;{\overline r _{k - 1}} = {f_k} - {f_{k - 1}} - 0.5\left( {g_k^T{s_{k - 1}} + g_k^T{s_k}} \right).\end{equation}
					Similarly  to  the restart condition in \cite{Liu2019An,LiuDaiLiuCGOPT20},     if there are continuously many
					iterations such that  $ r_{k-1} $ or $\overline r_{k-1} $ is close to 0,  our  algorithm is also restarted  with $ -g_k$.
					
					The subspace minimization conjugate gradient method is described in  detail  as follow.
					
					\begin{algorithm}\label{Algorithm1}
						\centering
						\caption{Subspace Minimization Conjugate Gradient Methods (SMCG)}
						\begin{algorithmic}
							\STATE\textbf {Step 0.} Initialization. Given     ${x_0} \in \mathbb {R}^n,$   $\varepsilon  > 0,$  $\delta,$ ${\sigma },  $ $ \epsilon,\; {\epsilon_1}   $, $  {\xi _1},\; {\xi _2},\; {\xi _3},\; {\xi _4},\; {\xi _5},\; {\xi _6}, \text{MaxRestart},\; \text{MinQuad} $.
							Set   $ \text{IterQuad}=0,$  \\
							\quad	\qquad $\;\text{IterRestart}=0,\;k = 0, $\\
							\STATE\textbf {Step 1.} If $\left\| {{g_0}} \right\|_{\infty} \le \varepsilon ,$ then stop. Otherwise, $ d_0=-g_0. $ \\
							\STATE\textbf {Step 2.}  Calculate the stepsize satisfying the improve Wolfe line search \eqref{eq:Daiimprlinsear1} and \eqref{eq:Daiimprlinsear2}.

							\STATE\textbf {Step 3}. Set $ {x_{k + 1}} = {x_k} + {\alpha _k}{d_k}.$ If $\left\| {{g_k}} \right\|_{\infty} \le \varepsilon ,$ then stop.
							
							\STATE\textbf {Step 4}. Update the restart condition.	IterRestart  = IterRestart + 1.
							If $ \left| r_{k-1} \right| \le {\epsilon_1} $ or    $ \left| \bar r_{k-1}  \right| \le \epsilon_1  $\cite{Liu2019An,LiuDaiLiuCGOPT20}, then IterQuad  =  \\
							\quad	\qquad  IterQuad + 1, otherwise
							IterQuad := 0.
							
							\STATE\textbf {Step 5}.  Calculate the search direction. 	
							
							\quad	\textbf{  5.1} Restart.	 If   IterRestart = MaxRestart or (IterQuad = MinQuad and IterQuad  =
							IterRestart), 		set 	$ d_k= - g_k $ and \\
							\quad	\qquad   IterRestart := 0, IterQuad := 0.   Set  $ k=k+1 $ and go to  Step 2. \\	
							\quad	\textbf{  5.2} Compute the search direction $ d_k $ by   \eqref{eq:Truconditon2} with $ \tau_k $ in \eqref{eq:Trutauk}.  Set  $ k=k+1 $ and go to  Step 2. 		
							
						\end{algorithmic}
					\end{algorithm}
					In the SMCG method, IterRestart denote the number of iterations since the last restart. IterQuad  denote the number of continuous iterations such that  $ r_{k-1} $ or $\overline r_{k-1} $ is close to 0.

					
					\section{ Convergence Analysis}
					We will establish the global convergence      of Algorithm 1 for general functions   under   Assumption \ref{Assumption} in the section.
					
					Since Algorithm 1 is restarted with $d_k =-g_k$ at least MaxRestart  iterations,  the global convergence can be obtained easily. So we consider   the global convergence properties of
					Algorithm 1 without the  restart in Step 5.1. In addition, since $ \tau_k $ in    \eqref{eq:Trutauk} chooses adaptively between $ 1 $ and $\tau_k^B $, the   convergence result  based on any one of $ \tau_k $ in \eqref{eq:sometauk}   suffices  to that of Algorithm 1.		
So we   establish the global convergence of the SMCG method \eqref{eq:CGform} and \eqref{eq:Truconditon2} under the Assumption \ref{Assumption}.
					
					According to the   improved Wolfe line search   \eqref{eq:Daiimprlinsear1} and \eqref{eq:Daiimprlinsear2} and Assumption \ref{Assumption} (ii), we  know easily  that \begin{equation}\label{eq:alphabound}\sum\limits_{k = 0}^{ + \infty } { - {\alpha _k}g_k^T{d_k}}  <  + \infty  \;\; \text{and} \;\;   {\alpha _k} \ge \frac{{ - \left( {1 - \sigma } \right)g_k^T{d_k}}}{{L{{\left\| {{d_k}} \right\|}^2}}},\end{equation}
					which implies that
					\begin{equation} \label{Zoutendijk condition}
					\sum\limits_{k = 0}^{ + \infty }  {\frac{{{{\left( {g_k^T{d_k}} \right)}^2}}}{{{{\left\| {{d_k}} \right\|}^2}}}}  <  + \infty .
					\end{equation}
					Together with Lemma \ref{LemmaSuffDesc2}, we obtain
					\begin{equation} \label{eq:Z2condition}
					\sum\limits_{k = 0}^{ + \infty } {\frac{{{{\left\| {{g_k}} \right\|}^4}}}{{{{\left\| {{d_k}} \right\|}^2}}}}  <  + \infty .
					\end{equation}				
					The above inequality is important to analyze the convergence   of the proposed method. 
					
					 The next lemma will be used to  the convergent analysis of  the SMCG method \eqref{eq:CGform} and \eqref{eq:Truconditon2}.
					\begin{lemma}\label{LemmaUkbound}
						Assume $ f $ satisfies Assumption \ref{Assumption},       consider the     subspace minimization conjugate gradient method  \eqref{eq:CGform} and \eqref{eq:Truconditon2} with any one of $ \tau_k $ in \eqref{eq:sometauk}, and $ \alpha_k $ is    calculated by the improved Wolfe line search satisfying \eqref{eq:Daiimprlinsear1} and \eqref{eq:Daiimprlinsear2}. If $\left\| {{g_k}} \right\| \ge \gamma_1  >0$ holds for all $ k\ge 1 $, then 	  	
						\begin{equation} \label{eq:sum uk- uk-1}
							\sum\limits_{k = 0}^\infty  {{{\left\| {{\widetilde u_k} - {\widetilde u_{k - 1}}} \right\|}^2}}  <  + \infty ,
						\end{equation}
						where $   {{\widetilde u_k} = \dfrac{{{d_k}}}{{\left\| {{d_k}} \right\|}}}.   $
					\end{lemma}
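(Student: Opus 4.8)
The plan is to follow the classical Gilbert--Nocedal device for bounding the change of the normalized search directions; the only extra work compared with the plain conjugate gradient case is that here the ``$-g_k$ part'' of $d_k$ is $u_kg_k$ rather than $-g_k$, so one has to control $|u_k|$ uniformly. First I would record the standing consequences of Assumption \ref{Assumption}: from \eqref{eq:Daiimprlinsear1} one gets $f(x_{k+1})\le f(x_k)+\bar\eta_k$, hence every iterate lies in the bounded level set $\mathcal L$, and continuity of $g$ furnishes a constant $\Gamma>0$ with $\|g_k\|\le\Gamma$ for all $k$; together with the hypothesis $\gamma_1\le\|g_k\|$ this pins $\|g_k\|$ between two positive constants. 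The target inequality will be
\begin{equation*}
	\|\widetilde u_k-\widetilde u_{k-1}\|^2\le C\,\frac{\|g_k\|^4}{\|d_k\|^2}\qquad\text{for all }k\text{ and some }C>0,
\end{equation*}
after which \eqref{eq:Z2condition} immediately yields \eqref{eq:sum uk- uk-1}.

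The key estimate is obtained as follows. For each $k\ge1$ write $d_k=u_kg_k+\lambda_kd_{k-1}$, using $s_{k-1}=\alpha_{k-1}d_{k-1}$ to absorb $s_{k-1}$ (so $\lambda_k=\bar v_k\alpha_{k-1}$ in the non-restart branch of \eqref{eq:Truconditon2}, while $u_k=-1,\ \lambda_k=0$ in the branch $d_k=-g_k$). Dividing by $\|d_k\|$ and subtracting $\widetilde u_{k-1}$ gives
\begin{equation*}
	\widetilde u_k-\widetilde u_{k-1}=\frac{u_kg_k}{\|d_k\|}+\left(\frac{\lambda_k\|d_{k-1}\|}{\|d_k\|}-1\right)\widetilde u_{k-1}.
\end{equation*}
By the reverse triangle inequality $\bigl|\,\|\lambda_kd_{k-1}\|-\|u_kg_k+\lambda_kd_{k-1}\|\,\bigr|\le\|u_kg_k\|$, i.e. $\bigl|\lambda_k\|d_{k-1}\|-\|d_k\|\bigr|\le|u_k|\,\|g_k\|$, so the coefficient of $\widetilde u_{k-1}$ has absolute value at most $|u_k|\,\|g_k\|/\|d_k\|$ and hence
\begin{equation*}
	\|\widetilde u_k-\widetilde u_{k-1}\|\le\frac{2|u_k|\,\|g_k\|}{\|d_k\|}.
\end{equation*}
This single inequality covers both branches of \eqref{eq:Truconditon2}, so no separate treatment of the restart steps (nor any sign assumption on $\bar v_k$) is needed.

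It then remains to bound $|u_k|$ uniformly. When $d_k=-g_k$ this is trivial. Otherwise $\overline\omega_k\le\xi_1$ by \eqref{eq:wkparalel}, so in the expression \eqref{eq:newukvk} one has $1/(1-\overline\omega_k)\le1/(1-\xi_1)$; moreover $|g_k^Ty_{k-1}|/\|g_k\|^2\le1+\|g_{k-1}\|/\|g_k\|\le1+\Gamma/\gamma_1$, and the quotient $g_k^Ts_{k-1}/(s_{k-1}^Ty_{k-1})$ is bounded, its upper bound $1$ coming from the descent relation \eqref{eq:Suffdier0} of Lemma \ref{LemmaSuffDesc} (used exactly as in the proof of Lemma \ref{LemmaSuffDesc2}), while a lower bound follows from \eqref{eq:Daiimprlinsear2}, which gives $g_k^Ts_{k-1}\ge\sigma g_{k-1}^Ts_{k-1}$ with $g_{k-1}^Ts_{k-1}<0$, whence $g_k^Ts_{k-1}/(s_{k-1}^Ty_{k-1})\ge-\sigma/(1-\sigma)$. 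Combining, $|u_k|\le M$ for a constant $M$ independent of $k$, and therefore, using $\|g_k\|\ge\gamma_1$,
\begin{equation*}
	\|\widetilde u_k-\widetilde u_{k-1}\|^2\le\frac{4M^2\|g_k\|^2}{\|d_k\|^2}=\frac{4M^2}{\|g_k\|^2}\cdot\frac{\|g_k\|^4}{\|d_k\|^2}\le\frac{4M^2}{\gamma_1^2}\cdot\frac{\|g_k\|^4}{\|d_k\|^2}.
\end{equation*}
Summing over $k$ and invoking \eqref{eq:Z2condition} finishes the proof. The main obstacle is precisely the uniform bound on $|u_k|$, and inside it the control of $g_k^Ts_{k-1}/(s_{k-1}^Ty_{k-1})$ on the set where $g_k^Ts_{k-1}\le0$, which is where the improved Wolfe condition and the descent property must be combined; the remaining ingredients (the factor $1/(1-\overline\omega_k)$, the ratio $g_k^Ty_{k-1}/\|g_k\|^2$, and the reverse-triangle-inequality step) are routine given the earlier lemmas and the boundedness of $\{\|g_k\|\}$.
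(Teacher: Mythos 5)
Your overall strategy (bound the normalized direction change by a constant times $\|g_k\|/\|d_k\|$, bound $|u_k|$ uniformly, then invoke \eqref{eq:Z2condition}) is the same as the paper's, and your uniform bound on $|u_k|$ — via the descent property, the improved Wolfe condition giving $g_k^Ts_{k-1}/(s_{k-1}^Ty_{k-1})\in[-\sigma/(1-\sigma),1]$, and the boundedness of $\|g_k\|$ on the level set — is essentially the paper's estimate \eqref{eq:bounduk}. However, there is a genuine gap in your key inequality. Writing $d_k=u_kg_k+\lambda_kd_{k-1}$ with $\lambda_k=\bar v_k\alpha_{k-1}$, your ``i.e.'' step replaces $\|\lambda_kd_{k-1}\|$ by $\lambda_k\|d_{k-1}\|$, which is only legitimate when $\lambda_k\ge 0$. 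But the truncation \eqref{eq:Tru-vketak} only enforces $\bar v_k\ge\eta_k$ with $\eta_k\le 0$, so $\bar v_k$ (hence $\lambda_k$) can be negative; in that case the coefficient of $\widetilde u_{k-1}$ in your identity equals $\bigl(\lambda_k\|d_{k-1}\|-\|d_k\|\bigr)/\|d_k\|\le -1$, its absolute value is not controlled by $|u_k|\|g_k\|/\|d_k\|$, and the claimed bound $\|\widetilde u_k-\widetilde u_{k-1}\|\le 2|u_k|\|g_k\|/\|d_k\|$ can fail outright: if $|\lambda_k|\,\|d_{k-1}\|\gg\|g_k\|$ with $\lambda_k<0$, then $\widetilde u_k\approx-\widetilde u_{k-1}$, the left side is close to $2$, while the right side is arbitrarily small. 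Your remark that ``no sign assumption on $\bar v_k$ is needed'' is exactly where the argument breaks — indeed, if no sign information or truncation were needed at all, this type of lemma would hold for untruncated directions, which is known to be false in general (this is why the Dai--Kou-type truncation is built into \eqref{eq:Truconditon2} in the first place).

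The paper repairs precisely this point by splitting $\bar v_k=v_k^++v_k^-$ with $v_k^-=\eta_k\le 0$ and $v_k^+=\max\{v_k-\eta_k,0\}\ge 0$, and grouping the small negative part with the gradient term: $\omega_k=(u_kg_k+v_k^-s_{k-1})/\|d_k\|$ and $\delta_k=v_k^+\|s_{k-1}\|/\|d_k\|\ge 0$, so that $\widetilde u_k=\omega_k+\delta_k\widetilde u_{k-1}$ with a \emph{nonnegative} coefficient on $\widetilde u_{k-1}$. Since $\|\widetilde u_k\|=\|\widetilde u_{k-1}\|=1$, one has $\|\omega_k\|=\|\widetilde u_k-\delta_k\widetilde u_{k-1}\|=\|\delta_k\widetilde u_k-\widetilde u_{k-1}\|$ and, using $\delta_k\ge 0$, $\|\widetilde u_k-\widetilde u_{k-1}\|\le 2\|\omega_k\|$. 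The truncation is what makes the extra piece harmless: by \eqref{eq:etakk} and Cauchy--Schwarz, $|v_k^-|\,\|s_{k-1}\|\le l_k\|g_k\|\le\bar\xi_2\|g_k\|$, so $\|\omega_k\|\le(\bar c_1+\bar\xi_2)\|g_k\|/\|d_k\|$ and the summability follows from \eqref{eq:Z2condition} as you intended. So your proof is salvageable, but only by adding exactly this sign-sensitive decomposition (and using the truncation bound on $\eta_k$), which is the step your reverse-triangle-inequality shortcut skips.
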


					\begin{proof}
						~~We first derive a bound for $  {{u_k}}  $ in \eqref{eq:uk1}.
						By    \eqref{eq:Daiimprlinsear2}, Lemma \ref{LemmaSuffDesc2} and $\left\| {{g_k}} \right\| \ge {\gamma _1}$, we have that
						\begin{equation} \label{eq:Boundykdk}
							y_k^T{d_k} \ge  - \left( {1 - \sigma } \right)g_k^T{d_k} \ge c\left( {1 - \sigma } \right){\left\| {{g_k}} \right\|^2} \ge c\gamma _1^2\left( {1 - \sigma } \right)
						\end{equation}
						and
						\begin{equation} \label{eq:Rboundgk1dk}
							g_{k + 1}^T{d_k} \ge \sigma g_k^T{d_k} = \sigma g_{k + 1}^T{d_k} - \sigma y_k^T{d_k}.
						\end{equation}
						It follows from  Lemma \ref{LemmaSuffDesc2}  that
						\begin{equation} \label{eq:Boundgk1dk}
							g_{k + 1}^T{d_k} = y_k^T{d_k} + g_k^T{d_k} < y_k^T{d_k}.
						\end{equation}
						
						\noindent Combining $\sigma  < 1$, \eqref{eq:Rboundgk1dk}  and \eqref{eq:Boundgk1dk} yields that
						\begin{equation} \label{eq:Boundgk1dkgkdk}
							\frac{{\left| {g_{k + 1}^T{d_k}} \right|}}{{y_k^T{d_k}}} \le \max \left\{ {1,\frac{\sigma }{{1 - \sigma }}} \right\}.
						\end{equation}
						
						According to Assumption \ref{Assumption}, we know that there are two positive constants $ D $ and $  \gamma_2 $ such that
						\begin{equation} \label{eq:Dandgamma2}
						D = \max \left\{ {\left\| {y - z} \right\|:y,z \in {\cal L} = \left\{ {x|f\left( x \right) \le f\left( {{x_0}} \right) + \sum\limits_{k \ge 0} {{{\bar \eta }_k}} } \right\}} \right\},\;\;\left\| {{g_k}} \right\| \le {\gamma _2}.
						\end{equation}
						
						It is  note that ${d_k} \ne 0$  for all $ k\ge 1 $, otherwise Lemma \ref{LemmaSuffDesc}  will imply ${g_k} = 0$. It indicates that $\widetilde u_k$ is well defined.
						Therefore, by using  \eqref{eq:wkparalel}, \eqref{eq:newukvk},  \eqref{eq:Boundgk1dkgkdk},  \eqref{eq:Dandgamma2} and \eqref{eq:Lipschitz continuous}, we  obtain	
				\begin{equation} \label{eq:bounduk}
							\left| {{u_k}} \right|  \le \frac{1}{{1 -\overline \omega_k }}\left( {1 + \left| {\frac{{g_k^T{s_{k - 1}}}}{{s_{k - 1}^T{y_{k - 1}}}}} \right|\frac{{ \left| g_k^T{y_{k - 1} }\right|  }}{{{{\left\| {{g_k}} \right\|}^2}}}} \right)
							\le \frac{1}{{1 - {\xi _1}}}\left( {1 + \max \left\{ {1,\left. {\frac{\sigma }{{1 - \sigma }}} \right\} \left(1+\eta_1 \right) } \right.} \right)
							\buildrel \Delta \over = {{\bar c}_1} > 1.
						\end{equation}

						We  divide $ \bar v_k $ in  \eqref{eq:Tru-vketak}
						into the following  two  parts:
						\begin{equation}\label{v_k^ +}
							v_k^ +  = \max \left\{ {\frac{{g_k^T{y_{k - 1}}}}{{s_{k - 1}^T{y_{k - 1}}}} - \left( {{\tau _k} + \frac{{{{\left\| {{y_k}} \right\|}^2}}}{{s_{k - 1}^T{y_{k - 1}}}}} \right)\frac{{g_k^T{s_{k - 1}}}}{{s_{k - 1}^T{y_{k - 1}}}} + \frac{{g_k^T{s_{k - 1}}{{\left\| {{g_k}} \right\|}^2} - \frac{{g_k^T{y_{k - 1}}{{\left( {g_k^T{s_{k - 1}}} \right)}^2}}}{{s_{k - 1}^T{y_{k - 1}}}}}}{{{{{{\left\| {{g_k}} \right\|}^2}{{\left\| {{s_{k - 1}}} \right\|}^2} - {{\left( {g_k^T{s_{k - 1}}} \right)}^2}}}}} - \eta_k  ,0} \right\},
						\end{equation}
						and
						\begin{equation} \label{eq:v_k^-}
							v_k^ -  =\eta_k = -l_k   \frac{{\left| g_k^T{s_{k - 1}}\right| }}{{{{\left\| {{s_{k - 1}}} \right\|}^2}}},
						\end{equation}
						which satisfy $\bar  v_k=v_k^{+} + v_k ^{-} $.  It follows that    the search direction $ d_k=u_kg_k +   \bar v_k s_{k - 1} $ in \eqref{eq:Truconditon2}       can be  rewritten as
						\[{d_k} = { u_k}{g_k} + \left( {   v_k^{+} +  v_k ^{-} }\right) {s_{k - 1}}.\]

						\noindent Denote
						\begin{equation}\label{eq:omegaanddeltak}{\omega _k} = \frac{{  u_k{g_k} +    {  v_k^ - }{s_{k - 1}}}}{{\left\| {{d_k}} \right\|}},\;\; {\delta _k} = \frac{{   v_k^ + \left\| {{s_{k - 1}}} \right\|}}{{\left\| {{d_k}} \right\|}}. \end{equation}
						Thus, $\widetilde {u}_k$ can be rewritten as
						\begin{equation} \label{{u_k2}}
							{\widetilde u_k} = \frac{{{d_k}}}{{\left\| {{d_k}} \right\|}} = {\omega _k} + {\delta _k}{ \widetilde u_{k - 1}}.
						\end{equation}
						Using the identity $\left\| {{\widetilde u_k}} \right\| = \left\| {{\widetilde u_{k - 1}}} \right\| = 1$, we get that
						\begin{equation} \label{omega _k2}
							\left\| {{\omega _k}} \right\| = \left\| {{\widetilde u_k} - {\delta _k}{\widetilde u_{k - 1}}} \right\| = \left\| {{\delta _k}{\widetilde u_k} - {\widetilde u_{k - 1}}} \right\|.
						\end{equation}
						It follows from ${\delta _k} \ge 0$,  the triangle inequality and \eqref{omega _k2} that
						\begin{equation} \label{u_k-u_{k - 1}1}
							\begin{aligned}
								\left\| {{\widetilde u_k} - {\widetilde u_{k - 1}}} \right\|
								&\le \left\| {\left( {1 + {\delta _k}} \right){\widetilde u_k} - \left( {1 + {\delta _k}} \right){\widetilde u_{k - 1}}} \right\|\\
								& \le \left\| {{\widetilde u_k} - {\delta _k}{\widetilde u_{k - 1}}} \right\| + \left\| {{\delta _k}{\widetilde u_k} - {\widetilde u_{k - 1}}} \right\|\\
								& = 2\left\| {\omega _k}\right\|.
							\end{aligned}
						\end{equation}
						
						\noindent By \eqref{eq:Truconditon2}, \eqref{eq:etakk}, \eqref{eq:bounduk} and \eqref{eq:v_k^-}, we can obtain that
						\begin{equation} \label{eq:uvgs}
							\left\| {{   u_k}{g_k} +    v_k^ - {s_{k - 1}}} \right\| \le { \left|   u_k \right|}\left\| {{g_k}} \right\| + \left| {v_k^ - } \right|\left\| {{s_{k - 1}}} \right\| \le   \left( {{ \bar c_1} + \bar \xi_2 } \right) \left\| {{g_k}} \right\| .
						\end{equation}

						Combining  \eqref{u_k-u_{k - 1}1},   \eqref{eq:omegaanddeltak} and \eqref{eq:uvgs} yields
						\begin{equation} \label{eq:uk-uk-1Inequa}
							\left\| {{\widetilde u_k} - {\widetilde u_{k - 1}}} \right\| \le 2\left\| {{\omega _k}} \right\| \le 2\left( {{ \bar c_1} + \bar \xi_2 } \right)\frac{{\left\| {{g_k}} \right\|}}{{\left\| {{d_k}} \right\|}}.
						\end{equation}
						Similarly, for the search direction $ d_k= -g_k $ in   \eqref{eq:Truconditon2}, we can easily obtain   \eqref{eq:uk-uk-1Inequa} by setting $ u_k=-1$, $v_k^{+} = v_k^{-} =0$ in  \eqref{eq:omegaanddeltak} due to $ \bar c_1 >1 $.
						Therefore, together with \eqref{eq:Z2condition} and  $\left\| {{g_k}} \right\| \ge \gamma_1 , $ we have
						\begin{equation} \label{sum limits}
							\sum\limits_{k = 0}^\infty  {{{\left\| {{\widetilde u_k} - {\widetilde u_{k - 1}}} \right\|}^2}}  \le \frac{{4{{\left( {{ \bar c_1} + \bar \xi_2 } \right)}^2}}}{{\gamma _1^2}}\sum\limits_{k = 0}^\infty  {\frac{{{{\left\| {{g_k}} \right\|}^4}}}{{{{\left\| {{d_k}} \right\|}^2}}}}  <  + \infty,
						\end{equation}
						which completes the proof. \qed
					\end{proof}
					
					The global convergence  is established under Assumption \ref{Assumption} in the following theorem.

					\begin{theorem} \label{thm:31}
						Assume $ f $ satisfies Assumption \ref{Assumption},    consider the     subspace minization conjugate gradient method  \eqref{eq:CGform} and \eqref{eq:Truconditon2} with any one of $ \tau_k $ in \eqref{eq:sometauk}, and $ \alpha_k $ is    calculated by the improved Wolfe line search satisfying \eqref{eq:Daiimprlinsear1} and \eqref{eq:Daiimprlinsear2}.      Then,
						\begin{equation} \label{eq:liminf}
							\mathop {\lim \inf }\limits_{k \to \infty } \left\| {{g_k}} \right\| = 0.
						\end{equation}
					\end{theorem}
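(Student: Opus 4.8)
The plan is to argue by contradiction in the standard Zoutendijk style, exploiting the key inequality \eqref{eq:Z2condition} together with the ``bounded turning'' estimate from Lemma \ref{LemmaUkbound}. Suppose \eqref{eq:liminf} fails; then there exists $\gamma_1>0$ with $\|g_k\|\ge\gamma_1$ for all $k$. The goal is then to show $\sum_k \|g_k\|^4/\|d_k\|^2 = +\infty$, contradicting \eqref{eq:Z2condition}. Since $\|g_k\|$ is bounded below and (by Assumption~\ref{Assumption} and \eqref{eq:Dandgamma2}) above, this reduces to proving $\sum_k 1/\|d_k\|^2 = +\infty$, i.e. that $\|d_k\|$ cannot grow too fast on average.

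First I would record, as in the proof of Lemma~\ref{LemmaUkbound}, that $d_k = u_k g_k + \bar v_k s_{k-1}$ with $|u_k|\le \bar c_1$ and $|v_k^-|\le\bar\xi_2$, so the ``bad'' growth of $d_k$ can only come from the $v_k^+ s_{k-1}$ term; write $d_k = (u_k g_k + v_k^- s_{k-1}) + v_k^+ s_{k-1}$ and use $\|s_{k-1}\| = \alpha_{k-1}\|d_{k-1}\|$. The second step is the classical trick: using the decomposition \eqref{{u_k2}}, $\widetilde u_k = \omega_k + \delta_k \widetilde u_{k-1}$ with $\delta_k\ge 0$ and $\|\widetilde u_k\|=\|\widetilde u_{k-1}\|=1$, one gets $\delta_k \|d_k\| = \|v_k^+\|\,\|s_{k-1}\| = |v_k^+|\alpha_{k-1}\|d_{k-1}\| \le \|d_{k-1}\| + \|u_k g_k + v_k^- s_{k-1}\|$ after rearranging; more usefully, bounding $\|d_k\|^2 \le 2\|u_k g_k + v_k^- s_{k-1}\|^2 + 2(v_k^+)^2\|s_{k-1}\|^2$ and iterating. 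The cleaner route is to combine \eqref{eq:uk-uk-1Inequa}, namely $\|\widetilde u_k - \widetilde u_{k-1}\| \le 2(\bar c_1+\bar\xi_2)\|g_k\|/\|d_k\|$, with Lemma~\ref{LemmaUkbound}: $\sum_k\|\widetilde u_k - \widetilde u_{k-1}\|^2 < +\infty$.

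From $\sum_k\|\widetilde u_k - \widetilde u_{k-1}\|^2<+\infty$ one deduces that the unit vectors $\widetilde u_k$ change slowly; concretely, for any fixed $\Delta$ there is $k_0$ such that $\sum_{i=k}^{k+\Delta}\|\widetilde u_i - \widetilde u_{i-1}\|^2 \le 1/4$ for all $k\ge k_0$, hence $\|\widetilde u_i - \widetilde u_j\| \le 1/2$ for $k_0\le k\le i,j\le k+\Delta$ (Cauchy–Schwarz on the telescoping sum). Then I would bound $\|s_k\|$ from below using the step relation and the line search: combining $\alpha_k \ge -(1-\sigma)g_k^Td_k/(L\|d_k\|^2)$ from \eqref{eq:alphabound} with the sufficient descent $g_k^Td_k\le -c\|g_k\|^2$ (Lemma~\ref{LemmaSuffDesc2}) and $\|g_k\|\ge\gamma_1$ gives $\alpha_k\|d_k\| = \|s_k\| \ge c(1-\sigma)\gamma_1^2/(L\|d_k\|)$. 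Writing $g_k = \|d_k\|\widetilde u_k \cdot$(coefficient) — more precisely, using $-\|g_k\|^2 \ge \frac{1}{c}g_k^Td_k = \frac1c \|d_k\| g_k^T\widetilde u_k$ — and summing $x_{k+\Delta}-x_k = \sum_{i=k+1}^{k+\Delta}\alpha_i d_i = \sum \|s_i\|\widetilde u_i$, the near-constancy of $\widetilde u_i$ forces $\|x_{k+\Delta}-x_k\|\ge \frac12\sum_{i}\|s_i\|$, which is bounded by the diameter $D$ of the level set \eqref{eq:Dandgamma2}. This yields $\sum_{i=k+1}^{k+\Delta}\|s_i\| \le 2D$ on each block, and combined with $\|s_i\|\ge c(1-\sigma)\gamma_1^2/(L\|d_i\|)$ and the Cauchy–Schwarz inequality $\Delta^2 \le \big(\sum_i 1/\|d_i\|\big)\big(\sum_i \|d_i\|\big)$...

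The decisive step — and the main obstacle — is turning these block estimates into the divergence $\sum_k 1/\|d_k\|^2 = +\infty$. The standard Dai–Kou/Hager–Zhang argument runs: on each block of length $\Delta$, either $\sum \|d_i\|^2$ is large (many indices with $\|d_i\|$ large), or $\sum 1/\|d_i\|^2$ is large; a careful bookkeeping using $\sum_i\|s_i\|\le 2D$, $\|s_i\| = \alpha_i\|d_i\|$, and the lower bound on $\alpha_i\|d_i\|$ shows $\sum_{i}\|d_i\|^2$ over a block is controlled, and then $\Delta^2 \le \big(\sum_{i}\|s_i\|\|d_i\|^2\big)\big(\sum_i \frac{1}{\|s_i\|\|d_i\|^2}\big)$ combined with $\|d_i\|^2 = \|s_i\|^2/\alpha_i^2$ and an upper bound on $\alpha_i$ (from the Armijo-type condition \eqref{eq:Daiimprlinsear1} and boundedness of $f$) delivers $\sum_{\text{block}} 1/\|d_i\|^2 \ge \text{const}\cdot\Delta$ for suitable $\Delta$. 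Letting $\Delta$ vary gives $\sum_k 1/\|d_k\|^2=+\infty$ over infinitely many disjoint blocks, hence $\sum_k\|g_k\|^4/\|d_k\|^2 \ge \gamma_1^4\sum_k 1/\|d_k\|^2 = +\infty$, contradicting \eqref{eq:Z2condition}. I expect the bookkeeping in choosing $\Delta$ and splitting the block indices into ``$\|d_i\|$ large'' versus ``$\|d_i\|$ small'' to be the technically delicate part; everything else is assembling estimates already proved in Lemmas~\ref{LemmaSuffDesc2} and \ref{LemmaUkbound}.
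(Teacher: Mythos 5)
Your skeleton matches the paper's proof up to a point: contradiction with $\|g_k\|\ge\gamma_1$, the Zoutendijk-type bound \eqref{eq:Z2condition}, Lemma~\ref{LemmaUkbound} on the slowly varying unit directions $\widetilde u_k$, and the block estimate $\sum_{j=k}^{l-1}\|s_j\|\le 2D$ for $l-k\le\Delta$ are exactly the paper's ingredients and its step (ii). But the decisive final step in your sketch is both missing and pointed in the wrong direction. The paper does \emph{not} try to prove $\sum_k 1/\|d_k\|^2=+\infty$ by a large/small-$\|d_i\|$ bookkeeping; it proves that $\|d_l\|$ is \emph{uniformly bounded}. This rests on its step (i), which you never establish: under $\|g_k\|\ge\gamma_1$, the Wolfe condition gives $s_{k-1}^Ty_{k-1}\ge c(1-\sigma)\gamma_1^2$, and together with Lipschitz continuity and \eqref{eq:Boundgk1dkgkdk} this yields $|v_k|\le\bar c_2\|d_{k-1}\|$ and $|\eta_k|\le\bar{\bar c}_2\|d_{k-1}\|$ (eqs.\ \eqref{eq:|bk|}--\eqref{eq:etabound}). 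That bound is the linchpin: it gives the recursion $\|d_l\|^2\le 2\bar c_1^2\gamma_2^2+2\bar c_3^2\|s_{l-1}\|^2\|d_{l-1}\|^2$, and then the AM--GM inequality combined with $\sum\|s_j\|\le 2D$ over blocks of length $\Delta\ge 4\bar c_3 D$ shows that products of $\Delta$ consecutive factors $2\bar c_3^2\|s_j\|^2$ are at most $2^{-\Delta}$, so $\|d_l\|\le c_3$ for all $l$, and $\sum\|g_k\|^4/\|d_k\|^2\ge\gamma_1^4\sum 1/c_3^2=+\infty$ contradicts \eqref{eq:Z2condition}.

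Your proposed replacement for this step does not work as described. From $\|s_i\|\ge c(1-\sigma)\gamma_1^2/(L\|d_i\|)$ and $\sum_{i\in\text{block}}\|s_i\|\le 2D$ you can only conclude $\sum_{i\in\text{block}}1/\|d_i\|\le 2DL/\bigl(c(1-\sigma)\gamma_1^2\bigr)$, i.e.\ an \emph{upper} bound saying $\|d_i\|$ is rarely small; this gives no upper control on $\|d_i\|$ and cannot deliver the claimed lower bound $\sum_{\text{block}}1/\|d_i\|^2\ge\mathrm{const}\cdot\Delta$. Likewise, the improved Wolfe condition \eqref{eq:Daiimprlinsear1} does not furnish a usable upper bound on $\alpha_i$ of the kind your Cauchy--Schwarz manipulation would need. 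Without an estimate of the form $|\bar v_k|\,\|s_{k-1}\|\le\mathrm{const}\,\|s_{k-1}\|\,\|d_{k-1}\|$ (the paper's step (i)), the growth of $\|d_k\|$ is simply uncontrolled, so the contradiction with \eqref{eq:Z2condition} cannot be reached along your route; this is a genuine gap, not just omitted bookkeeping.
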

					
					\begin{proof}

						~~We proceed it  by contradiction, namely,  suppose that $\left\| {{g_k}} \right\| \ge {\gamma _1}, \;\text{ where}\;   {{\gamma _1} > 0}  $, for all $k \ge 0$. 		
						By Lemma \ref{LemmaSuffDesc2} and  Cauchy-Schwarz   inequality, we have that
						\[\left\| {{d_{k - 1}}} \right\| \ge {   c}\left\| {{g_{k - 1}}} \right\| \ge {   c}{\gamma _1},\]
						which together with Assumption \ref{Assumption} (i) yields
						\[  \left\| {{s_{k - 1}}} \right\| = {\alpha _{k - 1}}\left\| {{d_{k - 1}}} \right\| \le \frac{D}{{{   c}{\gamma _1}}}\left\| {{d_{k - 1}}} \right\|,\]
						where $ D $ is    given by \eqref{eq:Dandgamma2}.
						
						As a result,   for the choices of $ \tau_k $  in \eqref{eq:sometauk}, it is not difficult to obtain from \eqref{eq:Boundykdk} that
						\begin{equation} \label{eq:taukbound1}
							{\tau _k^{(1)} }= 1 = \frac{1}{{{   c}{\gamma _1}}}{   c}{\gamma _1} \le \frac{1}{{{   c}{\gamma _1}}}\left\| {{d_{k - 1}}} \right\|
						\end{equation}
						and
						\begin{equation}\label{eq:taukbound2}
							\tau _k^B = \frac{{s_{k - 1}^T{y_{k - 1}}}}{{{{\left\| {{s_{k - 1}}} \right\|}^2}}}  \le \tau _k^H = \frac{{{{\left\| {{y_{k - 1}}} \right\|}^2}}}{{s_{k - 1}^T{y_{k - 1}}}} \le \frac{{L^2D}}{{c\gamma _1^2\left( {1 - \sigma } \right)}}\left\| {{d_{k - 1}}} \right\|.
						\end{equation}
						
						The following  is divided into the   three steps.


						(i) A bound for $  {{v_k}}  $ and $\eta_k   $ in \eqref{eq:Truconditon2}.
							
								By  \eqref{eq:Truconditon2}, \eqref{eq:Boundgk1dkgkdk}, \eqref{eq:taukbound1}, \eqref{eq:taukbound2}, \eqref{eq:Lipschitz continuous}, \eqref{eq:Dandgamma2} and ${\gamma _1} \le \left\| {{g_k}} \right\| \le {\gamma _2}$, we have
							
							\begin{equation}\label{eq:|bk|}	
								\begin{aligned}
									\left| {{v_k}} \right|
									&
									= \left| {\frac{{1 - 2{\omega _k}}}{{1 - {\omega _k}}}\frac{{g_k^T{y_{k - 1}}}}{{s_{k - 1}^T{y_{k - 1}}}} - \left( {{\tau _k} + \frac{{{{\left\| {{y_{k - 1}}} \right\|}^2}}}{{s_{k - 1}^T{y_{k - 1}}}}} \right)\frac{{g_k^T{s_{k - 1}}}}{{s_{k - 1}^T{y_{k - 1}}}} + \frac{1}{{1 - {\bar\omega _k}}}\frac{{g_k^T{s_{k - 1}}}}{{{{\left\| {{s_{k - 1}}} \right\|}^2}}}} \right|\\&
									\le \frac{1}{{1 - {\xi _1}}}\frac{{{\gamma _2}L}}{{c\gamma _1^2\left( {1 - \sigma } \right)}}\left\| {{d_{k - 1}}} \right\| + \max \left\{ {1,\frac{\sigma }{{1 - \sigma }}} \right\}\left( {\max \left\{ {\frac{1}{{c{\gamma _1}}},\frac{{L^2D}}{{c\gamma _1^2\left( {1 - \sigma } \right)}}} \right\} + \frac{{L^2D}}{{c\gamma _1^2\left( {1 - \sigma } \right)}}} \right)\left\| {{d_{k - 1}}} \right\|\\&
									\;\;\;\;\;\;\;\;\;\;\;\;\;\; + \frac{1}{{1 - {\xi _1}}}\frac{{L{\gamma _2}}}{{\left( {1 - \sigma } \right)c\gamma _1^2}}\left\| {{d_{k - 1}}} \right\|\\&
									\le \left[ {\frac{2}{{1 - {\xi _1}}}\frac{{{\gamma _2}L}}{{c\gamma _1^2\left( {1 - \sigma } \right)}}{\rm{ + }}\max \left\{ {1,\frac{\sigma }{{1 - \sigma }}} \right\}\left( {\max \left\{ {\frac{1}{{c{\gamma _1}}},\frac{{L^2D}}{{c\gamma _1^2\left( {1 - \sigma } \right)}}} \right\} + \frac{{L^2D}}{{c\gamma _1^2\left( {1 - \sigma } \right)}}} \right)} \right]\left\| {{d_{k - 1}}} \right\|\\&
									\buildrel \Delta \over = {{\bar c}_2}\left\| {{d_{k - 1}}} \right\|
								\end{aligned}
							\end{equation}
							and		
							\begin{equation}
							\begin{aligned}
							\label{eq:etabound}\left|\eta_k \right|
							&
							\le  \frac{{l_k\left\| {{g_k}} \right\|\left\| {{s_{k - 1}}} \right\|}}{{{{\left\| {{s_{k - 1}}} \right\|}^2}}}
							\le {\bar \xi_2\left\| {{d_{k-1}}}\right\|}\frac{\left\| {{g_k}}\right\|}{\left\| {{s_{k-1}}} \right\|\left\|{{d_{k-1}}} \right\|}
							\le L{\bar \xi_2\left\| {{d_{k-1}}}\right\|}\frac{\left\| {{g_k}}\right\|}{\left\| {{y_{k-1}}} \right\|\left\|{{d_{k-1}}} \right\|}\\&
							\le L{\bar \xi_2}\frac{\gamma_2}{{c\gamma _1^2\left( {1 - \sigma } \right)}} \left\| {{d_{k-1}}}\right\|
							\buildrel \Delta \over = {{\bar{\bar{c}}_2}}\left\| {{d_{k - 1}}} \right\|
							\end{aligned} 												
						 \end{equation}

						(ii)
							A bound on the steps ${s_k}$.
							
							For any $l \ge k$, by the definition of ${\widetilde u_k}$  in Lemma \ref{LemmaUkbound} we have
							\begin{equation} \label{eq:{x_l} - {x_k}}
								{x_l} - {x_k} = \sum\limits_{j = k}^{l - 1} {  \left( {{x_{j + 1}} - {x_j}} \right)  = } \sum\limits_{j = k}^{l - 1} {\left\| {{s_j}} \right\|} {\widetilde u_j} = \sum\limits_{j = k}^{l - 1} {\left\| {{s_j}} \right\|} {\widetilde u_k} + \sum\limits_{j = k}^{l - 1} {\left\| {{s_j}} \right\|} \left( {{\widetilde u_j} - {\widetilde u_k}} \right),
							\end{equation}
							which yields that
							\begin{equation} \label{eq:sum sj1}
								\sum\limits_{j = k}^{l - 1} {\left\| {{s_j}} \right\|}  \le \left\| {{x_l} - {x_k}} \right\| + \sum\limits_{j = k}^{l - 1} {\left\| {{s_j}} \right\|} \left\| {{\widetilde u_j} - {\widetilde u_k}} \right\| \le D + \sum\limits_{j = k}^{l - 1} {\left\| {{s_j}} \right\|} \left\| {{\widetilde u_j} - {\widetilde u_k}} \right\|.
							\end{equation}

							Let $\Delta $ be any positive integer such  that
							\begin{equation} \label{eq:Delta4cD}
								\Delta  \ge 4\bar c_2D.
							\end{equation}
							
					\noindent		According to Lemma \ref{LemmaUkbound}, we can choose ${k_0} >0$   such that
							\begin{equation} \label{eq:sum ui+1-ui}
								\sum\limits_{i \ge {k_0}} {{{\left\| {{\widetilde u_{i + 1}} - {\widetilde u_i}} \right\|}^2}}  \le \frac{1}{{4\Delta }}.
							\end{equation}
							
					\noindent		If $j > k \ge {k_0}$ and $j - k \le \Delta $, then we know from \eqref{eq:sum ui+1-ui} and the Cauchy-Schwarz inequality that
							\begin{equation} \label{eq:uj-uk}
								\left\| {{\widetilde u_j} - {\widetilde u_k}} \right\| \le \sum\limits_{i = k}^{j - 1} {\left\| {{\widetilde u_{i + 1}} - {\widetilde u_i}} \right\|}  \le \sqrt {j - k} {\left( {\sum\limits_{i = k}^{j - 1} {{{\left\| {{\widetilde u_{i + 1}} - {\widetilde u_i}} \right\|}^2}} } \right)^{{1 \mathord{\left/
												{\vphantom {1 2}} \right.
												\kern-\nulldelimiterspace} 2}}} \le \sqrt \Delta  {\left( {\frac{1}{{4\Delta }}} \right)^{{1 \mathord{\left/
												{\vphantom {1 2}} \right.
												\kern-\nulldelimiterspace} 2}}} = \frac{1}{2}.
							\end{equation}
							
					\noindent		Combining \eqref{eq:uj-uk} with \eqref{eq:sum sj1} implies
							\begin{equation} \label{eq:sumsj2}
								\sum\limits_{j = k}^{l - 1} {\left\| {{s_j}} \right\|}  \le 2D,
							\end{equation}
							
				\noindent			where $l > k \ge {k_0}$ and $l - k \le \Delta $.
							
						(iii)
							A bound on the directions $d_l$.
							
							For  the search direction \eqref{eq:Truconditon2},
							by  \eqref{eq:bounduk}, \eqref{eq:etabound} and \eqref{eq:|bk|}, we have
							\begin{equation} \label{eq:dkkkk}
								{\left\| {{d_l}} \right\|^2} \le  \left(  u_l   \left\| {{g_l}} \right\| + \max \left\{ { \left| {{\eta _l}} \right|,\left| {{v_l}} \right|} \right\}\left\| {{s_{l - 1}}} \right\|  \right)^2  \le 2\bar c_1^2\gamma _2^2 + 2\bar c_3^2{\left\| {{s_{l - 1}}} \right\|^2}{\left\| {{d_{l - 1}}} \right\|^2},
							\end{equation}
							where $\bar c_3=\max\left\lbrace \bar c_2,  {\bar {\bar {c}}_2 }\right\rbrace$, $\bar c_1 $ and $\bar c_2 $ are given by \eqref {eq:bounduk} and \eqref{eq:|bk|}, respectively. Let ${S_i} = 2\bar c_3^2{\left\| {{s_i}} \right\|^2}$,   for any $l > {k_0}$, we have
							\begin{equation} \label{eq:dl}
								{\left\| {{d_l}} \right\|^2} \le 2 \bar c_1^2\gamma _2^2\left( {\sum\limits_{i = {k_0} + 1}^l {\prod\limits_{j = i}^{l - 1} {{S_j}} } } \right) + {\left\| {{d_{{k_0}}}} \right\|^2}\prod\limits_{j = {k_0}}^{l - 1} {{S_j}} .
							\end{equation}
							Note that the product is define to be 1 whenever the index range is vacuous. Now we derive   a product of $\Delta$ consecutive $S_j$ by  the arithmetic-geometric mean inequality,  \eqref{eq:sumsj2} and \eqref{eq:Delta4cD} for any ${k \ge {k_0}}$:
							\begin{equation} \label{eq:prod Sj}
								\begin{aligned}
									\prod\limits_{j = k}^{k + \Delta  - 1} {{S_j}}
									&= \prod\limits_{j = k}^{k + \Delta  - 1} {2{\bar c_3^2}{{\left\| {{s_j}} \right\|}^2}}  = {\left( {\prod\limits_{j = k}^{k + \Delta  - 1} {\sqrt 2 \bar c_3\left\| {{s_j}} \right\|} } \right)^2}\\
									& \le {\left( {\frac{{\sum\limits_{j = k}^{k + \Delta  - 1} {\sqrt 2  \bar c_3\left\| {{s_j}} \right\|} }}{\Delta }} \right)^{2\Delta }} \le {\left( {\frac{{2\sqrt 2  \bar c_3D}}{\Delta }} \right)^{2\Delta }} \le \frac{1}{{{2^\Delta }}}.
								\end{aligned}
							\end{equation}
							
					\noindent		As a result, there must exist a positive constant $  c_3 >0 $ such that  $\left\| {{d_l}} \right\| \le {	  c_3}$. Combining   $ \left\| {{g_k}} \right\| \ge \gamma_1 $ with $\left\| {{d_l}} \right\| \le {	  c_3}$ yields
							$$\sum\limits_{k = 0}^{ + \infty } {\frac{{{{\left\| {{g_k}} \right\|}^4}}}{{{{\left\| {{d_k}} \right\|}^2}}}} = +\infty.	$$

							%
							%
							%
					\noindent		which contradicts \eqref{eq:Z2condition}. Therefore, we obtain  \eqref{eq:liminf}, which completes the proof. \qed
							
					\end{proof}

				\begin{rem}

				It is not difficult to see that if $ f $ is convex, then we can obtain the strong convent result:
				$$\mathop {\lim  }\limits_{k \to \infty } \left\| {{g_k}} \right\| = 0.$$
						\end{rem}

			 	It follows from \eqref{eq:Daiimprlinsear1} that $	f_{k+1} - f_k   \le      {\bar \eta _k}     $, which together with $  \sum\limits_{k \ge 0} {{{\overline \eta  }_k}}< +\infty  $ and Assumption  \ref{Assumption}   (ii)   implies  that the sequence $ \left\lbrace f_k \right\rbrace  $  is convergent. We denoted its limite by $ f^* $.  It also can deduce from Theorem \ref{thm:31} and Assumption  \ref{Assumption}   (ii) that   there exists a convergent subquece $ \left\lbrace x_{k_i} \right\rbrace  $ of $ \left\lbrace x_{k } \right\rbrace  $ such that  $ g\left( \hat{x}    \right) = 0 $, where  $ x_{k_j} \to \hat{x} $ when $j\rightarrow   +\infty $.  Since $ f $ is convex on $ \mathbb{R}^n $, we have $ f^*=f(\hat{x}) \le f(x), \; \forall x \in  \mathbb{R}^n$. 	If  there exists another convergent subquece $ x_{k_j} $ of  $ x_{k} $  such that  $ g\left( \bar x \right) \ne 0 $, where  $ x_{k_j} \to \bar x $ when $j\rightarrow   +\infty $. Since   $ f(\bar x) =f^* \le f(x), \; \forall x \in  \mathbb{R}^n$, we know that $ \bar x $ is a global minimizer, which contradicts $ g\left( \bar x \right) \ne 0 $. Therefore, all accumulations  of $ \left\lbrace x_{k }\right\rbrace  $ are stationary points, which implies that $\mathop {\lim  }\limits_{k \to \infty } \left\| {{g_k}} \right\| = 0.$   

					\section{Numerical experiments}

					We  compare Algorithm 1  with    CGOPT (1.0) \cite{Dai2013A},  SMCG$ \_ $BB \cite{Liu2019An} and CG$ \_ $DESCENT (5.3) \cite{Hager2005A} in the section. It is widely accepted  that CGOPT   and CG$ \_ $DESCENT are the most two famous conjugate gradient software packages. Algorithm 1   was implemented based on the C code of CGOPT (1.0), which is available from Dai's homepage: \url{http://lsec.cc.ac.cn/~dyh/software.html}.  The test collection includes   147  unconstrained optimization problems   from the CUTEst library \cite{Gould2001CUTEr}, which can be found in \cite{BBQ-HDL},  and the  dimensions of the 147 test problems and the initial points are all default. The codes of CG$ \_ $DESCENT (5.3) and SMCG$ \_ $BB can be found in Hager's homepage: \url{http://users.clas.ufl.edu/hager/papers/Software} and \url{https://www.scholat.com/liuzexian}, respectively. In Algorithm 1,  we take the following  parameters:
					\[\xi_1= 0.75, \; \xi_2 =0.5,\; \bar{\bar\xi}_2=0.2,
					\;{\bar\xi}_2=10,\;\eta_1=0.99,\]
					\[\eta_2=3,\;\xi_3=7.5\times10^{-5},
					\xi_4=9\times 10^{-4},\;\xi_5=0.9, \;\xi_6=10  \]
					and the  other parameters  used the default value in CGOPT (1.0).    CGOPT   (1.0), CG$ \_ $DESCENT (5.3) and  SMCG$ \_ $BB  used  all  default values of these parameter 	   in their codes     but  the stopping condition. Especially, CG$ \_ $DESCENT (5.3)  used the  default line search---the combination of the original Wolfe conditions and the approximate
					Wolfe conditions, which  performed very
					well in the numerical tests. 
					All test methods are terminated    if    $\parallel {g_{k}}{\parallel _\infty } \le {10^{ - 6}}$ is satisfied.

						The performance profiles introduced by Dolan and Mor\'e \cite{Dolan2002Benchmarking} are used to display the performances of these test algorithms. 	In the following  figures,  ``$ N_{iter} $'', ``$ N_f $'',  ``$ N_g $'' and ``$ T_{cpu} $'' represent the number of iterations, the number of function evaluations, the number of gradient  evaluations and CPU time (s), respectively.
						
						
						The numerical experiments are divided into the following three groups.

			In the first group of 	the numerical experiments, we test the numerical performance of Algorithm 1 with different  $\tau_k$ in \eqref{eq:Trutauk} and \eqref{eq:sometauk}. The default $\tau_k$ in Algorithm 1  is given by \eqref{eq:Trutauk}. Figures \ref{fig:iterDifftauk}-\ref{fig:NgDifftauk} present the numerical performance in term of the number of iterations, the number of function evaluations and  the number of gradient  evaluations. We do not test the performance about the running time since it is  similar to the above  figures.  As observed in the Figures \ref{fig:iterDifftauk}-\ref{fig:NgDifftauk},    $\tau_k$  in \eqref{eq:Trutauk} is the most efficient  for Algorithm 1,  followed by $\tau_k^B$, and  $\tau_k^H$ is the worst.  								
					
					\begin{figure}[htbp]	
						\begin{minipage}[t]{0.5\linewidth}
							\centering
							\includegraphics[width=9.3cm,height=7cm]{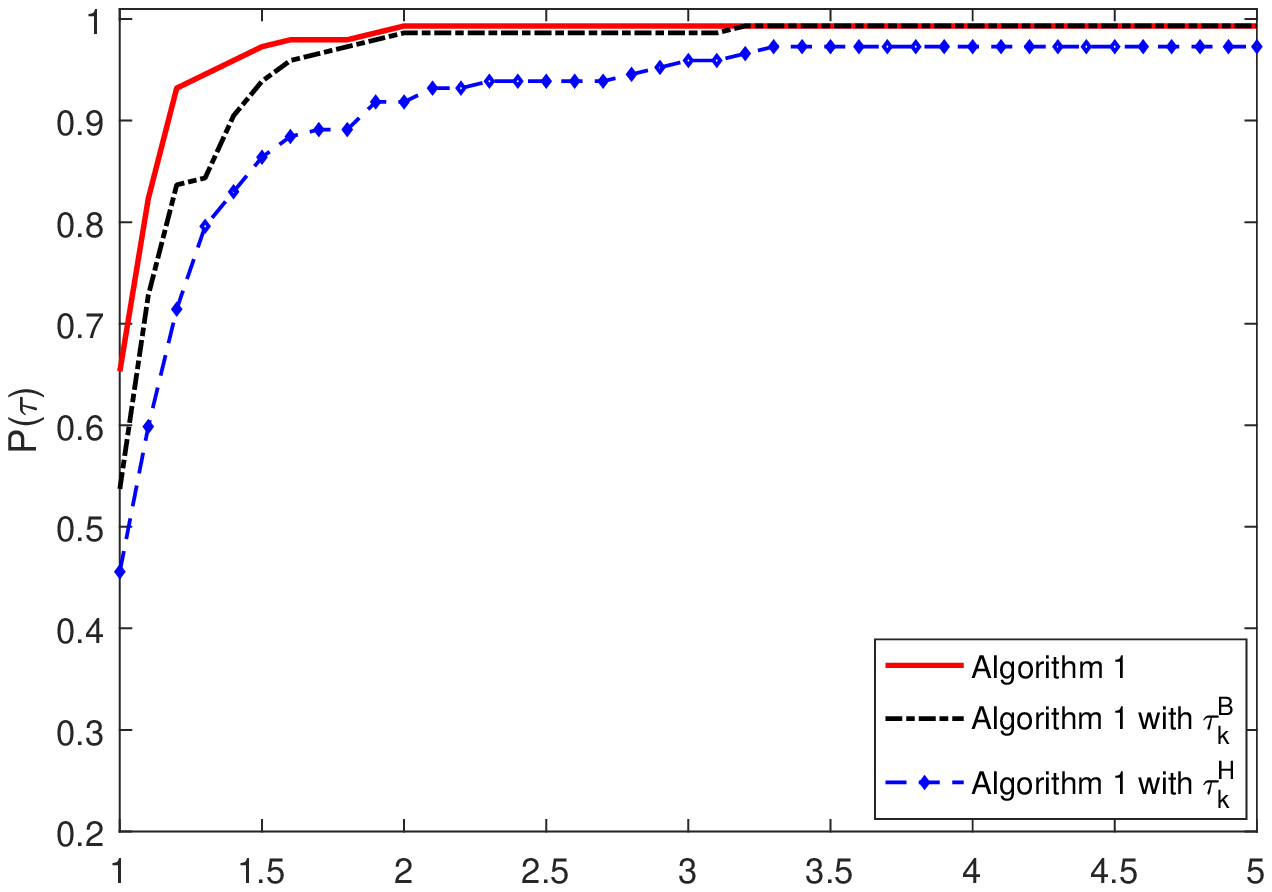}
							\caption{$ N_{iter}$  }
							\label{fig:iterDifftauk}
						\end{minipage}%
						\begin{minipage}[t]{0.5\linewidth}
							\centering
							\includegraphics[width=9.3cm,height=7cm]{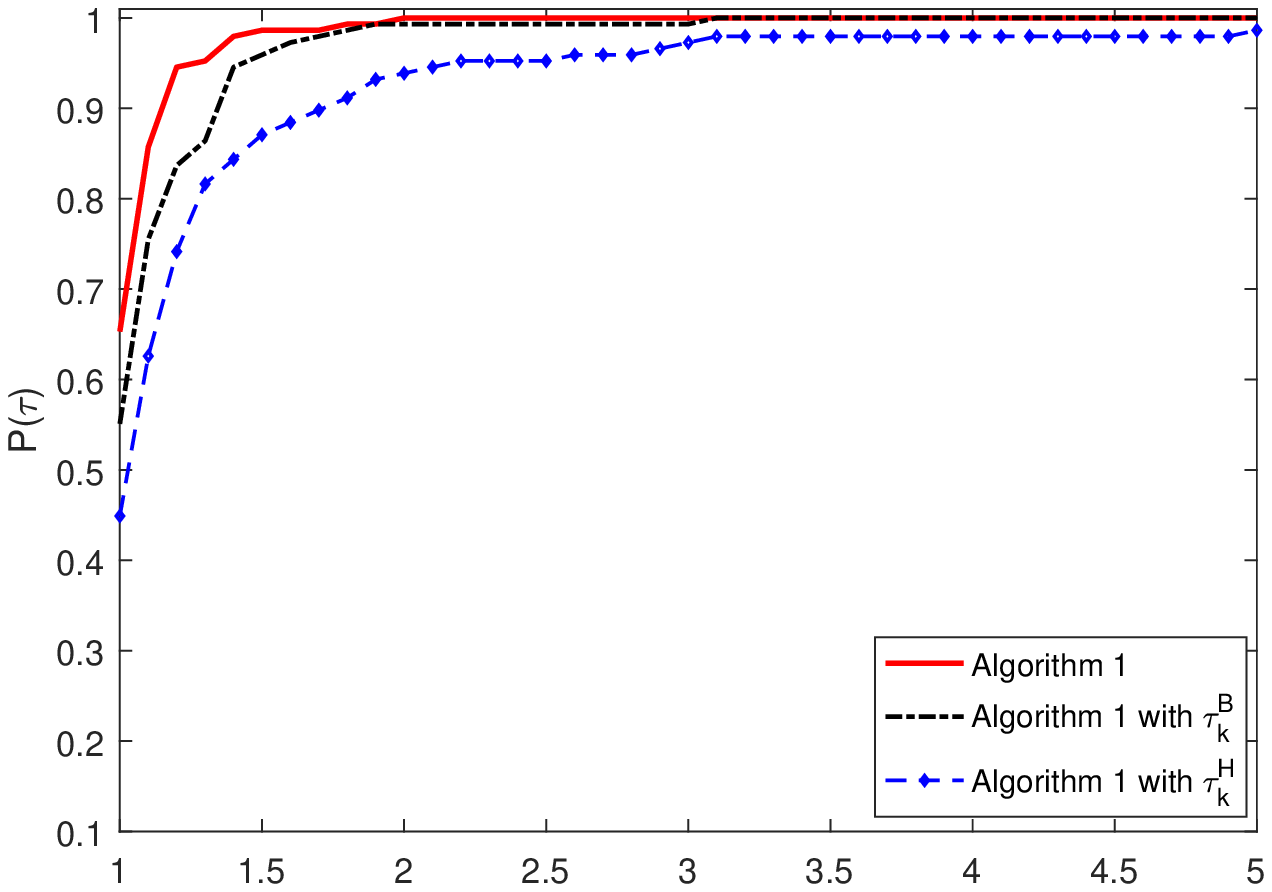}
							\caption{   $ N_{f}$  }
							\label{fig:NfDifftauk}
						\end{minipage}%
					\end{figure}

					\begin{figure}[htbp]	
						\begin{minipage}[t]{0.5\linewidth}
							\centering
							\includegraphics[width=9.3cm,height=7cm]{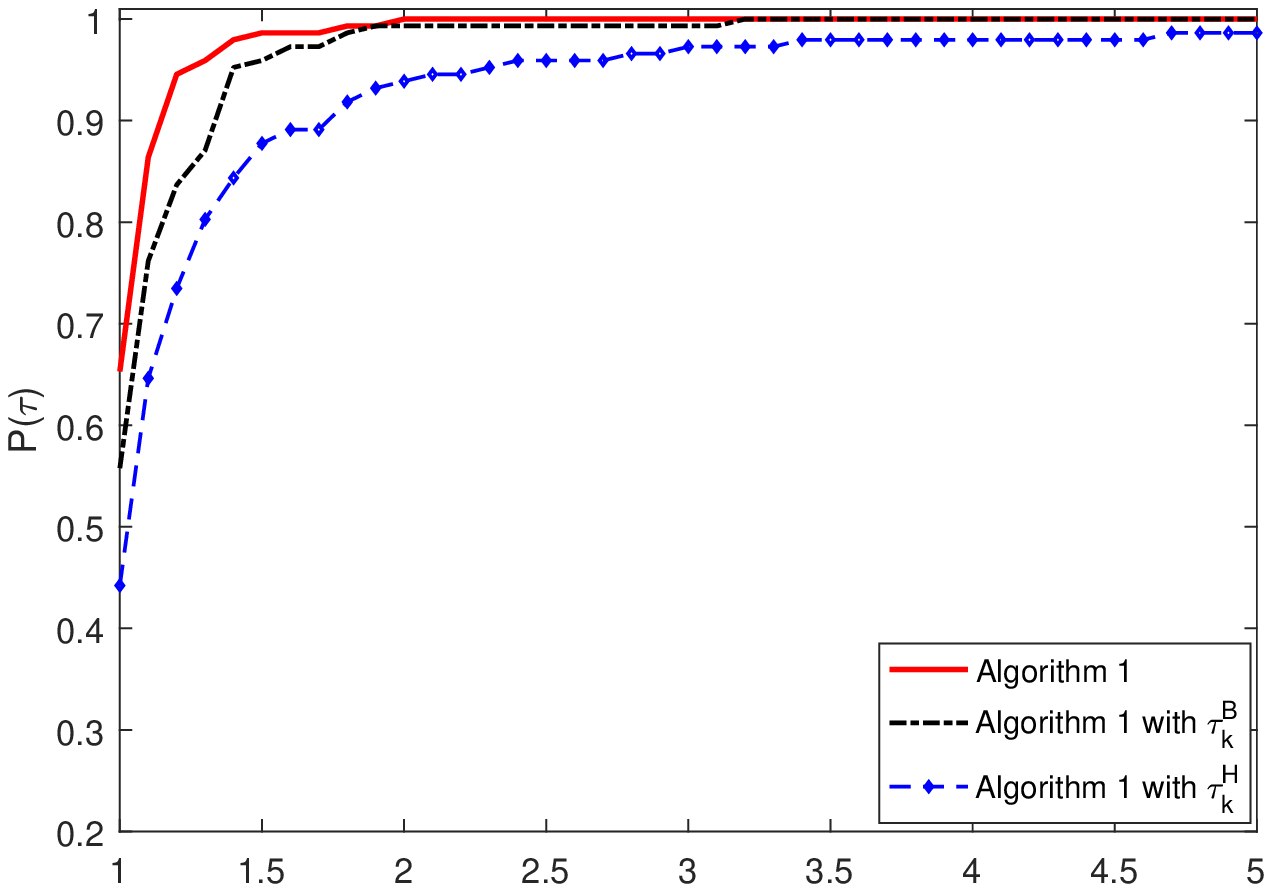}
							\caption{$ N_{g}$  }
							\label{fig:NgDifftauk}
						\end{minipage}%
						\begin{minipage}[t]{0.5\linewidth}
							\centering
							\includegraphics[width=9.3cm,height=7cm]{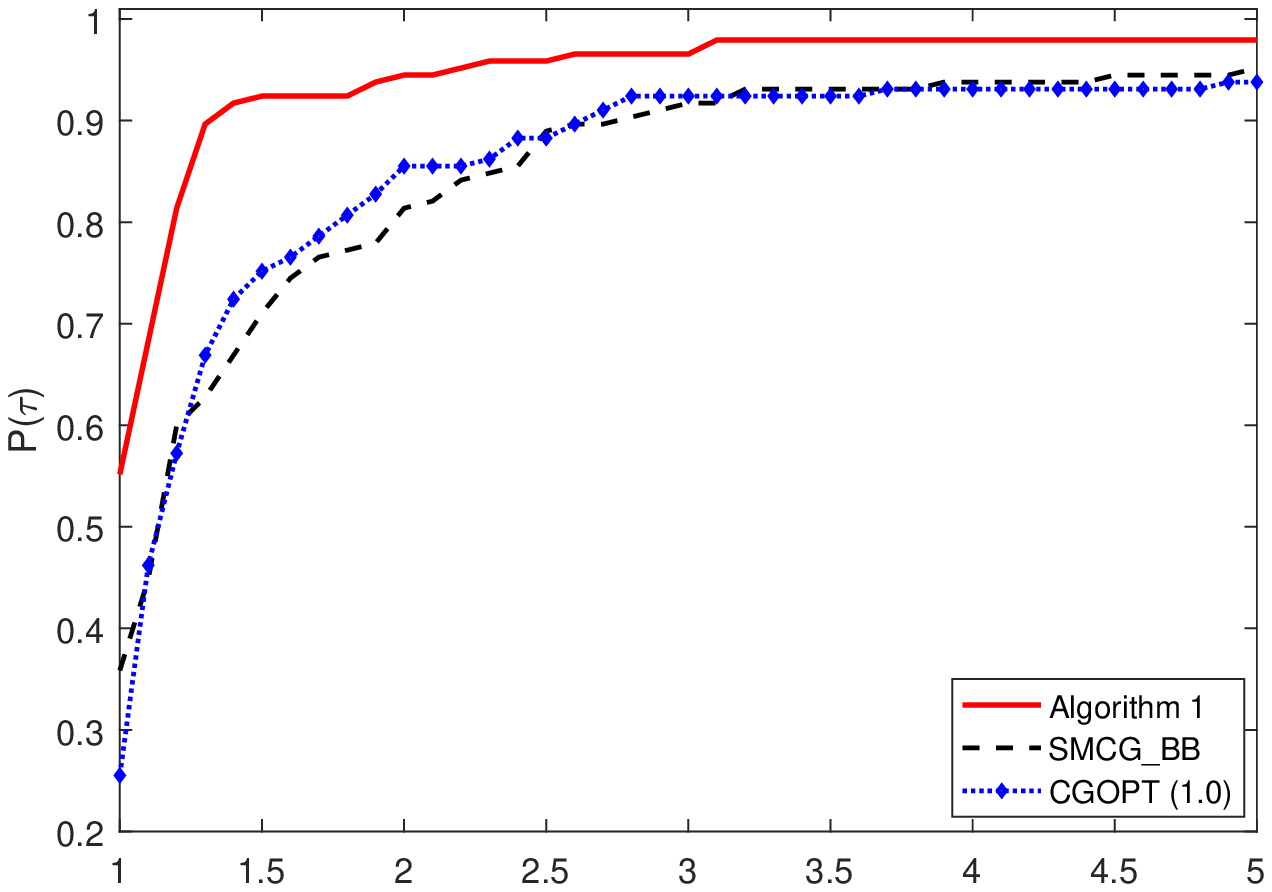}
							\caption{   $ N_{iter}$  }
							\label{fig:IterA1CGoptSMCGBB}
						\end{minipage}%
					\end{figure}

					\begin{figure}[htbp]	
						\begin{minipage}[t]{0.5\linewidth}
							\centering
							\includegraphics[width=9.3cm,height=7cm]{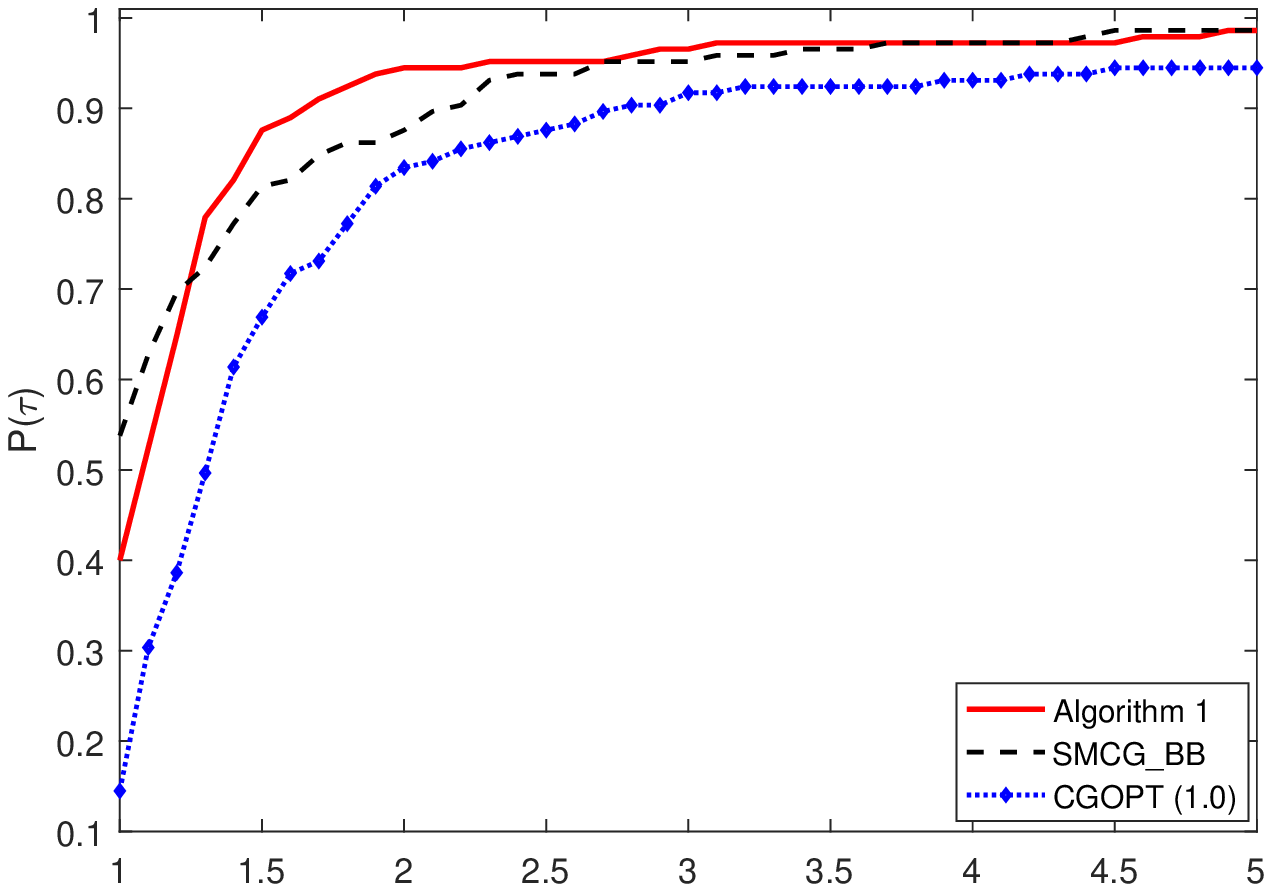}
							\caption{$ N_{f}$  }
							\label{fig:NfA1CGoptSMCGBB}
						\end{minipage}%
						\begin{minipage}[t]{0.5\linewidth}
							\centering
							\includegraphics[width=9.3cm,height=7cm]{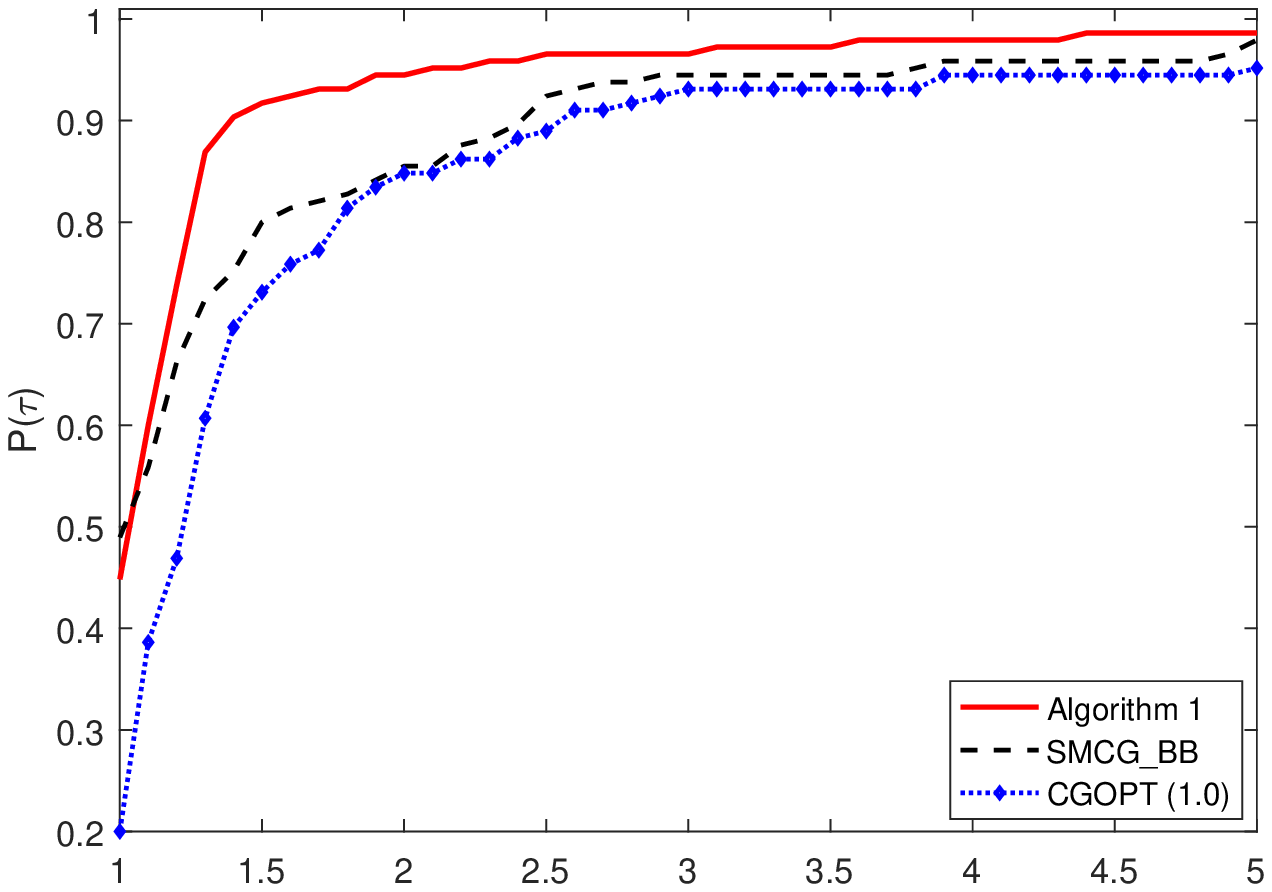}
							\caption{   $ N_{g}$  }
							\label{fig:NgA1CGoptSMCGBB}
						\end{minipage}%
					\end{figure}	
					
					\begin{figure}[htbp]	
						\begin{minipage}[t]{0.5\linewidth}
							\centering
							\includegraphics[width=9.3cm,height=7cm]{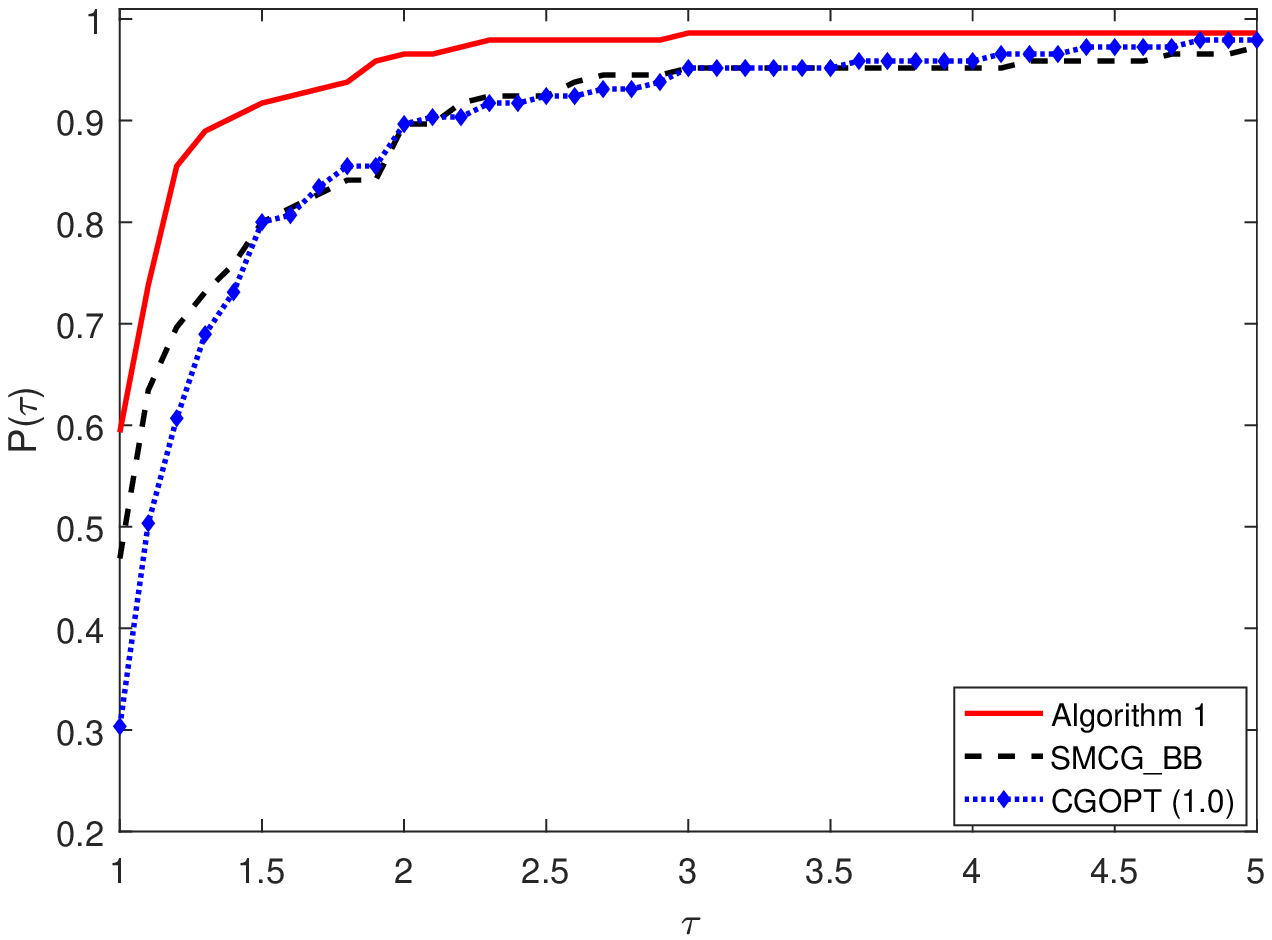}
							\caption{$T_{cpu}  $  }
							\label{fig:TcpuA1CGoptSMCGBB}
						\end{minipage}%
						\begin{minipage}[t]{0.5\linewidth}
							\centering
							\includegraphics[width=9.3cm,height=7cm]{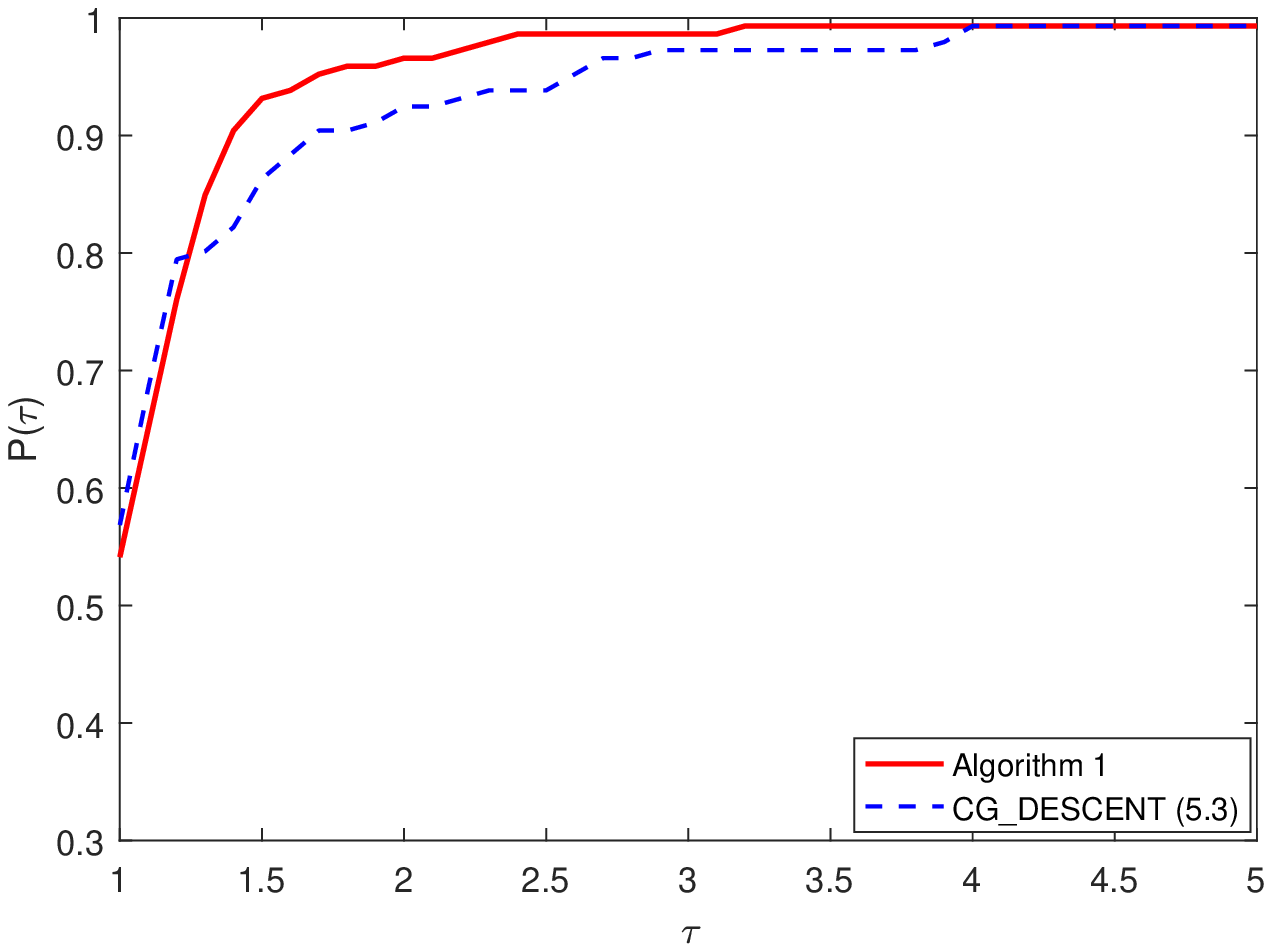}
							\caption{   $N_{iter}  $  }
							\label{fig:IterA1CG53}
						\end{minipage}%
					\end{figure}

					In the second group of 	the numerical experiments, we compare the   performance of Algorithm 1 with that of SMCG$\_$BB and CGOPT (1.0).   As shown in Figure \ref{fig:IterA1CGoptSMCGBB}, we observe that the Algorithm 1 performs much better than SMCG$\_$BB and CGOPT (1.0)  in term of the number of iterations,  since it successfully solves about 56$ \% $ test problems with the least iterations, while the numbers of  SMCG$\_$BB and CGOPT (1.0)  are about 37$ \% $ and 27$ \% $, respectively.
				Figure \ref{fig:NfA1CGoptSMCGBB} shows  that Algorithm 1 enjoys large advantage over CGOPT (1.0) and performs slightly better than  SMCG$\_$BB in term of the number of function evaluations.   As shown in Figure \ref{fig:NgA1CGoptSMCGBB}, we can see that Algorithm 1 is superior much to  CGOPT (1.0)  and  SMCG$\_$BB in term of the number of gradient evaluations.  Figure  \ref{fig:TcpuA1CGoptSMCGBB} indicates that Algorithm 1 is faster than  SMCG$\_$BB and CGOPT (1.0).
					
								\begin{figure}[htbp]	
									\begin{minipage}[t]{0.5\linewidth}
										\centering
										\includegraphics[width=9.3cm,height=7cm]{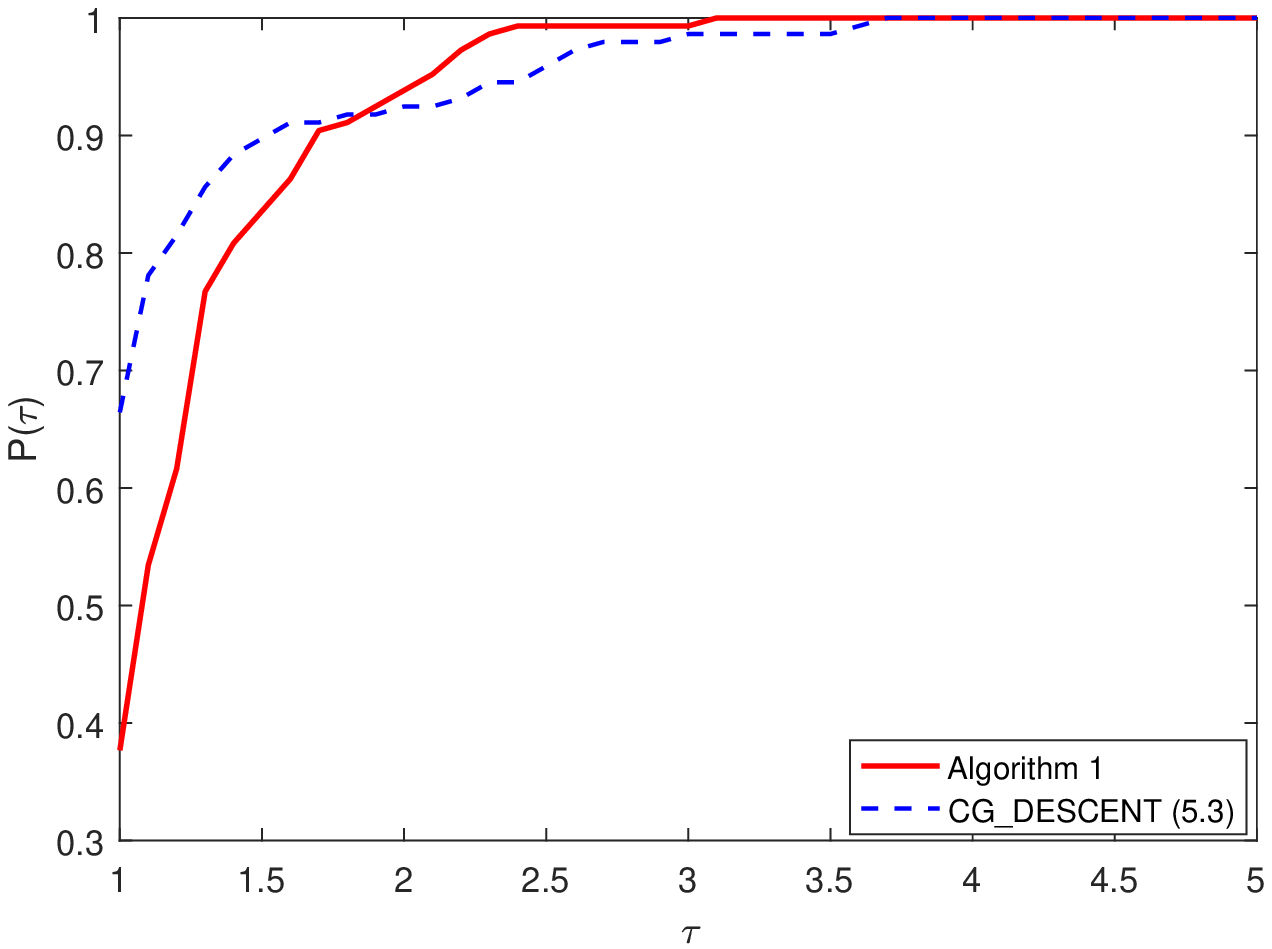}
										\caption{$ N_{f}$  }
										\label{fig:NfA1CG53}
									\end{minipage}%
									\begin{minipage}[t]{0.5\linewidth}
										\centering
										\includegraphics[width=9.3cm,height=7cm]{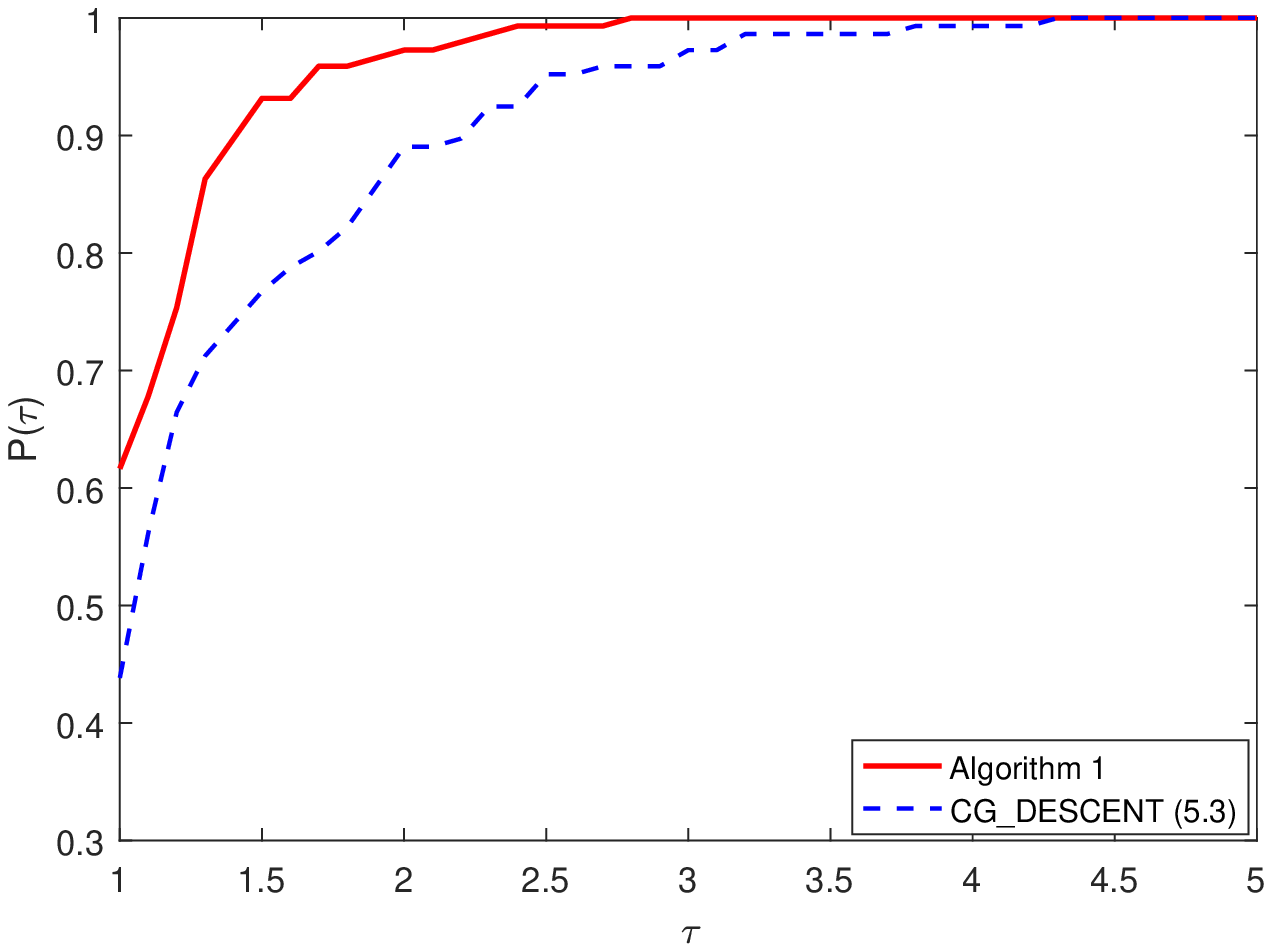}
										\caption{   $ N_{g}$  }
										\label{fig:NgA1CG53}
									\end{minipage}%
								\end{figure}	
								
								\begin{figure}[htbp]	
									\begin{minipage}[t]{0.5\linewidth}
										\centering
										\includegraphics[width=9.3cm,height=7cm]{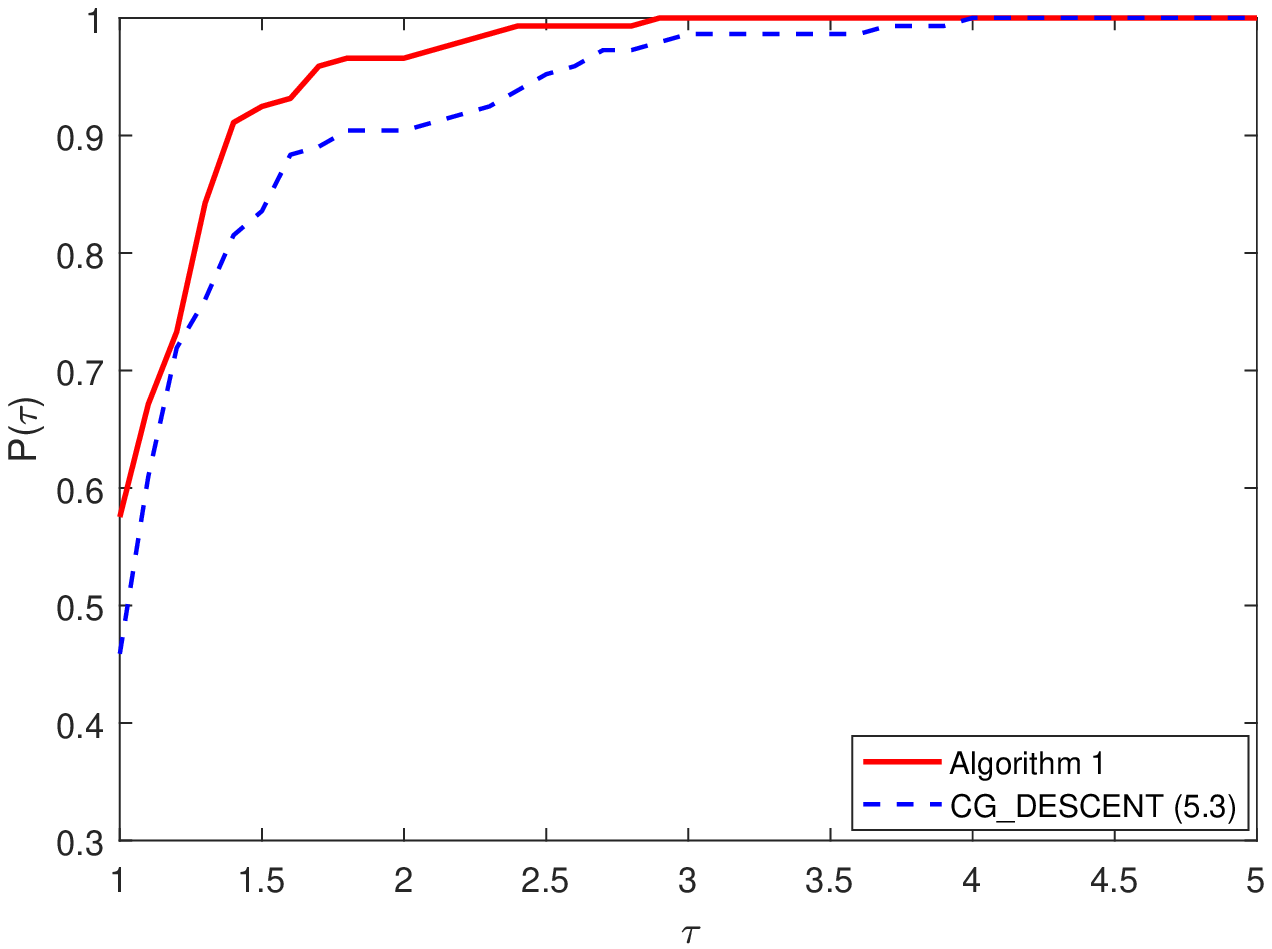}
										\caption{$ N_{f}+3N_g$  }
										\label{fig:Nf+3NgA1CG53}
									\end{minipage}%
									\begin{minipage}[t]{0.5\linewidth}
										\centering
										\includegraphics[width=9.3cm,height=7cm]{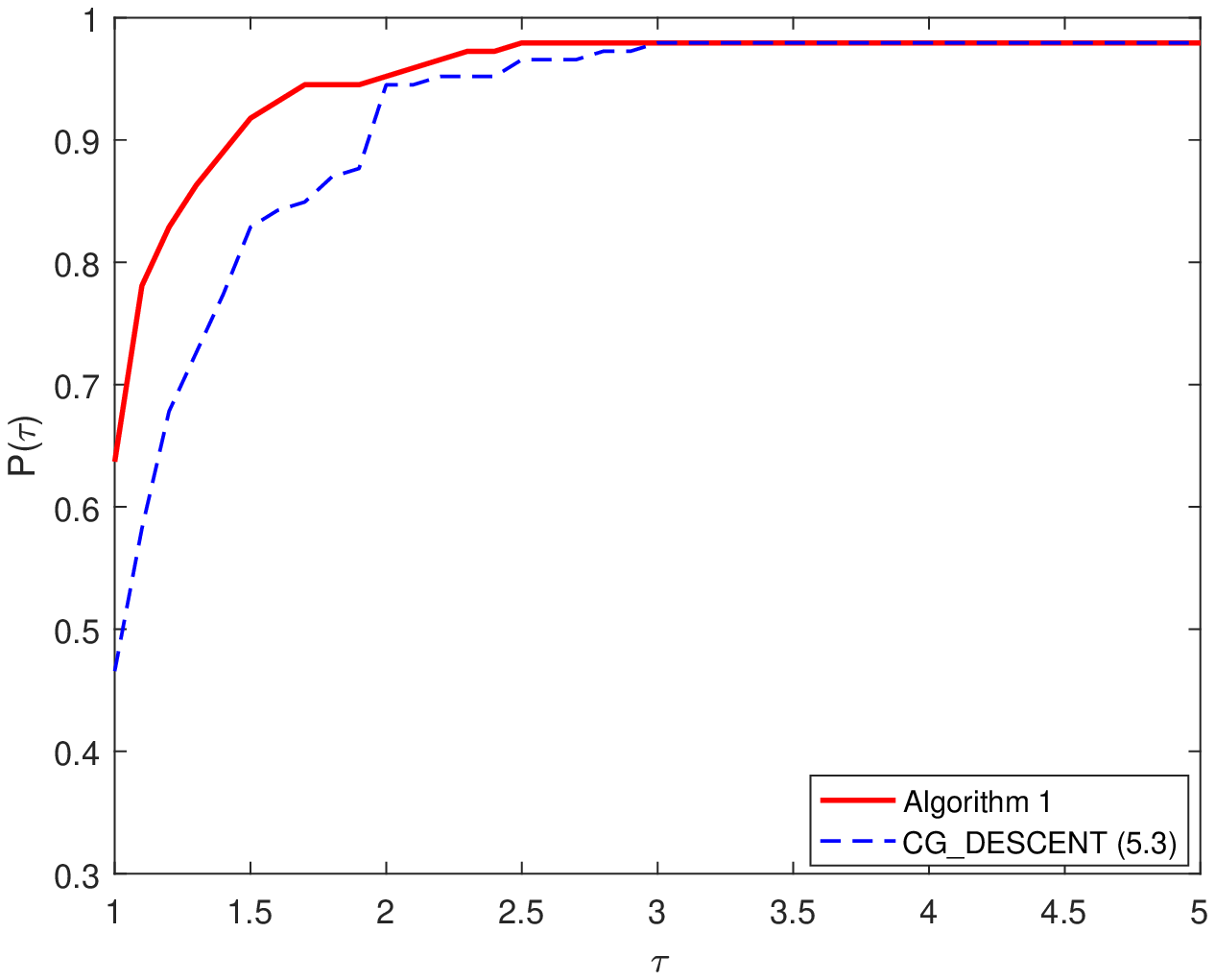}
										\caption{   $ T_{cpu} $  }
										\label{fig:TcpuA1CG53}
									\end{minipage}%
								\end{figure}

				In the third group of the numerical experiments,  we compare the performance of Algorithm 1  with that of  CG$ \_ $DESCENT (5.3). As shown in Figure \ref{fig:IterA1CG53}, we observed that Algorithm 1 performs  better than  CG$ \_ $DESCENT (5.3) in term of the number of iterations, which is a little   beyond our expectations. Figure \ref{fig:NfA1CG53} indicates that  Algorithm 1 is inferior to CG$ \_ $DESCENT (5.3) in term of the number of function evaluations. The reason    is that in the numerical experiments  Algorithm 1   used  the improved Wolfe line search, while CG$\_$DESCENT (5.3) used the combination of the quite efficient  approximate Wolfe line search and the standard  Wolfe line search. It follows from  Section 3 that Algorithm 1 is globally convergent, whereas there is no guarantee for the
				global convergence of CG$\_$DESCENT with the very efficient approximate Wolfe line search \cite{HagerLMCGDESCENT}.  As shown in Figure  \ref{fig:NgA1CG53}, we see that Algorithm 1 enjoys  large advantage  over CG$ \_ $DESCENT (5.3) in term of the number of gradient evaluations.  To see the comprehensive  performance about $N_f$ and $N_g$, we compare the performance  based on  $N_f+3N_g$ in Figure \ref{fig:Nf+3NgA1CG53}.  Figure \ref{fig:Nf+3NgA1CG53} indicates that Algorithm 1  is also superior to    CG$\_$DESCENT (5.3) based on the total performance about $ N_f $ and $ N_g $ though it is at disadvantage in term of $ N_f $. Figure  \ref{fig:TcpuA1CG53} shows that Algorithm 1 is faster than    CG$\_$DESCENT (5.3).

				The above numerical experiments indicate that Algorithm 1 is superior to CGOPT(1.0), CG$ \_ $DESCENT (5.3) and SMCG$ \_ $BB.  It seems that   SMCG method with $ d_k = u_k g_k + v_k s_{k-1} $ can illustrate     greater potential   for  large scale unconstrained optimization compared to the traditional conjugate gradient method with $ d_k =-g_k + \beta_k d_{k-1} $.

					\section{Conclusion and Discussion}
	SMCG methods 		  are quite efficient iterative methods for large scale unconstrained optimization. However, 	 it is usually required to determine the important parameter $\rho_k  \approx g_k^TB_k g_k  $, which is crucial to the   theoretical  properties and the  numerical performance  and is difficult   to be selected  properly. 	 	
					By taking advantage of  the memoryless quasi-Newton method, we present a new subspace  minimization conjugate gradient method  based on  project   technique, which is independent of  the important parameter $ \rho_k $.  It is remarkable that the SMCG method without the exact line search  enjoys finite termination for two dimensional convex quadratic functions, which will guide us  in the design of the proposed method. The proposed method can be regarded as the extension of Dai-Kou conjugate gradient method. The descent property of the search direction is analyzed. We also establish   the global convergence for general nonlinear functions of the proposed method. Numerical experiments indicate that  the proposed method is very promising. We believe that SMCG methods are able to become   strong candidates for large scale unconstrained optimization.					
					

					\begin{acknowledgements} We would like to  thank Professor   Yu-Hong Dai in Chinese Academy of Sciences for his valuable and
						insightful comments on this manuscript.   This research is supported by National Science Foundation of China (No. 12261019, 12161053), Guizhou Provincial Science and Technology Projects (No. QHKJC-ZK[2022]YB084).
					\end{acknowledgements}

\noindent\textbf{Data availability.}					
	The datasets generated during and/or analysed during the current study are available from the corresponding author on reasonable request.	
	
\noindent\textbf{Conflict of interest.	}
	The authors declare no competing interests.


\begin{thebibliography}{}
						
						\bibitem{Fletcher1964Function}  Fletcher  R.,  Reeves C.:  Function minimization by conjugate gradients. Comput. J. \textbf{7(2)},  149-154(1964).
						
						\bibitem{Hestenes1952Methods}  Hestenes  M. R., Stiefel  E. L.: Methods of conjugate gradients for solving linear systems.   J. Res. Natl.
						Bur. Stand. \textbf{49(6)}, 409-436(1952).
						
						\bibitem{Polak1969The}   Polyak  B. T.: The conjugate gradient method in extreme problems.  USSR Comput. Math. Math.
						Phys. \textbf{9}, 94-112(1969).
						
		\bibitem{Polak1969Note}Polak, E.,  Ribi\`ere, G.: Note sur la convergence de m\'ethodes de directions conjug\'ees. Rev. Francaise Informat. Recherche Op\'ertionelle, \textbf{3}, 35-43(1969).
		
						
						
						\bibitem{Dai1999A} Dai  Y. H., Yuan  Y. X.: A nonlinear conjugate gradient method with a strong global convergence property.
						SIAM J. Optim. \textbf{10(1)}, 177-182(1999).
						
						\bibitem{Hager2005A} Hager, W. W., Zhang, H. C.: A new conjugate gradient method with guaranteed descent and an efficient line search. SIAM J. Optim. \textbf{16(1)}, 170-192 (2005).
						
						\bibitem{HagerLMCGDESCENT}Hager W. W., Zhang, H. C.:  Algorithm 851: CG$\_$DESCENT, a conjugate gradient method with guaranteed descent.
						ACM Trans. Math. Softw. \textbf{32(1)}, 113-137(2006).
						
						\bibitem{Dai2013A}Dai Y. H., Kou  C. X.: A nonlinear conjugate gradient algorithm with an optimal property and an improved Wolfe line search. SIAM J. Optim. \textbf{23(1)}, 296-320(2013).
						
						\bibitem{Perry1977A}  Perry J. M.: A class of conjugate gradient algorithms with a two-step variable-metric memory. Discussion Paper 269, Center for Mathematical Studies in Economics and Management Sciences, Northwestern University, Chicago, 1977.
						
						\bibitem{Shanno1978On}  Shanno D. F.: On the convergence of a new conjugate gradient algorithm. SIAM J. Numer. Anal. \textbf{15}, 1247-1257(1978).
					
						\bibitem{Andrei2020Nonlinear} Andrei  N.: Nonlinear conjugate gradient methods for unconstrained optimization. Springer, Berlin, 2020.
						
						\bibitem{Yuan1995A}   Yuan Y. X., Stoer J.: A subspace study on conjugate gradient algorithms. Z. Angew. Math. Mech. \textbf{75(1)},  69-77(1995).
						
						\bibitem{Andrei2014An}   Andrei  N.:   An accelerated subspace minimization three-term conjugate gradient algorithm for unconstrained optimization. Numer. Algorithms  \textbf{65(4)}, 859-874(2014).
						
						\bibitem{Yang2017A}   Yang  Y. T., Chen  Y. T., Lu  Y. L.: A subspace conjugate gradient algorithm for large-scale unconstrained optimization. Numer. Algorithms \textbf{76(3)}, 813-828(2017).
						
						\bibitem{Dai2016A} Dai  Y. H., Kou  C. X.: A Barzilai-Borwein conjugate gradient method. Sci. China. Math. \textbf{59(8)}, 1511-1524(2016).
						
						\bibitem{Liu2019An} Liu H. W., Liu  Z. X.: An efficient Barzilai-Borwein conjugate gradient method for unconstrained optimization. J. Optim. Theory Appl. \textbf{180(3)}, 879-906(2019).
						
						\bibitem{Li2019A} Li  Y. F., Liu  Z. X., Liu  H. W.: A subspace minimization conjugate gradient method based on conic model for unconstrained optimization. Comput. Appl. Math. \textbf{38(1)}, 16(2019).
						
	\bibitem{Wang2020A} Wang T.,  Liu Z. X.,  Liu, H. W.:
A new subspace minimization conjugate gradient method based on tensor model for unconstrained optimization. Int.  J. Comput.  Math. \textbf{96(10)}, 1924-1942(2019).						
						
						\bibitem{Zhao2020New}Zhao  T., Liu  H. W., Liu  Z. X.: New subspace minimization conjugate gradient methods based on regularization model for unconstrained optimization. Numer. Algorithm \textbf{87(4)}, 1501-1534(2021).
						
  \bibitem{Sun2021A}Sun W. M.,   Liu H. W.,   Liu Z. X.: A class of accelerated subspace minimization conjugate gradient methods. J. Optim. Theory Appl. \textbf{190(3)}, 811-840(2021).						
						
						\bibitem{Li2018A} Li  M., Liu  H. W., Liu  Z. X.: A new subspace minimization conjugate gradient method with nonmonotone line search for unconstrained optimization.  Numer.  Algorithms \textbf{79(1)}, 195- 219(2018).
						
						\bibitem{Zhang2019A} Zhang  K. K., Liu  H. W., Liu  Z X.: A new adaptive subspace minimization three-term conjugate gradient algorithm for unconstrained optimization.  J. Comput. Math.  \textbf{39(2)},  159-177(2021).
						
						\bibitem{Gould2001CUTEr}  Gould N. I., Orban D.,  Toint P. L.: CUTEst: a constrained and unconstrained testing
						environment with safe threads for mathematical optimization, Comp. Optim. Appl.  \textbf{60}, 545-557(2015).
						
	\bibitem{DaiLiao2001}Dai Y. H., Liao L.  Z.: New conjugacy conditions and related nonlinear conjugate gradient methods. Appl. Math. Optim. \textbf{43}, 87-101(2001)						
						
						\bibitem{Powellexample1984}Powell M. J. D.: Nonconvex minimization calculations and the conjugate gradient method, 	in Numerical Analysis, Lecture Notes in Math. 1066, D. F. Griffiths, ed., Springer, Berlin,
						122-141(1984).
						
						\bibitem{Powell1977Restart}Powell M. J. D.: Restart procedures of the conjugate gradient method. Math. Program. \textbf{2}, 241-254(1977).
					
						\bibitem{Yuan1991A}  Yuan Y. X.: A modified BFGS algorithm for unconstrained optimization. IMA J. Numer. Anal.  \textbf{11(3)}, 325-332(1991).
						
						\bibitem{Dai2002Modified}Dai  Y. H., Yuan  J. Y., Yuan  Y. X.: Modified two-point stepsize gradient methods for unconstrained optimization problems. Comput. Optim. Appl.  \textbf{22(1)}, 103-109(2002).
						
						
						\bibitem{Liu2017AnNA}Liu Z. X., Liu H. W.: An efficient gradient method with approximate optimal stepsize for large-scale unconstrained optimization. Numer. Algorithms \textbf{78(1)}, 21-39(2018).
						
						\bibitem{LiuDaiLiuCGOPT20}	 Liu Z. X., Liu H. W., Dai Y. H.: An improved Dai-Kou conjugate gradient algorithm for unconstrained optimization. Comput. Optim. Appl. \textbf{75(1)}, 145-167(2020).
						
						\bibitem{BBQ-HDL}Huang Y. K.,  Dai Y. H., Liu X. W.:  Equipping Barzilai-Borwein method with two dimensional quadratic termination property.  SIAM J. Optim.  \textbf{31(4)}, 3068-3096(2021).
						
						\bibitem{Dolan2002Benchmarking}Dolan E. D., More, J. J.: Benchmarking optimization software with performance profiles. Math. Program. \textbf{91(2)}, 201-213(2002).
						
				
						%
						%
						%
						%
						%
						%
						%
						%
						%
						%
						%
						%
						%
						%
						%
						%
						%
						%
					
					\end{thebibliography}
				\end{document}